\DeclareFontFamily{U} {MnSymbolA}{}
\DeclareFontShape{U}{MnSymbolA}{m}{n}{
  <-6> MnSymbolA5
  <6-7> MnSymbolA6
  <7-8> MnSymbolA7
  <8-9> MnSymbolA8
  <9-10> MnSymbolA9
  <10-12> MnSymbolA10
  <12-> MnSymbolA12}{}
\DeclareFontShape{U}{MnSymbolA}{b}{n}{
  <-6> MnSymbolA-Bold5
  <6-7> MnSymbolA-Bold6
  <7-8> MnSymbolA-Bold7
  <8-9> MnSymbolA-Bold8
  <9-10> MnSymbolA-Bold9
  <10-12> MnSymbolA-Bold10
  <12-> MnSymbolA-Bold12}{}
\DeclareSymbolFont{MnSyA} {U} {MnSymbolA}{m}{n}
\DeclareMathSymbol{\mnrightarrow}{\mathrel}{MnSyA}{0}
\DeclareMathSymbol{\mndownarrow}{\mathrel}{MnSyA}{3}
\DeclareMathSymbol{\mnleftarrow}{\mathrel}{MnSyA}{2}
\DeclareMathSymbol{\mnuparrow}{\mathrel}{MnSyA}{1}
\DeclareMathSymbol{\mnnearrow}{\mathrel}{MnSyA}{4}
\DeclareMathSymbol{\mnnwarrow}{\mathrel}{MnSyA}{5}
\DeclareMathSymbol{\mnswarrow}{\mathrel}{MnSyA}{6}
\DeclareMathSymbol{\mnsearrow}{\mathrel}{MnSyA}{7}
\numberwithin{equation}{section}
\newtheorem{prop}{Proposition}[section]
\newtheorem{theo}[prop]{Theorem}
\newtheorem{lem}{Lemma}[prop]
\newtheorem{cor}[prop]{Corollary}
\theoremstyle{definition}
\newtheorem{rem}[prop]{Remark}
\newtheorem{defin}[prop]{Definition}
\newcommand\restr[2]{{
  \left.\kern-\nulldelimiterspace 
  #1 
  \vphantom{\big|} 
  \right|_{#2} 
 }}
 \colorlet{lgray}{white!85!black}
\colorlet{lred}{white!85!red}
\colorlet{lgreen}{white!60!green}
\colorlet{dgreen}{black!30!green}
\colorlet{lpurple}{white!60!purple}
\colorlet{lblue}{white!60!blue}
\definecolor{green}{rgb}{0.1,0.8,0.1}
\definecolor{yellow}{rgb}{1.0,0.85,0.25}
\definecolor{purple}{rgb}{1.0, 0, 1.0}
\definecolor{blue}{rgb}{0, 0, 1.0}
\tikzstyle{unfused}=[lgray, line width=1.5pt, ->]
\tikzstyle{fused}=[lgray, line width=4pt, ->]
\tikzstyle{dual}=[black, line width=1pt, dashed]
\tikzstyle{lightdual}=[black, line width=0.5pt, dashed]
\tikzstyle{cut}=[black, line width=1.0pt]
 \renewcommand{\tikz}[2]{
\begin{tikzpicture}[scale=#1,baseline=(current bounding box.center),>=stealth]
#2
\end{tikzpicture}}
\newcommand{\tikzbase}[3]{
\begin{tikzpicture}[scale=#1,baseline={([yshift=#2]current bounding box.center)},>=stealth]
#3
\end{tikzpicture}}
\renewcommand{\vert}[4]{
\tikz{0.5}{
	\draw[lgray,line width=1pt,->] (-1,0) -- (1,0);
	\draw[lgray,line width=1pt,->] (0,-1) -- (0,1);
	\node[left] at (-1,0) {\tiny $#2$};\node[right] at (1,0) {\tiny $#4$};
	\node[below] at (0,-1) {\tiny $#1$};\node[above] at (0,1) {\tiny $#3$};
}}
\def \be{\begin{equation*}}
\def \ee{\end{equation*}}
\def\({\left(}
\def\){\right)}
\def\[{\left[}
\def\]{\right]}
\DeclareMathOperator{\res}{res}
\DeclareMathOperator{\Comp}{Cmp}
\DeclareMathOperator{\inv}{inv}
\DeclareMathOperator{\tinv}{\widetilde{inv}}
\DeclareMathOperator{\Row}{Row}
\DeclareMathOperator{\Col}{Col}
\def \bA{\bm A}
\def \bB{\bm B}
\def \bC{\bm C}
\def \bD{\bm D}
\def \bI{\bm I}
\def \bJ{\bm J}
\def \bK{\bm K}
\def \bP{\bm P}
\def \NN{{\sf N}}
\def \MM{{\sf M}}
\def \LL{{\sf L}}
\def \Z{\mathbb{Z}}
\def \dZ{\mathbb{Z}+\frac12}
\def \ddZ{\(\dZ\)^2}
\def \A{\mathcal{A}}
\def \B{\mathcal{B}}
\def \C{\mathcal{C}}
\def \M{\mathcal{M}}
\def \R{\mathcal R}
\def \z{\zeta}
\def \p{\mathfrak p}
\def \q{\mathfrak q}
\def \m{\mathfrak m}
\def \bu{\mathbf{u}}
\def \bz{\mathbf{z}}
\def \bx{\mathbf{x}}
\def \by{\mathbf{y}}
\def \bw{\mathbf{w}}
\def \bi{\bm i}
\def \bj{\bm j}
\def \bp{\mathbf p}
\def \bc{\mathbf c}
\def \1{\mathbbm 1}
\def \x{\mathbf x}
\def \y{\mathbf y}
\def \e{\mathbf e}
\def \c{\mathbf{c}}
\def \i{\mathbf{i}}
\def \H{\mathcal{H}}
\def \E{\mathbb E}
\def \C{\mathcal C}
\def \O{\mathcal O}
\title[Observables and local relation]
{Observables of stochastic colored vertex models and local relation}
\author{Alexey Bufetov}
\address[Alexey Bufetov]{Hausdorff Center for Mathematics \& Institute for Applied Mathematics, University of Bonn, Germany. E-mail: alexey.bufetov@gmail.com}
\author{Sergei Korotkikh}
\address[Sergei Korotkikh]{Massachusetts Institute of Technology, Cambridge, U.S.A, E-mail:shortkih@gmail.com}
\begin{document}

\begin{abstract}
We study the stochastic colored six vertex (SC6V) model and its fusion. Our main result is an integral expression for natural observables of this model --- joint q-moments of height functions. This generalises a recent result of Borodin-Wheeler.  The key technical ingredient is a new relation of height functions of SC6V model in neighboring points. This relation is of independent interest; we refer to it as a local relation. As applications, we give a new proof of certain symmetries of height functions of SC6V model recently established by Borodin-Gorin-Wheeler and Galashin, and new formulas for joint moments of delayed partition functions of Beta polymer. 
\end{abstract}

\maketitle

\tableofcontents

\section{Introduction}

\subsection{Summary} In the last twenty years there was a lot of progress in understanding large scale, long time asymptotics of nonequilibrium stochastic particle systems belonging to the Kardar-Parisi-Zhang (KPZ) universality class. Much of this progress has been obtained by discovering exact formulas for certain \textit{observables} of these systems, and subsequently analysing their asymptotic behavior. In particular, in pioneering works of Tracy-Widom integral expressions for q-moments of the height function of asymmetric simple exclusion process (ASEP) were established (\cite{TW08a}, \cite{TW08b}) and then used (\cite{TW09}) for studying the asymptotic fluctuations of the height function. A prominent role in the KPZ universality class is played by the stochastic six vertex (S6V) model, since it can be degenerated into a number of systems in the class, including ASEP, q-TASEP, and some polymer models. The S6V model was first introduced by Gwa-Spohn in \cite{GS92} and its asymptotics were analyzed via exact formulas by Borodin-Corwin-Gorin in \cite{BCG16}. Numerous related works on obtaining q-moments of the height functions of various probabilistic systems were published, see e.g. Borodin-Corwin-Petrov-Sasamoto \cite{BCPS15b},\cite{BCPS15a}, Corwin-Petrov \cite{CP16}, Bufetov-Matveev \cite{BM18}, and references therein.

More recently, stochastic \textit{colored} vertex models were introduced and studied by Kuniba-Mangazeev-Maruyama-Okado \cite{KMMO16}, see also Borodin-Wheeler \cite{BW18}, Bosnjak-Mangazeev \cite{BM16}. On the language of interacting particle systems, they correspond to multi-species (or multi-type) particle systems. A natural goal is to find reasonable formulas for observables of such systems.

The main motivation for our current work is a recent paper of Borodin-Wheeler \cite{BW20}, in which integral expressions for certain observables of stochastic colored six vertex model (SC6V) were obtained. Our main result is Theorem \ref{mainTheorem} (and its fused version Theorem \ref{fusedMainTheorem}) which generalises the formula of Borodin-Wheeler in the following directions:
\begin{itemize}
\item Observables in our formula have a simple and standard form: They are q-moments of height functions. Observables in the Borodin-Wheeler formula are fairly complicated linear combinations of such q-moments. We demonstrate in Section \ref{sec:shiftedQmoments} how to derive the Borodin-Wheeler formula from ours.

\item Our formula is applicable to more general domains.

\item Our formula is applicable in the case of inhomogeneous spectral parameters of the model. 
\end{itemize}

Our method for obtaining the formula for observables is completely new and is of independent interest. It is based on a certain non-trivial relation between q-moments of height function in neighboring points which we prove in Proposition \ref{pLocalRelation}. We refer to it as a \textit{local relation}. This relation is a significant generalisation of the so called four point relation which played a key role in \cite{BG18}, see Remark \ref{rem:historyLocRel} for more detail. The main idea behind the local relation is that it allows to track the change in q-moments of height functions after one more vertex is added into the domain in which the SC6V model is considered. Crucially, the q-moments in the updated domain depend only on q-moments before the update. The existence of such a relation is non-trivial and emphasises a special role played by q-moments of height functions for ASEP/ S6V-type models. With the use of the local relation we are able to prove the formulas for q-moments of height functions via the induction on the size of the domain.

We give two applications of our formula for observables. First, we give an independent proof (see Theorem \ref{shiftInv} and its proof in Section \ref{shiftSec}) of some of the recent results of Galashin \cite{Gal20}, who established a conjecture from \cite{BGW19} about certain symmetries in joint distributions of height functions of SC6V model (this result can be degenerated into symmetries of a number of models, including directed polymers and the Airy sheet, see also \cite{D20} for related results). We are able to do this because q-moments of height functions completely determine their distribution, so it is sufficient to check the equivalence of formulas for the q-moments of these functions (this is not an easy check, though). Second, we establish similar formulas for observables for \textit{fused} vertex models (Theorems \ref{fusedMainTheorem} and \ref{th:qHahnFormulae}) and degenerate them to formulas for moments of delayed partition functions in Beta polymer (Theorem \ref{th:BetaPol}).

There is a significant difference between the approach of Borodin-Wheeler (\cite{BW18}, \cite{BW20}) to the SC6V model and the approach used recently in \cite{Buf20}, \cite{Gal20}, and in the current paper. The approach of Borodin-Wheeler is based on a family of nonsymmetric functions defined as partition functions of certain configurations of the SC6V model, and a subsequent study of the properties of these functions. The second approach seems to be more direct and is based on an interpretation of the SC6V model as a random walk on the Hecke algebra. One advantage of the second approach is that it immediately implies the so called color-position symmetry in the SC6V model (see \cite{Buf20}, \cite{Gal20}, and \cite{K20}), which is useful for 
asymptotic applications (see e.g. \cite{AAV08}, \cite{BB19}). In the current paper we also use the second approach, which arguably leads to more direct proofs. 

\subsection{Statement of result} Let us formulate a version of our main result. We will present an integral expression for q-moments of height functions of the higher spin version of the SC6V model.

The higher spin SC6V model can be defined as a random collection of up-right paths in the quadrant $\Z_{\ge 1} \times \Z_{\ge 1}$; each of paths has color (labeled by an integer) associated with it. Vertical edges of the quadrant can contain arbitrary number of paths, while no horizontal edge can contain more than one path. At the boundary of the quadrant, we set the following conditions. Let us fix a sequence of integers $0=l_0 \le l_1 \le l_2 \le \dots$. Incoming horizontal edges at rows $l_{c-1}+1, l_{c-1}+2, \dots, l_c$ have color $c$, for all $c=1,2,\dots$. No paths enter the quadrant through the vertical edges at the bottom of the quadrant.

\begin{figure}
	\centering
	\begin{tikzpicture}[scale=1.3,baseline={([yshift=-15]current bounding box.center)},>={Stealth [scale=1.2]}]]
	\foreach\x in {1,...,3}{
		\draw[line width=0.7pt,dotted] (\x,0) -- (\x,5.9);
		\node[below, text=black] at (\x,-0.2) {\x};
	}
	\foreach \y in {1,...,5}{
		\draw[line width=0.7pt,dotted] (0,\y) -- (3.9,\y);
		\node[left, text=black] at (-0.2,\y) {\y};
	}
	
	\draw[white, line width = 2] (3,3) -- (3,4);
	\draw[white, line width = 2] (1,5) -- (1,6);
	
	\draw[red, line width = 1.5pt] (0,1) -- (3,1) -- (3,3);
	\node[above, text=red] at (0.5,1) {1};
	\draw[red, line width = 1.5pt] (2.93,3) -- (2.93,4);
	\draw[red, line width = 1.5pt, ->] (3,4) -- (3,5) -- (4,5);
	
	\draw[red, line width = 1.5pt, ->] (0,2) -- (2,2) -- (2,6);
	\node[above, text=red] at (0.5,2) {1};

	\draw[blue, line width = 1.5pt] (0,3) -- (3,3);
	\node[above, text=blue] at (0.5,3) {2};
	\draw[blue, line width = 1.5pt] (3.07,3) -- (3.07,4);
	\draw[blue, line width = 1.5pt, ->] (3,4) -- (4,4);

	\draw[blue, line width = 1.5pt] (0,4) -- (1,4) -- (1,5);
	\node[above, text=blue] at (0.5,4) {2};
	\draw[blue, line width = 1.5pt, ->] (0.93,5) -- (0.93,6);	
	
	\draw[green, line width = 1.5pt] (0,5) -- (1,5);
	\node[above, text=green] at (0.5,5) {3};
	\draw[green, line width = 1.5pt, ->] (1.07,5) -- (1.07,6);	
	
\end{tikzpicture}
	\caption{A possible configuration of the higher spin SC6V model (best viewed in color). For such a configuration, one has $h_{>0}^{(3/2,1/2)}=0$, $h_{>1}^{(5/2,7/2)}=1$,               	$h_{>0}^{(3/2,9/2)}=3$. }
	\label{Fig-intro}
\end{figure}

The higher spin SC6V model involves the main quantization parameter $q\in [0;1]$ and three sequences of real valued parameters. A sequence $(u_1,u_2,u_3 \dots)$ is referred to as rapidities of rows, a sequence $(y_1,y_2,y_3 \dots)$ is referred to as rapidities of columns, and a sequence $(s_1,s_2,s_3 \dots)$ is referred to as spin parameters (they are also associated with columns). The random colored up-right paths propagate according to the following Markov recipe, which is applied to points $(\mathbf{x},\mathbf{y}) \in \Z_{\ge 1} \times \Z_{\ge 1}$ with $\mathbf{x}+\mathbf{y}=2,3,4, \dots$ step by step. At each step, the filling of the left and bottom edges in vertices along the diagonal $\mathbf{x}+\mathbf{y}=const$ is already determined either by boundary conditions or by previous steps. Let us encode this data in the following way. At the left horizontal edge, we can have either a path of color $i \ge 1$, or no path, which we encode by $0$. At the bottom vertical edge, we have a collection of colored paths which we encode by a vector $\bI = (I_1, I_2, I_3,\dots)$, where $I_k$ is the number of paths of color $k$ present in the edge. 
The filling of up and right edges is defined with the use of \textit{vertex weights}. They are given by the table \eqref{LweightsIntro}, where $\mathbf{e}^i$ is a standard basis vector with 1 as its $i$th coordinate and all other its coordinates are equal to 0, $I_{[a;b]} := \sum_{j=a}^b I_j$, and all unlisted configurations have weight 0.   
The vertex weights are used as probabilities that paths incoming from left and below will go straight or will make a turn in a vertex (indeed, note the sum of all vertex weights with any fixed left and bottom filling is equal to 1), eventually forming a vertex of one of the types listed in the table \eqref{LweightsIntro}. At vertex $(\mathbf{x},\mathbf{y})$ we use the parameters $u=u_{\mathbf{x}}$, $y=y_{\mathbf{y}}$, and $s=s_{\mathbf{y}}$ when using the vertex weights from the table. The random choices at different vertices are jointly independent. 

\begin{align}
\label{LweightsIntro}
\begin{tabular}{|c|c|c|}
\hline
\quad
\tikz{0.7}{
	\draw[lgray,line width=1.5pt,->] (-1,0) -- (1,0);
	\draw[lgray,line width=4pt,->] (0,-1) -- (0,1);
	\node[left] at (-1,0) {\tiny $0$};\node[right] at (1,0) {\tiny $0$};
	\node[below] at (0,-1) {\tiny $\bI$};\node[above] at (0,1) {\tiny $\bI$};
}
\quad
&
\quad
\tikz{0.7}{
	\draw[lgray,line width=1.5pt,->] (-1,0) -- (1,0);
	\draw[lgray,line width=4pt,->] (0,-1) -- (0,1);
	\node[left] at (-1,0) {\tiny $i$};\node[right] at (1,0) {\tiny $i$};
	\node[below] at (0,-1) {\tiny $\bI$};\node[above] at (0,1) {\tiny $\bI$};
}
\quad
&
\quad
\tikz{0.7}{
	\draw[lgray,line width=1.5pt,->] (-1,0) -- (1,0);
	\draw[lgray,line width=4pt,->] (0,-1) -- (0,1);
	\node[left] at (-1,0) {\tiny $0$};\node[right] at (1,0) {\tiny $i$};
	\node[below] at (0,-1) {\tiny $\bI$};\node[above] at (0,1) {\tiny $\bI-\e^i$};
}
\quad
\\[1.3cm]
\quad
$\dfrac{1-s u y^{-1} q^{I_{[1;n]}}}{1-s u y^{-1}}$
\quad
&
\quad
$\dfrac{(s^2q^{I_i}-s u y^{-1} ) q^{I_{[i+1;n]}}}{1-s u y^{-1}}$
\quad
&
\quad
$\dfrac{s u y^{-1} (q^{I_i}-1) q^{I_{[i+1;n]}}}{1-su y^{-1}}$
\quad
\\[0.7cm]
\hline
\quad
\tikz{0.7}{
	\draw[lgray,line width=1.5pt,->] (-1,0) -- (1,0);
	\draw[lgray,line width=4pt,->] (0,-1) -- (0,1);
	\node[left] at (-1,0) {\tiny $i$};\node[right] at (1,0) {\tiny $0$};
	\node[below] at (0,-1) {\tiny $\bI$};\node[above] at (0,1) {\tiny $\bI+\e^i$};
}
\quad
&
\quad
\tikz{0.7}{
	\draw[lgray,line width=1.5pt,->] (-1,0) -- (1,0);
	\draw[lgray,line width=4pt,->] (0,-1) -- (0,1);
	\node[left] at (-1,0) {\tiny $i$};\node[right] at (1,0) {\tiny $j$};
	\node[below] at (0,-1) {\tiny $\bI$};\node[above] at (0,1)
	{\tiny $\bI+\e^i-\e^j$};
}
\quad
&
\quad
\tikz{0.7}{
	\draw[lgray,line width=1.5pt,->] (-1,0) -- (1,0);
	\draw[lgray,line width=4pt,->] (0,-1) -- (0,1);
	\node[left] at (-1,0) {\tiny $j$};\node[right] at (1,0) {\tiny $i$};
	\node[below] at (0,-1) {\tiny $\bI$};\node[above] at (0,1) {\tiny $\bI+\e^j-\e^i$};
}
\quad
\\[1.3cm]
\quad
$\dfrac{1-s^2 q^{I_{[1;n]}}}{1-su y^{-1}}$
\quad
&
\quad
$\dfrac{s u y^{-1} (q^{I_j}-1) q^{I_{[j+1;n]}}}{1-su y^{-1}}$
\quad
&
\quad
$\dfrac{s^2(q^{I_i}-1)q^{I_{[i+1;n]}}}{1-su y^{-1}}$
\quad
\\[0.7cm]
\hline
\end{tabular}
\end{align}

This step-by-step procedure produces a random collection of up-right colored paths in $\Z_{\ge 1} \times \Z_{\ge 1}$. A quantitative way to describe it uses \textit{height functions}. By definition, for $(\alpha, \beta) \in \left( \Z_{\ge 0} + \frac{1}{2} \right) \times \left( \Z_{\ge 0}  + \frac{1}{2} \right)$ and $c \in \Z_{\ge 0}$ the height function $h_{>c}^{(\alpha,\beta)}$ is the total number of paths of color $>c$ which pass directly below the point $(\alpha,\beta)$. In other words, $h_{>c}^{(\alpha,\beta)}$ is the total number of edges connecting $(\alpha-\frac12, j)$ and $(\alpha+\frac12, j)$, $j=1,2,\dots,\beta-\frac12$, which contain a path of color $>c$. See Figure \ref{Fig-intro} for an example.

In order to formulate our result, we also need to recall the Demazure-Lustig representation of the Hecke algebra. Let $t_{i}$ be an operator which acts on the space of rational functions in $w_1, w_2, \dots, w_k$ by swapping $w_i$ and $w_{i+1}$. Define
$$
T_i :=q+ \frac{w_{i+1} - q w_i}{w_{i+1}-w_i} \left( t_i -1 \right). 
$$
Let $\pi \in S_k$ be a permutation of $\{1,2,\dots,k\}$, let $\sigma_i$ be a transposition of $i$ and $i+1$, and let $\pi = \sigma_{i_1} \sigma_{i_2} \dots \sigma_{i_{l(\pi)}}$ be a shortest decomposition of $\pi$ into a product of nearest neighbor transpositions. Define
$$
T_{\pi} := T_{i_1} T_{i_2} \dots T_{i_{l(\pi)}}.
$$
It is well-known that the construction produces a representation of the Hecke algebra, in particular, the operator $T_{\pi}$ does not depend on the choice of a shortest decomposition of $\pi$. 

The following theorem is Theorem \ref{fusedMainTheorem}.

\begin{theo}
\label{fusedMainTheoremIntro}
Assume that the row rapidities $u_i$ satisfy $u_i\neq qu_j$ for all $i,j$. Then for any $k$-tuples $(\alpha_1, \dots, \alpha_k), (\beta_1, \dots, \beta_k), (c_1, \dots, c_k)$ satisfying
\be
0<\alpha_1\leq \alpha_2\leq \dots\leq \alpha_k,\quad \beta_1\geq \beta_2\geq \dots\geq \beta_k>0,\qquad   \alpha_i,\beta_i\in\Z_{\ge 0}+\frac12,
\ee
\be
0\leq c_1\leq c_2\leq \dots\leq c_k\qquad c_i\in \Z_{\ge 0}
\ee
and any permutation $\pi\in S_k$ we have
\begin{multline}
\label{qmomentseqIntro}
\mathbb E\left(q^{h_{>c_1}^{\(\alpha_{\pi(1)},\beta_{\pi(1)}\)}+h_{>c_2}^{\(\alpha_{\pi(2)},\beta_{\pi(2)}\)}+\dots+h_{>c_k}^{\(\alpha_{\pi(k)},\beta_{\pi(k)}\)}}\right) = q^{\frac{k(k-1)}{2}-l(\pi)} \int_{\Gamma[1|\bu^{-1}]}\cdots\int_{\Gamma[k|\bu^{-1}]}\prod_{a<b}\frac{w_b-w_a}{w_b-qw_a}\\
T_\pi\left( \prod_{a=1}^k\prod_{i=1}^{l_{c_a}}\frac{1-u_iw_a}{1-qu_iw_a}\right)\prod_{a=1}^k\left(\prod_{i=1}^{i<\beta_a}\frac{1-qu_iw_a}{1-u_iw_a}\prod_{j=1}^{j<\alpha_a}\frac{s_j(w_a s_j-y^{-1}_j)}{w_a-s_jy^{-1}_j}\frac{dw_a}{2\pi i w_a}\right).
\end{multline}
where the integral with respect to $w_a$ is taken over the contour $\Gamma[a|\bu^{-1}]$ and the contours $\Gamma[a|\bu^{-1}]$ are $q$-nested around $0$, encircle $\{u_i^{-1}\}_i$ and encircle no other singularities of the integrand, that is, $\{s_jy_j^{-1}\}_j$ and $\{q^{-1}u_i^{-1}\}_i$.
\end{theo}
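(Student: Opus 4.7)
I would prove \eqref{qmomentseqIntro} by induction, using the local relation of Proposition~\ref{pLocalRelation} as the inductive engine. The point is that both sides of \eqref{qmomentseqIntro} should transform in parallel when one enlarges the underlying configuration by a single vertex -- equivalently, when a single coordinate $\alpha_a$ or $\beta_a$ is incremented by $1$, possibly with an adjacent transposition applied to $\pi$ to restore the monotonicity constraints. The Hecke-algebra structure built into $T_\pi$ is precisely what makes the probabilistic transformation (given by the local relation) and the algebraic transformation of the integrand match.

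\textbf{Base case and inductive step.} The base case is the degenerate situation in which every height function is deterministic: for instance, all $\beta_a=1/2$, in which case $h_{>c_a}^{(\alpha_a,1/2)}=0$ and the left-hand side equals~$1$; or all $\alpha_a=1/2$, in which case the heights are entirely prescribed by the boundary colouring. In these cases the right-hand side reduces to an explicit residue computation at $w_a=0$ (and possibly at a handful of $u_i^{-1}$) that one checks by hand. For the inductive step, assume the formula holds for a smaller tuple and increment one of the coordinates by $1$. On the left, the local relation expresses the new joint $q$-moment as a specific linear combination of joint $q$-moments on the smaller domain, with neighbouring labels possibly exchanged. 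On the right, incrementing $\beta_a\mapsto\beta_a+1$ multiplies the integrand by $\frac{1-qu_{\beta_a+1/2}w_a}{1-u_{\beta_a+1/2}w_a}$; incrementing $\alpha_a\mapsto\alpha_a+1$ multiplies it by $\frac{s_{\alpha_a+1/2}(w_as_{\alpha_a+1/2}-y^{-1}_{\alpha_a+1/2})}{w_a-s_{\alpha_a+1/2}y^{-1}_{\alpha_a+1/2}}$; any forced swap of adjacent labels is absorbed into $\pi\mapsto\pi\sigma_i$ and hence into an application of $T_i$ on the integrand. Using the Hecke relations $T_iT_{i+1}T_i=T_{i+1}T_iT_{i+1}$ and $(T_i-q)(T_i+1)=0$, one then matches the two transformations term by term.

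\textbf{The hard part and fusion.} The main technical difficulty is verifying that the one-step transformation of the integral really realises the local relation under the prescribed contour conventions. Incrementing $\beta_a$ creates a new singularity at $w_a=q^{-1}u_{\beta_a+1/2}^{-1}$, which must remain outside $\Gamma[a|\bu^{-1}]$, while the existing pole at $w_a=u_{\beta_a+1/2}^{-1}$ becomes available and contributes a new residue that must match a specific coefficient of the local relation; similarly for $\alpha_a$. When $\pi$ changes by $\sigma_i$, one must re-order two neighbouring contours while preserving the $q$-nesting, and this re-ordering is exactly the content of $T_i$ acting on the integrand, including the simple pole at $w_{i+1}=qw_i$ absorbed by the prefactor $\prod_{a<b}\frac{w_b-w_a}{w_b-qw_a}$. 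Once this identity is established in the rank-one (unfused) setting, the fused / higher-spin statement follows by the standard fusion procedure: specialising a bunch of neighbouring row rapidities to a geometric progression $u,qu,q^2u,\dots,q^{J-1}u$, grouping these rows together, and using analyticity in the spin parameter~$s$ to extend the resulting identity from integer~$J$ to arbitrary~$s$, leaving the integrand in the exact form written in \eqref{qmomentseqIntro}.
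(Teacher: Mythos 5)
Your high-level skeleton (induction via the local relation for the unfused SC6V model, followed by fusion to higher spin) is indeed the paper's strategy, so the overall architecture is right. However, there are two substantive gaps in how you describe the step, and one slip in the fusion.

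First, your description of the inductive step misidentifies where the work lies. You frame the step as a contour/residue problem: "the existing pole at $w_a=u_{\beta_a+1/2}^{-1}$ becomes available and contributes a new residue that must match a specific coefficient of the local relation." In the paper's proof of Theorem~\ref{mainTheorem} the contours are fixed once and for all; no residue analysis appears in the inductive step. The entire residue machinery (Lemmas~\ref{computation-inside} and~\ref{computation-outside}) lives in the base case, Proposition~\ref{base}, which is itself a nontrivial iterated computation rather than something one ``checks by hand.'' The step is purely algebraic: one writes the formula as a pairing $\langle T_\pi\Phi,\Delta^{(\A,\B)}\rangle$, invokes the local relation (in the form of Proposition~\ref{GlobalRelationProp}, which produces a sum of $2t$ terms labelled by the $t$ points sitting at the removed corner), moves the cyclic $T$-operators across the pairing via the self-adjointness Proposition~\ref{selfadjoint}, and then resums the $t$ contributions using the specific Demazure--Lusztig identity of Lemma~\ref{T-comp}, namely $\frac{q-\lambda q^t}{q-\lambda}+\sum_{i=0}^{t-1}T_{\sigma^-_{[1,i+1]}}\frac{(q-1)(\lambda-q\mu w_1)}{(q-\lambda)(1-\mu w_1)}=\prod_{i=1}^t\frac{1-q\mu w_i}{1-\mu w_i}$. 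This telescoping identity is the crux of the argument; it is a statement about the explicit representation, not a formal consequence of the braid and quadratic Hecke relations, so the phrase ``one then matches the two transformations term by term'' using $T_iT_{i+1}T_i=T_{i+1}T_iT_{i+1}$ and $(T_i-q)(T_i+1)=0$ is not enough as stated.

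Second, the fusion is in the wrong direction: in Theorem~\ref{fusedMainTheorem} the rows stay unfused (their rapidities $u_i$ appear individually in the integrand) and it is the \emph{columns} that are fused, by replacing a column of rapidity $y_j$ with $J$ columns at geometric rapidities and analytically continuing $q^J\mapsto s_j^{-2}$. This is also where the analyticity argument has to be handled with care (one first removes the $y_i\neq q y_j$ constraint by observing the integrand has no poles at $y_j^{-1}$ after the $T_\pi$-factor is split off, then continues in $q^J$). These are details a complete proof would need to supply.
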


One possible choice of contours is constructed as follows. Let $\Gamma[\bu^{-1}]$ be a small contour encircling all points $\{u_i^{-1}\}_i$, with all points $0, \{q^{-1}u^{-1}_i\}_i, \{s_jy_j^{-1}\}_j$ being outside of $\Gamma[\bu^{-1}]$. Let $c_0$ be a small circle around $0$ such that all points  $\{u_i^{-1}\}_i, \{q^{-1}u^{-1}_i\}_i, \{s_jy_j^{-1}\}_j$ are outside of $c_0$. Define $\Gamma[k|\bu^{-1}]$ as a union of $\Gamma[\bu^{-1}]$ and $q^{2k}c_0$.

Theorem \ref{fusedMainTheoremIntro} is related to \cite[Theorem 6.1]{BW20} and improves upon it in the following ways. First, and perhaps most significantly, the left-hand side contains q-moments of height functions rather than their fairly complicated linear combinations. Second, \cite[Theorem 6.1]{BW20} addresses only the case when all $\beta_i$'s are equal to each other. Distinct $\beta_i$'s are allowed in our setting, moreover, we are analyzing more general domains than formulated above, see Theorem \ref{mainTheorem} for details. Third, \cite[Theorem 6.1]{BW20} is applicable only to the case when all $s_j$'s are equal to each other and $y_j$'s are equal to each other (in our notations). Thus, Theorem \ref{fusedMainTheoremIntro} allows for more inhomogeneities in the model.

\subsection{Outline} In Section 2 we recall the necessary definitions about SC6V model, largely following \cite{BW18}. In Section 3 we recall the necessary notions about Hecke algebra and establish some auxiliary properties related to it. In Section 4 we prove auxiliary facts about integral expressions that appear in formulas for q-moments. In Section 5 we establish a key new technical ingredient --- the local relation for q-moments of height functions of SC6V model. In Section 6 we prove our main result (Theorem \ref{mainTheorem}). In Section 7 we give a new proof of the shift invariance property for the SC6V model. In Section 8 we extend our main result to \textit{fused} vertex models and give an independent proof for the Borodin-Wheeler formula for observables. Finally, in Section 9 we obtain a formula for moments of delayed partition functions of the Beta polymer.

\textbf{Acknowledgements}. We are grateful to A. Borodin for many very helpful discussions. The work of A.~Bufetov was partially supported by the Deutsche Forschungsgemeinschaft (DFG, German Research Foundation) under Germany's Excellence Strategy -- EXC 2047 ``Hausdorff Center for Mathematics''. S.~Korotkikh was partially supported by the NSF FRG grant DMS-1664619.

\section{Stochastic colored vertex models}

In this section we summarize the terminology of vertex models and previously proved results which will be used in this work. We largely follow the exposition from \cite{BGW19}, with slight modifications.

In this work we will mostly treat the vertex models as a way to define a random collection of colored up-right paths on a finite subdomain of the square lattice $\Z\times\Z$. Colors of paths are always labeled by positive natural numbers, and, depending on the model, each lattice edge either may be occupied by an arbitrary number paths or have a restriction on number of paths along it.

Each vertex in the model produces a weight, depending solely on the colors of paths occupying adjacent lattice edges. The probability of a configuration of paths is given by the product of the weights of all vertices of the model. The choice of permitted configurations and weights of vertices will guarantee that all such products of weights will sum to $1$.

\subsection{Stochastic colored 6-vertex weights.} 

We start with a vertex model with no more than one path occupying any given lattice edge. In this case we can encode the collection of paths by an assignment of labels to the lattice edges: Assume that all paths have colors labeled by $\{1, \dots, n\}$. Then we assign $0$ to a lattice edge if there is no path along the edge and assign $i$ if the edge is occupied by a path of color $i$.\footnote{Throughout this work we will often treat $0$ as an additional color denoting the absence of a path.} We call such labelings \emph{configurations}.

Following the labeling notation above, the weights of vertices of the \emph{stochastic colored six-vertex model} are denoted by
\be
R_z(i,j;k,l)=\tikzbase{0.6}{-0.56ex}{
	\draw[lgray,line width=1.5pt,->] (-1,0) -- (1,0);
	\draw[lgray,line width=1.5pt,->] (0,-1) -- (0,1);
	\node[left] at (-1,0) {\tiny $j$};\node[right] at (1,0) {\tiny $l$};
	\node[below] at (0,-1) {\tiny $i$};\node[above] at (0,1) {\tiny $k$};
},\qquad i,j,k,l\in\{0, \dots, n\}
\ee
and are given in the following table where, by assumption, $0\leq i<j\leq n$ and all unlisted weights are equal to $0$:
\begin{align}
\label{weightsR}
\begin{tabular}{|c|c|c|}
\hline
\quad
\tikz{0.6}{
	\draw[lgray,line width=1.5pt,->] (-1,0) -- (1,0);
	\draw[lgray,line width=1.5pt,->] (0,-1) -- (0,1);
	\node[left] at (-1,0) {\tiny $i$};\node[right] at (1,0) {\tiny $i$};
	\node[below] at (0,-1) {\tiny $i$};\node[above] at (0,1) {\tiny $i$};
}
\quad
&
\quad
\tikz{0.6}{
	\draw[lgray,line width=1.5pt,->] (-1,0) -- (1,0);
	\draw[lgray,line width=1.5pt,->] (0,-1) -- (0,1);
	\node[left] at (-1,0) {\tiny $i$};\node[right] at (1,0) {\tiny $i$};
	\node[below] at (0,-1) {\tiny $j$};\node[above] at (0,1) {\tiny $j$};
}
\quad
&
\quad
\tikz{0.6}{
	\draw[lgray,line width=1.5pt,->] (-1,0) -- (1,0);
	\draw[lgray,line width=1.5pt,->] (0,-1) -- (0,1);
	\node[left] at (-1,0) {\tiny $i$};\node[right] at (1,0) {\tiny $j$};
	\node[below] at (0,-1) {\tiny $j$};\node[above] at (0,1) {\tiny $i$};
}
\quad
\\[1.3cm]
\quad
$1$
\quad
& 
\quad
$\dfrac{q(z-1)}{z-q}$
\quad
& 
\quad
$\dfrac{z(1-q)}{z-q}$
\quad
\\[0.7cm]
\hline
&
\quad
\tikz{0.6}{
	\draw[lgray,line width=1.5pt,->] (-1,0) -- (1,0);
	\draw[lgray,line width=1.5pt,->] (0,-1) -- (0,1);
	\node[left] at (-1,0) {\tiny $j$};\node[right] at (1,0) {\tiny $j$};
	\node[below] at (0,-1) {\tiny $i$};\node[above] at (0,1) {\tiny $i$};
}
\quad
&
\quad
\tikz{0.6}{
	\draw[lgray,line width=1.5pt,->] (-1,0) -- (1,0);
	\draw[lgray,line width=1.5pt,->] (0,-1) -- (0,1);
	\node[left] at (-1,0) {\tiny $j$};\node[right] at (1,0) {\tiny $i$};
	\node[below] at (0,-1) {\tiny $i$};\node[above] at (0,1) {\tiny $j$};
}
\quad
\\[1.3cm]
& 
\quad
$\dfrac{z-1}{z-q}$
\quad
&
\quad
$\dfrac{(1-q)}{z-q}$
\quad 
\\[0.7cm]
\hline
\end{tabular}
\end{align}
The parameter $q$ in the weights above is called a \emph{quantization parameter} and throughout this work we assume that it is a real number from $(0,1)$ equal for all vertices. The parameter $z$ is called a \emph{spectral parameter} and it will vary between vertices.

The vertex weights $R_z$ are related to the stochastic version of the $U_q(\widehat{\frak{sl}_n})$ $R$-matrix from \cite{Jim86}, and they are distinguished by several properties, \emph{cf.} \cite{BW18}.\footnote{In \cite{BW18} and other related works the spectral parameter $z$ in the weights $R_z$ is the inverse of ours. This difference will be offset later, by setting spectral parameter of a vertex equal to the ratio $x/y$ of the row and column rapidities, rather than $y/x$ like in \cite{BW18}.} First of all, they satisfy the following Yang-Baxter equation: for any fixed  $a_1,a_2,a_3,b_1,b_2,b_3\in\{0,\dots, n\}$ and for any $x,y,z\in\mathbb C$ we have

\begin{multline*}
\sum_{k_1,k_2,k_3}R_{x/y}(a_2,a_3;k_2,k_3)R_{x/z}(a_1,k_3;k_1,b_3)R_{y/z}(k_1,k_2;b_1,b_2)\\
=\sum_{k_1,k_2,k_3}R_{y/z}(a_1,a_2;k_1,k_2)R_{x/z}(k_1,a_3;b_1,k_3)R_{x/y}(k_2,k_3;b_2,b_3),
\end{multline*}
where the sums are taken over $k_1,k_2,k_3=\{0,1,\dots, n\}$. Graphically, the same identity can be written as follows:
\begin{equation}
\label{YB}
\sum_{k_1,k_2,k_3}
\tikzbase{0.9}{3ex}{
	\draw[lgray,line width=1.5pt,->]
	(-2,0.5) node[above,scale=0.6] {\color{black} $a_3$} -- (-1,-0.5) node[below,scale=0.6] {\color{black} $k_3$} -- (1,-0.5) node[right,scale=0.6] {\color{black} $b_3$};
	\draw[lgray,line width=1.5pt,->] 
	(-2,-0.5) node[below,scale=0.6] {\color{black} $a_2$} -- (-1,0.5) node[above,scale=0.6] {\color{black} $k_2$} -- (1,0.5) node[right,scale=0.6] {\color{black} $b_2$};
	\draw[lgray,line width=1.5pt,->] 
	(0,-1.5) node[below,scale=0.6] {\color{black} $a_1$} -- (0,0) node[right, scale=0.6] {\color{black} $k_1$} -- (0,1.5) node[above,scale=0.6] {\color{black} $b_1$};
	\node[left] at (-2.2,0.5) {$(x) \rightarrow$};
	\node[left] at (-2.2,-0.5) {$(y) \rightarrow$};
	\node[below] at (0,-1.9) {$\uparrow$};
	\node[below] at (0,-2.4) {$(z)$};
}
\quad
=
\quad
\sum_{k_1,k_2,k_3}
\tikzbase{0.9}{3ex}{
	\draw[lgray,line width=1.5pt,->] 
	(-1,1) node[left,scale=0.6] {\color{black} $a_3$} -- (1,1) node[above,scale=0.6] {\color{black} $k_3$} -- (2,0) node[below,scale=0.6] {\color{black} $b_3$};
	\draw[lgray,line width=1.5pt,->] 
	(-1,0) node[left,scale=0.6] {\color{black} $a_2$} -- (1,0) node[below,scale=0.6] {\color{black} $k_2$} -- (2,1) node[above,scale=0.6] {\color{black} $b_2$};
	\draw[lgray,line width=1.5pt,->] 
	(0,-1) node[below,scale=0.6] {\color{black} $a_1$} -- (0,0.5) node[right, scale=0.6] {\color{black} $k_1$} -- (0,2) node[above,scale=0.6] {\color{black} $b_1$};
	\node[left] at (-1.5,1) {$(x) \rightarrow$};
	\node[left] at (-1.5,0) {$(y) \rightarrow$};
	\node[below] at (0,-1.4) {$\uparrow$};
	\node[below] at (0,-1.9) {$(z)$};
}
\end{equation}
Here the vertices are denoted by transversal intersection of lines, with each line carrying a parameter, called a \emph{rapidity}. The spectral parameter of a vertex is equal to the ratio $x/y$ of the row rapidity $x$ and the column rapidity $y$.

Additionally, the weights $R_z$ are \emph{stochastic} in the following sense:

\begin{equation}
\label{stoch6V}
\sum_{k,l}
\tikzbase{0.6}{-0.56ex}{
	\draw[lgray,line width=1.5pt,->] (-1,0) -- (1,0);
	\draw[lgray,line width=1.5pt,->] (0,-1) -- (0,1);
	\node[left] at (-1,0) {\tiny $j$};\node[right] at (1,0) {\tiny $l$};
	\node[below] at (0,-1) {\tiny $i$};\node[above] at (0,1) {\tiny $k$};
}
=1.
\end{equation}

To describe domains of the vertex model, we will use the following notational convention: set
\be
\dZ:=\left\{i+\frac{1}{2}\ \Big |\ i\in \Z\right\}.
\ee
Then the grid $\ddZ$ can be interpreted as a grid dual to $\Z\times\Z$, with a point $(\alpha,\beta)\in\ddZ$ corresponding to the face of $\Z\times\Z$ bounded by the $\(\alpha\pm\frac{1}{2}\)$th columns and the $\(\beta\pm\frac{1}{2}\)$th rows.

\begin{defin}
An \emph{up-left path} is a sequence of lattice points
\be
P=(\alpha_0, \beta_0)\to\dots\to(\alpha_l,\beta_l),
\ee
where $(\alpha_i,\beta_i)\in\ddZ$ and for each $i=1,\dots, l$ we have
\be
(\alpha_i,\beta_i)=(\alpha_{i-1}-1,\beta_{i-1}),\quad \text{or}\quad (\alpha_i,\beta_i)=(\alpha_{i-1},\beta_{i-1}+1).
\ee
In the first case we say that the step $(\alpha_{i-1},\beta_{i-1})\to(\alpha_i,\beta_i)$ is \emph{horizontal}, while in the latter case the step $(\alpha_{i-1},\beta_{i-1})\to(\alpha_i,\beta_i)$ is called \emph{vertical}. A \emph{coloring} of an up-left path $P$ is a sequence of colors $c_1,\dots, c_l$, where the color $c_i$ is called the color of the $i$th step $(\alpha_{i-1},\beta_{i-1})\to(\alpha_i,\beta_i)$.
\end{defin}

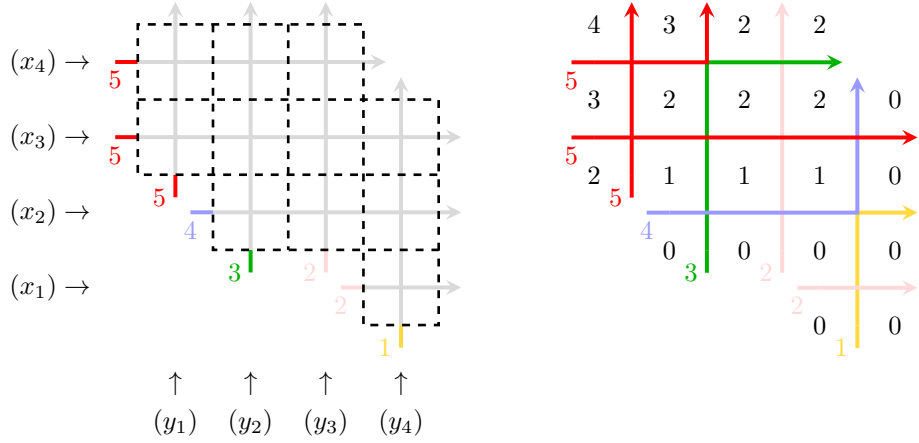
\begin{figure}
\begin{tikzpicture}[scale=1,baseline={([yshift=0]current bounding box.center)},>=stealth]

	\draw[yellow,line width=1.5pt] (4, 0.2) -- (4, 0.5);
	\node[left, text=yellow] at (4, 0.2) {1};
	\draw[lgray,line width=1.5pt, ->] (4, 0.5) -- (4, 3.8);
	\node[below] at (4,0) {$\uparrow$};
	\node[below] at (4,-0.5) {$(y_4)$};
	
	\draw[lred,line width=1.5pt] (3, 1.2) -- (3, 1.5);
	\node[left, text=lred] at (3, 1.2) {2};
	\draw[lgray,line width=1.5pt, ->] (3, 1.5) -- (3, 4.8);
	\node[below] at (3,0) {$\uparrow$};
	\node[below] at (3,-0.5) {$(y_3)$};
	
	\draw[dgreen,line width=1.5pt] (2, 1.2) -- (2, 1.5);
	\node[left, text=dgreen] at (2, 1.2) {3};
	\draw[lgray,line width=1.5pt, ->] (2, 1.5) -- (2, 4.8);
	\node[below] at (2,0) {$\uparrow$};
	\node[below] at (2,-0.5) {$(y_2)$};

	\draw[red,line width=1.5pt] (1, 2.2) -- (1, 2.5);
	\node[left, text=red] at (1, 2.2) {5};	
	\draw[lgray,line width=1.5pt, ->] (1, 2.5) -- (1, 4.8);
	\node[below] at (1,0) {$\uparrow$};
	\node[below] at (1,-0.5) {$(y_1)$};
	
	\draw[lred,line width=1.5pt] (3.2, 1) -- (3.5, 1);
	\node[below, text=lred] at (3.2, 1) {2};
	\draw[lgray,line width=1.5pt, ->] (3.5, 1) -- (4.8, 1);
	\node[left] at (0,1) {$(x_1) \rightarrow$};
	
	\draw[lblue,line width=1.5pt] (1.2, 2) -- (1.5, 2);
	\node[below, text=lblue] at (1.2, 2) {4};
	\draw[lgray,line width=1.5pt, ->] (1.5, 2) -- (4.8, 2);
	\node[left] at (0,2) {$(x_2) \rightarrow$};
	
	\draw[red, line width=1.5pt] (0.2, 3) -- (0.5, 3);
	\node[below, text=red] at (0.2, 3) {5};
	\draw[lgray,line width=1.5pt, ->] (0.5, 3) -- (4.8, 3);
	\node[left] at (0,3) {$(x_3) \rightarrow$};
	
	\draw[red, line width=1.5pt] (0.2, 4) -- (0.5, 4);
	\node[below, text=red] at (0.2, 4) {5};
	\draw[lgray,line width=1.5pt, ->] (0.5, 4) -- (3.8, 4);
	\node[left] at (0,4) {$(x_4) \rightarrow$};
	
	\draw[style=dual] (0.5,3.5) -- (4.5,3.5) -- (4.5,0.5) -- (3.5,0.5) -- (3.5, 4.5) -- (0.5,4.5) -- (0.5, 2.5) -- (4.5, 2.5);
	\draw[style=dual] (1.5, 4.5) -- (1.5, 1.5) -- (4.5,1.5);
	\draw[style=dual] (2.5, 4.5) -- (2.5, 1.5);
\end{tikzpicture}
\hspace{1cm}
\begin{tikzpicture}[scale=1,baseline={([yshift=-15]current bounding box.center)},>=stealth]

	\draw[yellow,line width=1.5pt] (4, 0.2) -- (4, 0.5);
	\node[left, text=yellow] at (4, 0.2) {1};
	\draw[yellow,line width=1.5pt, ->] (4, 0.5) -- (4,2) -- (4.8,2);
	
	\draw[lred,line width=1.5pt] (3, 1.2) -- (3, 1.5);
	\node[left, text=lred] at (3, 1.2) {2};
	\draw[lred,line width=1.5pt, ->] (3, 1.5) -- (3, 4.8);

	\draw[lred,line width=1.5pt] (3.2, 1) -- (3.5, 1);
	\node[below, text=lred] at (3.2, 1) {2};
	\draw[lred,line width=1.5pt, ->] (3.5, 1) -- (4.8, 1);
	
	\draw[dgreen,line width=1.5pt] (2, 1.2) -- (2, 1.5);
	\node[left, text=dgreen] at (2, 1.2) {3};
	\draw[dgreen,line width=1.5pt, ->] (2, 1.5) -- (2,4) -- (3.8, 4);
	
	\draw[lblue,line width=1.5pt] (1.2, 2) -- (1.5, 2);
	\node[below, text=lblue] at (1.2, 2) {4};
	\draw[lblue,line width=1.5pt, ->] (1.5, 2) -- (4, 2) -- (4, 3.8);
	
	\draw[red,line width=1.5pt] (1, 2.2) -- (1, 2.5);
	\node[left, text=red] at (1, 2.2) {5};	
	\draw[red,line width=1.5pt, ->] (1, 2.5) -- (1, 4.8);
	
	\draw[red, line width=1.5pt] (0.2, 3) -- (0.5, 3);
	\node[below, text=red] at (0.2, 3) {5};
	\draw[red,line width=1.5pt, ->] (0.5, 3) -- (4.8, 3);
	
	\draw[red, line width=1.5pt] (0.2, 4) -- (0.5, 4);
	\node[below, text=red] at (0.2, 4) {5};
	\draw[red,line width=1.5pt, ->] (0.5, 4) -- (2,4) --(2, 4.8);
	
	\node[] at (4.5, 0.5) {0};
	\node[] at (3.5, 0.5) {0};
	\node[] at (3.5, 1.5) {0};
	\node[] at (4.5, 1.5) {0};
	\node[] at (4.5, 2.5) {0};
	\node[] at (4.5, 3.5) {0};
	
	\node[] at (3.5, 2.5) {1};
	\node[] at (3.5, 3.5) {2};
	\node[] at (3.5, 4.5) {2};
	
	\node[] at (2.5, 1.5) {0};
	\node[] at (2.5, 2.5) {1};
	\node[] at (2.5, 3.5) {2};
	\node[] at (2.5, 4.5) {2};
	
	\node[] at (1.5, 1.5) {0};
	\node[] at (1.5, 2.5) {1};
	\node[] at (1.5, 3.5) {2};
	\node[] at (1.5, 4.5) {3};	
	
	\node[] at (0.5, 2.5) {2};
	\node[] at (0.5, 3.5) {3};
	\node[] at (0.5, 4.5) {4};	
\end{tikzpicture}
\caption{
\label{skewDomain}
Left: an example of the data for a SC6V model, namely, a skew domain, rapidities of rows and columns and a monotone coloring. Here the coloring is $(1,2,2,3,4,5,5,5)$. Right: an example of a path configuration satisfying the boundary condition on the left picture, as well as the values of the height function $h^{(\alpha,\beta)}_{\geq 4}$ with respect to this configuration.}
\end{figure}

For two up-left paths $P,P'$ of length $l$ with the same endpoints we write $P\leq P'$ if for all $0\leq i\leq l$ we have $\alpha_i+\beta_i\leq \alpha_i'+\beta_i'$. Equivalently, $P\leq P'$ if the path $P$ is to the down-left of the path  $P'$. For a pair of paths $Q\leq P$ the \emph{skew domain} $P-Q$ is defined as the collection of vertices of the lattice $\Z\times\Z$ to the left of path $P$ and to the right of $Q$, as well as all edges, adjacent to these vertices, see Figure \ref{skewDomain}.

Fix a skew domain $P-Q$ and let $\mathbf{c}=(c_1,\dots, c_l)$ be a coloring of $Q$. We will assume that the coloring is \emph{monotonic}, i.e.
\be
c_1\leq c_2\leq\dots\leq c_l.
\ee 
Note that there is a one-to-one correspondence between lattice edges entering the domain from the down-left and the steps of $Q$. Using this correspondence, we can color the incoming lattice edges using the coloring $\mathbf c$. More precisely, we assign color $c_i$ to the lattice edge intersecting the $i$th step of $Q$, see Figure \ref{skewDomain}.

For each row and column intersecting the region we assign a rapidity, with $x_i$ denoting the rapidity of the $i$th row counting from the bottom and $y_j$ denoting the rapidity of the $j$th column counting from the left. Note that each column and row uniquely corresponds to the step of $Q$ it intersects, so we can instead assign parameters $\z_i$ to the steps of the path $Q$, with $\z_i=x_j$ if the $i$th step is vertical and intersects the $j$th row, and similarly for columns. 

The data above defines boundary conditions for the incoming edges and the parameters of each vertex\footnote{with the convention that the spectral parameter of a vertex is the ratio of the row and column rapidities}. Let $\Omega(Q,P;c_1,\dots, c_l)$ denote the set of configurations (\emph{i.e.} labelings of the edges) on the skew domain $P-Q$ such that the labels of the incoming edges are given by the coloring $(c_1,\dots,c_l)$. Finally, we define a complex valued probability measure on $\Omega(Q,P;c_1,\dots, c_l)$, setting the probability of a configuration equal to the product of the $R$-weights of the vertices inside $P-Q$. To sum the whole construction up, we give the following definition.

\begin{defin}
\label{skewDomainDef}
Let $Q\leq P$ be a pair of up-left paths of length $l$, and let $\mathbf{c}=(c_1, c_2,\dots, c_l), \mathbf z=(\z_1, \z_2,\dots, \z_l)$  be parameters assigned to the steps of $Q$ satisfying
\be
c_1\leq c_2\leq\dots\leq c_l.
\ee 
Define $\Omega(Q,P;\mathbf c)$ as the space of all configurations on the skew domain $P-Q$ (alternatively, up-right colored path ensembles on $P-Q$ with no edge being occupied by more than one path) satisfying the boundary conditions defined by $\mathbf{c}$: the color of the incoming edge intersecting the $i$th step of $Q$ coincides with $c_i$. A \emph{SC6V model} $\mathcal M=\mathcal M(Q,P,\mathbf c,\mathbf z)$ associated to this data is a complex probability measure on the space $\Omega(Q,P;\mathbf{c})$, with the probability of a configuration being equal to the product of weights $R_z$ over all the vertices inside $P-Q$, where for a vertex $(i,j)$ the spectral parameter is set to $z=x_i/y_j$.
\end{defin}

The constructed probability measure can be interpreted as a result of the following Markovian sampling procedure: take the minimal $m$ such that the diagonal $i+j=m$ intersects the region of the skew domain. For any vertex on this diagonal, the colors of the incoming edges are defined by the boundary conditions. Now, for each vertex on the diagonal we independently choose one of the possible colorings of the outgoing edges, with probabilities given by the weights of resulting configurations around the vertex. The stochasticity \eqref{stoch6V} of weights implies that all applicable weights sum to $1$. On the next step, we take vertices on the diagonal $i+j=m+1$, whose incoming edges are already determined by the boundary conditions and the choices made on the previous step. Repeating the sampling procedure for the subsequent diagonals we arrive at a random collection of paths on the skew domain.

The stochasticity of the weights $R_z$ also implies the following property: let $Q\leq P\leq P'$, and let $\mathbf c, \mathbf z$ be parameters of a vertex model (assigned to steps of $Q$). Let $\M$ and $\M'$ be probability measures defined on $P-Q$ and $P'-Q$ using $\mathbf c,\mathbf{z}$. Then, by the stochasticity, the restriction of the measure $\M'$ on $\Omega(Q,P;\mathbf c)$ is the measure $\M$. Hence, the up-left path $P$ is not actually essential in the definition of the model, and will be sometimes omitted.

\subsection{Height function} Instead of assigning labels to edges as we did before, one can describe a collection of up-right paths using \emph{colored height functions}. Assume that $\mathcal M$ is a SC6V vertex model on a skew domain $P-Q$, with a monotone coloring $\mathbf{c}=(c_1, \dots, c_l)$ defining the colors of the incoming edges. Let $Q_i$ denote the points of $Q$:
\be
Q=Q_0\to\dots\to Q_l.
\ee 
For each $c\geq 0$ the corresponding colored height function is denoted by $h^{(\alpha,\beta)}_{>c}(\Sigma)$ and it depends on a point $(\alpha,\beta)\in\ddZ$ inside the skew domain $P-Q$ and a configuration $\Sigma\in\Omega(Q, P; \mathbf{c})$ satisfying the boundary conditions given by $\mathbf{c}$. The values of the height function are defined recursively, using the initial condition
\be
h_{>c}^{Q_0}(\Sigma)=0
\ee
and the following local relations: if in the configuration $\Sigma$ the lattice edge between $(\alpha,\beta)$ and $(\alpha+1,\beta)$ has color $i$, then
\begin{equation}
\label{localH1}
h_{>c}^{(\alpha+1,\beta)}(\Sigma)=h_{>c}^{(\alpha,\beta)}(\Sigma)-\1_{i>c}.
\end{equation}
Similarly, if the lattice edge between $(\alpha,\beta)$ and $(\alpha,\beta+1)$ is of color $i$, then
\begin{equation} 
\label{localH2}
h_{>c}^{(\alpha,\beta+1)}(\Sigma)=h_{>c}^{(\alpha,\beta)}(\Sigma)+\1_{i>c}
\end{equation}
One can readily see that the conservation law for vertices\footnote{The number of the incoming paths of a given color coincides with the number of the outgoing paths of a given color.} ensures the existence of the unique well-defined function satisfying the initial condition and relations \eqref{localH1}, \eqref{localH2}. On the other hand, the values of the height functions $h_{>c}^{(\alpha,\beta)}(\Sigma)$ for a fixed configuration $\Sigma$ uniquely determine this configuration because of the local relations \eqref{localH1}, \eqref{localH2}. See Figure \ref{skewDomain} for an example of values of a height function for a given configuration.

\subsection{Exchange relations.} The SC6V model can be used to define a certain family of operators, called \emph{row operators}, with an explicitly described exchange relations between them implied by the Yang-Baxter equation. Fix an integer $M>0$ and consider a vector space $V_M$ spanned by $M$-tuples of colors:
\be
V_M:=\mathrm{Span}_{\mathbb C}\{|\bi\rangle\}=\mathrm{Span}_{\mathbb C}\{|i_1,\dots,i_M\rangle\}_{i_1,\dots,i_M\in\{0,\dots,n\}}\cong \(\mathbb C^{n+1}\)^{\otimes M}.
\ee
We will interpret $V_M$ as the space of configurations of $M$ columns in the SC6V model.  Using this interpretation, we can define row operators $\C_k(x\mid y_1,\dots,y_M):V_M\to V_M$ by specifying their matrix coefficients
\be
\langle \bj|\ \C_k(x\mid y_1,\dots,y_M)\ |\bi\rangle:= 
\tikzbase{0.8}{13}{
	\draw[lgray,line width=1.5pt,->] (1,2) -- (8,2);
	\foreach\x in {2,...,7}{
		\draw[lgray,line width=1.5pt,->] (\x,1) -- (\x,3);
	}
	\node[above] at (7,3) {$j_{M}$};
	\node[above] at (5,3) {$\cdots$};
	\node[above] at (4,3) {$\cdots$};
	\node[above] at (2,3) {$j_1$};
	\node[left] at (0.5,2) {$(x)\rightarrow$}; \node[left] at (1,2) {$k$};
	\node[below] at (7,-0.3) {$(y_M)$};\node[below] at (7,0.2) {$\uparrow$}; \node[below] at (7,1) {$i_{M}$};
	\node[below] at (5,1) {$\cdots$};
	\node[below] at (4,1) {$\cdots$};
	\node[below] at (2,-0.3) {$(y_1)$}; \node[below] at (2,0.2) {$\uparrow$}; \node[below] at (2,1) {$i_1$};
	\node[right] at (8,2) {$0$};
},
\ee
where $\{\langle \bj|\}_{\bj}$ is the basis dual to $\{|\bi\rangle\}_{\bi}$. 

Using a ``zipper"-like argument one can show directly from the Yang-Baxter relation \eqref{YB}  that the operators $\C_k$ satisfy the following exchange relation, \emph{cf.} \cite[Section 3.2]{BW18}:
\begin{equation}
\label{exchange}
\C_{k_1}(x_1|\by)\C_{k_2}(x_2|\by)=\frac{x_2-qx_1}{x_2-x_1}\C_{k_2}(x_2|\by)\C_{k_1}(x_1|\by)-\frac{x_1(1-q)}{x_2-x_1}\C_{k_2}(x_1|\by)\C_{k_1}(x_2|\by),
\end{equation}
where $k_1<k_2$ and $\by=(y_1,\dots, y_M)$. Beside the equation above, there exist similar relations for $k_1=k_2$ and $k_1>k_2$, \emph{cf.} \cite{BW18}; in this work we will only need the relation for $k_1<k_2$.

\subsection{Fused vertex weights.} The vertex weights $R_z$ can be used to get a more general vertex model, with relaxed restrictions on the number of paths along lattice edges. The weights of this model are denoted by
\be
W_{x/y}^{(\NN,\MM)}\(
\tikzbase{0.4}{-0.5ex}{
	\draw[lgray,line width=1pt,->] (-1,0) -- (1,0);
	\draw[lgray,line width=1pt,->] (0,-1) -- (0,1);
	\node[left] at (-1,0) {\tiny $\bB$};\node[right] at (1,0) {\tiny $\bD$};
	\node[below] at (0,-1) {\tiny $\bA$};\node[above] at (0,1) {\tiny $\bC$};
}
\)=
\tikzbase{0.9}{2ex}{
	\draw[lgray,line width=4pt,->] (-1,0) -- (1,0);
	\draw[lgray,line width=4pt,->] (0,-1) -- (0,1);
	\node[left] at (-1,0) {\tiny $\bB$};\node[right] at (1,0) {\tiny $\bD$};
	\node[below] at (0,-1) {\tiny $\bA$};\node[above] at (0,1) {\tiny $\bC$};
	\node[left] at (-1.3,0) {$(x,\NN) \rightarrow$};
	\node[below] at (0,-1.4) {$\uparrow$};
	\node[below] at (0,-1.9) {$(y,\MM)$};
}
\ee
Here the horizontal edges can carry no more than $\NN$ paths and the vertical edges carry no more that $\MM$ paths. The colors of paths along edges are encoded by \emph{color compositions} $\bA,\bB,\bC,\bD\in\mathbb \Z_{\geq 0}^n$: the lattice edge labeled by the color composition $\bA=(A_1,\dots, A_n)$ has $A_i$ paths of color $i$, and similarly for the other compositions $\bB,\bC,\bD$. In terms of the color compositions, the constraint on the number of paths can be written as
\begin{equation}
\label{constraint}
|\bA|,|\bC|\leq \MM,\quad |\bB|,|\bD|\leq \NN.
\end{equation}
where $|\bP|=\sum_{i=1}^n P_i$ for $\bP=(P_1, \dots, P_n)$. Finally, $x$ and $y$ denote the rapidities of the row and the column respectively. 

The weights $W^{(\NN,\MM)}_z$ are constructed from the basic weights $R_z$ by a procedure called \emph{stochastic fusion}\footnote{Fusion in an algebraic setting was originally introduced in the works \cite{KRS81} and \cite{KR83}.}. For a vector $(i_1,\dots, i_{\NN})\in\{0,\dots, n\}^{\NN}$ define its color composition by
\be
\Comp(i_1,\dots, i_{\NN}):=(I_1,\dots, I_n), \quad I_k=\#\{a: i_a=k\}.
\ee
Set 
\be
\inv(i_1,\dots, i_{\NN})=\#\{1\leq a<b\leq N: i_a>i_b\},\quad \tinv(i_1,\dots, i_{\NN})=\#\{1\leq a<b\leq N: i_a<i_b\},
\ee
\be
Z_q(\NN,\bI)=\sum_{\Comp(i_1,\dots, i_{\NN})=\bI}q^{\inv(i_1,\dots, i_{\NN})}=\sum_{\Comp(i_1,\dots, i_{\NN})=\bI}q^{\tinv(i_1,\dots, i_{\NN})}=\frac{(q;q)_{\NN}}{(q;q)_{I_0} \dots (q;q)_{I_n}}
\ee
where $\bI=(I_1,\dots, I_n)$ and $I_0=\NN-\sum_{k=1}^n I_k$.

Having established the needed notation above, for $\NN,\MM\geq 1$ and color compositions $\bA,\bB,\bC,\bD$ satisfying constraints \eqref{constraint} we set
\begin{multline*}
W^{(\NN,\MM)}_{x/y}\(\vert{\bA}{\bB}{\bC}{\bD}\):=\frac{Z_q(\MM; \bC) Z_q(\NN;\bD)q^{-{\inv}(k_1,\dots,k_{\MM})}q^{-\tinv(l_1,\dots,l_{\NN})}}{Z_q(\MM; \bA) Z_q(\NN;\bB)}\\
\times \sum_{\substack{\Comp(i_1,\dots, i_{\MM})=\bA\\ \Comp(j_1,\dots, j_{\NN})=\bB}}q^{{\inv}(i_1,\dots,i_{\MM})}q^{\tinv(j_1,\dots,j_{\NN})}
\tikz{0.8}{
	\foreach\y in {2,...,5}{
		\draw[lgray,line width=1.5pt,->] (1,\y) -- (8,\y);
	}
	\foreach\x in {2,...,7}{
		\draw[lgray,line width=1.5pt,->] (\x,1) -- (\x,6);
	}
	\node[above] at (7,6) {$k_{\MM}$};
	\node[above] at (5,6.1) {\tiny $\cdots$};
	\node[above] at (4,6.1) {\tiny $\cdots$};
	\node[above] at (2,6) {$k_1$};
	\node[left] at (0,2) {$(x)\rightarrow$}; \node[left] at (1,2) {$j_1$};
	\node[left] at (0.5,3) {\tiny $\vdots$};
	\node[left] at (0.5,4) {\tiny$\vdots$};
	\node[left] at (0,5) {$(q^{\NN-1}x)\rightarrow$}; \node[left] at (1,5) {$j_{\NN}$};
	\node[below] at (7,-1) {$(y)$};\node[below] at (7,0) {$\uparrow$}; \node[below] at (7,1) {$i_{\MM}$};
	\node[below] at (5.2,0.7) {\tiny $\cdots$};
	\node[below] at (3.8,0.7) {\tiny $\cdots$};
	\node[below] at (2,-0.8) {$(q^{\MM-1}y)$}; \node[below] at (2,0.2) {$\uparrow$}; \node[below] at (2,1) {$i_1$};
	\node[right] at (8,2) {$l_1$};
	\node[right] at (8.1,3) {\tiny $\vdots$};
	\node[right] at (8.1,4) {\tiny $\vdots$};
	\node[right] at (8,5) {$l_{\NN}$};
}
\end{multline*}
where $(k_1,\dots, k_{\MM}), (l_1,\dots, l_{\NN})$ are any vectors such that $\Comp(k_1,\dots, k_{\MM})=\bC, \Comp(l_1,\dots, l_{\NN})=\bD$, and the picture in the right hand side denotes the sum of the products of the vertex weights $R_z$ over all configurations with colors of the boundary edges given by $i,j,k,l$.\footnote{As before, the spectral parameter $z$ of a vertex is the ratio of the row and the column rapidities.} Using the Yang-Baxter relation one can show that the right-hand side does not depend on the choice of $(k_1,\dots, k_{\MM})$ and $(l_1,\dots, l_{\NN})$; this property is called \emph{$q$-exchangeability} and it allows stacking such vertices to get a well-defined vertex model, see \cite[Section~8]{BGW19} for more details on this construction.

Following \cite{BM16} (see also \cite[Theorem C.1.1]{BW18} and \cite[Theorem 8.5]{BGW19}) the weights $W^{(\NN,\MM)}_z$ can be given by an explicit expression
\begin{multline*}
W_{z}^{(\NN,\MM)}\(
\tikzbase{0.4}{-0.5ex}{
	\draw[lgray,line width=1pt,->] (-1,0) -- (1,0);
	\draw[lgray,line width=1pt,->] (0,-1) -- (0,1);
	\node[left] at (-1,0) {\tiny $\bB$};\node[right] at (1,0) {\tiny $\bD$};
	\node[below] at (0,-1) {\tiny $\bA$};\node[above] at (0,1) {\tiny $\bC$};
}
\)=
\1_{|\bA|+|\bB|=|\bC|+|\bD|} z^{|\bD|-|\bB|}q^{|\bA|\NN-|\bD|\MM}\\
\times \sum_{\bP}\Phi(\bC-\bP, \bC+\bD-\bP;q^{\NN-\MM}z;q^{-\MM}z)\Phi(\bP,\bB;q^{-\NN}/z;q^{-\NN}),
\end{multline*}
where the sum is taken over $\bP=(P_1,\dots, P_n)\in\Z_{\geq 0}^n$ such that $P_i\leq \min(B_i,C_i)$ for all $1\leq i\leq n$, and for any pair of compositions $\bA,\bB$ we define
\be
\Phi(\bA,\bB;x,y):=\frac{(x;q)_{|\bA|}(y/x;q)_{|\bB-\bA|}}{(y;q)_{|\bB|}}(y/x)^{|\bA|}q^{\sum_{i<j}(B_i-A_i)A_j}\prod_{i=1}^n\binom{B_i}{A_i}_q.
\ee
Here we use the $q$-Pochhammer symbol and the $q$-binomial coefficient:
\be
(x;q)_n:=\prod_{i=1}^n(1-q^{i-1}x);\qquad \binom{n}{m}_q:=\frac{(q;q)_n}{(q;q)_m(q;q)_{n-m}}.
\ee 
Note that for fixed compositions of the edges, the weight $W_z^{(\NN,\MM)}$ depends rationally on $q^{\NN}$ and $q^{\MM}$ with no other dependence on $\NN,\MM$.

The weights $W_z^{(\NN,\MM)}$ satisfy the following version of the Yang-Baxter equation, \emph{cf.} \cite[(C.1.2)]{BW18}:
\be
\sum_{\bK_1,\bK_2,\bK_3}
\tikzbase{0.9}{3ex}{
	\draw[lgray,line width=4pt,->]
	(-2,0.5) node[above,scale=0.6] {\color{black} $\bA_3$} -- (-1,-0.5) node[below,scale=0.6] {\color{black} $\bK_3$} -- (1,-0.5) node[right,scale=0.6] {\color{black} $\bB_3$};
	\draw[lgray,line width=4pt,->] 
	(-2,-0.5) node[below,scale=0.6] {\color{black} $\bA_2$} -- (-1,0.5) node[above,scale=0.6] {\color{black} $\bK_2$} -- (1,0.5) node[right,scale=0.6] {\color{black} $\bB_2$};
	\draw[lgray,line width=4pt,->] 
	(0,-1.5) node[below,scale=0.6] {\color{black} $\bA_1$} -- (0,0) node[right, scale=0.6] {\color{black} $\bK_1$} -- (0,1.5) node[above,scale=0.6] {\color{black} $\bB_1$};
	\node[left] at (-2.2,0.5) {$(x,\NN) \rightarrow$};
	\node[left] at (-2.2,-0.5) {$(y,\MM) \rightarrow$};
	\node[below] at (0,-1.9) {$\uparrow$};
	\node[below] at (0,-2.4) {$(z,\LL)$};
}
\quad
=
\quad
\sum_{\bK_1,\bK_2,\bK_3}
\tikzbase{0.9}{3ex}{
	\draw[lgray,line width=4pt,->] 
	(-1,1) node[left,scale=0.6] {\color{black} $\bA_3$} -- (1,1) node[above,scale=0.6] {\color{black} $\bK_3$} -- (2,0) node[below,scale=0.6] {\color{black} $\bB_3$};
	\draw[lgray,line width=4pt,->] 
	(-1,0) node[left,scale=0.6] {\color{black} $\bA_2$} -- (1,0) node[below,scale=0.6] {\color{black} $\bK_2$} -- (2,1) node[above,scale=0.6] {\color{black} $\bB_2$};
	\draw[lgray,line width=4pt,->] 
	(0,-1) node[below,scale=0.6] {\color{black} $\bA_1$} -- (0,0.5) node[right, scale=0.6] {\color{black} $\bK_1$} -- (0,2) node[above,scale=0.6] {\color{black} $\bB_1$};
	\node[left] at (-1.5,1) {$(x,\NN) \rightarrow$};
	\node[left] at (-1.5,0) {$(y,\MM) \rightarrow$};
	\node[below] at (0,-1.4) {$\uparrow$};
	\node[below] at (0,-1.9) {$(z,\LL)$};
}
\ee
Moreover, they are stochastic:
\be
\sum _{\bC,\bD}W_{z}^{(\NN,\MM)}\(
\tikzbase{0.4}{-0.5ex}{
	\draw[lgray,line width=1pt,->] (-1,0) -- (1,0);
	\draw[lgray,line width=1pt,->] (0,-1) -- (0,1);
	\node[left] at (-1,0) {\tiny $\bB$};\node[right] at (1,0) {\tiny $\bD$};
	\node[below] at (0,-1) {\tiny $\bA$};\node[above] at (0,1) {\tiny $\bC$};
}
\)=1.
\ee
Both properties above can be seen as the consequence of the analogous properties for the weights $R_z$, via the fusion construction. 

In Section \ref{fusionSection} we will use two specializations of the weights $W^{(\NN,\MM)}_z$.  The \emph{stochastic colored higher spin six-vertex} weights $L^{(s)}_z$ are defined by
\be
L_{z}^{(s)}\(
\tikzbase{0.4}{-0.5ex}{
	\draw[lgray,line width=1pt,->] (-1,0) -- (1,0);
	\draw[lgray,line width=1pt,->] (0,-1) -- (0,1);
	\node[left] at (-1,0) {\tiny $j$};\node[right] at (1,0) {\tiny $l$};
	\node[below] at (0,-1) {\tiny $\bI$};\node[above] at (0,1) {\tiny $\bK$};
}
\):=
\restr{W_{z/s}^{(1,\MM)}\(
\tikzbase{0.4}{-0.5ex}{
	\draw[lgray,line width=1pt,->] (-1,0) -- (1,0);
	\draw[lgray,line width=1pt,->] (0,-1) -- (0,1);
	\node[left] at (-1,0) {\tiny $\e^j$};\node[right] at (1,0) {\tiny $\e^l$};
	\node[below] at (0,-1) {\tiny $\bI$};\node[above] at (0,1) {\tiny $\bK$};
}
\)}{q^{\MM}=s^{-2}}
\ee
where we use that $W_{z}^{(\NN,\MM)}$ depend rationally on $q^{\MM}$ to analytically continue the weights and set $q^{\MM}=s^{-2}$. An explicit expression for these weights is summarised in the following table:
\begin{align}
\label{Lweights}
\begin{tabular}{|c|c|c|}
\hline
\quad
\tikz{0.7}{
	\draw[lgray,line width=1.5pt,->] (-1,0) -- (1,0);
	\draw[lgray,line width=4pt,->] (0,-1) -- (0,1);
	\node[left] at (-1,0) {\tiny $0$};\node[right] at (1,0) {\tiny $0$};
	\node[below] at (0,-1) {\tiny $\bI$};\node[above] at (0,1) {\tiny $\bI$};
}
\quad
&
\quad
\tikz{0.7}{
	\draw[lgray,line width=1.5pt,->] (-1,0) -- (1,0);
	\draw[lgray,line width=4pt,->] (0,-1) -- (0,1);
	\node[left] at (-1,0) {\tiny $i$};\node[right] at (1,0) {\tiny $i$};
	\node[below] at (0,-1) {\tiny $\bI$};\node[above] at (0,1) {\tiny $\bI$};
}
\quad
&
\quad
\tikz{0.7}{
	\draw[lgray,line width=1.5pt,->] (-1,0) -- (1,0);
	\draw[lgray,line width=4pt,->] (0,-1) -- (0,1);
	\node[left] at (-1,0) {\tiny $0$};\node[right] at (1,0) {\tiny $i$};
	\node[below] at (0,-1) {\tiny $\bI$};\node[above] at (0,1) {\tiny $\bI-\e^i$};
}
\quad
\\[1.3cm]
\quad
$\dfrac{1-s z q^{I_{[1;n]}}}{1-sz}$
\quad
& 
\quad
$\dfrac{(s^2q^{I_i}-sz) q^{I_{[i+1;n]}}}{1-sz}$
\quad
& 
\quad
$\dfrac{sz(q^{I_i}-1) q^{I_{[i+1;n]}}}{1-sz}$
\quad
\\[0.7cm]
\hline
\quad
\tikz{0.7}{
	\draw[lgray,line width=1.5pt,->] (-1,0) -- (1,0);
	\draw[lgray,line width=4pt,->] (0,-1) -- (0,1);
	\node[left] at (-1,0) {\tiny $i$};\node[right] at (1,0) {\tiny $0$};
	\node[below] at (0,-1) {\tiny $\bI$};\node[above] at (0,1) {\tiny $\bI+\e^i$};
}
\quad
&
\quad
\tikz{0.7}{
	\draw[lgray,line width=1.5pt,->] (-1,0) -- (1,0);
	\draw[lgray,line width=4pt,->] (0,-1) -- (0,1);
	\node[left] at (-1,0) {\tiny $i$};\node[right] at (1,0) {\tiny $j$};
	\node[below] at (0,-1) {\tiny $\bI$};\node[above] at (0,1) 
	{\tiny $\bI+\e^i-\e^j$};
}
\quad
&
\quad
\tikz{0.7}{
	\draw[lgray,line width=1.5pt,->] (-1,0) -- (1,0);
	\draw[lgray,line width=4pt,->] (0,-1) -- (0,1);
	\node[left] at (-1,0) {\tiny $j$};\node[right] at (1,0) {\tiny $i$};
	\node[below] at (0,-1) {\tiny $\bI$};\node[above] at (0,1) {\tiny $\bI+\e^j-\e^i$};
}
\quad
\\[1.3cm] 
\quad
$\dfrac{1-s^2 q^{I_{[1;n]}}}{1-sz}$
\quad
& 
\quad
$\dfrac{sz(q^{I_j}-1) q^{I_{[j+1;n]}}}{1-sz}$
\quad
&
\quad
$\dfrac{s^2(q^{I_i}-1)q^{I_{[i+1;n]}}}{1-sz}$
\quad
\\[0.7cm]
\hline
\end{tabular} 
\end{align}
where $\bI=(I_1,\dots, I_n)$, all unlisted weights are $0$, and we use notation $I_{[a;b]}=\sum_{k=a}^bI_k$.

The \emph{$q$-Hahn} weights are obtained from the weights $W^{qH}_{z,s}$ by
\be
W^{qH}_{s,z}\(
\tikzbase{0.4}{-0.5ex}{
	\draw[lgray,line width=1pt,->] (-1,0) -- (1,0);
	\draw[lgray,line width=1pt,->] (0,-1) -- (0,1);
	\node[left] at (-1,0) {\tiny $\bB$};\node[right] at (1,0) {\tiny $\bC$};
	\node[below] at (0,-1) {\tiny $\bA$};\node[above] at (0,1) {\tiny $\bD$};
}
\):=
\restr{W_{1}^{(\NN,\MM)}\(
\tikzbase{0.4}{-0.5ex}{
	\draw[lgray,line width=1pt,->] (-1,0) -- (1,0);
	\draw[lgray,line width=1pt,->] (0,-1) -- (0,1);
	\node[left] at (-1,0) {\tiny $\bB$};\node[right] at (1,0) {\tiny $\bC$};
	\node[below] at (0,-1) {\tiny $\bA$};\node[above] at (0,1) {\tiny $\bD$};
}
\)}{\substack{q^{\NN}=z^{-2}\\q^{\MM}=s^{-2}}}
\ee
and they have the form
\begin{equation}
\label{eq:qHahExplicit}
W_{s,z}^{qH} \(
\tikzbase{0.4}{-0.5ex}{
	\draw[lgray,line width=1pt,->] (-1,0) -- (1,0);
	\draw[lgray,line width=1pt,->] (0,-1) -- (0,1);
	\node[left] at (-1,0) {\tiny $\bB$};\node[right] at (1,0) {\tiny $\bD$};
	\node[below] at (0,-1) {\tiny $\bA$};\node[above] at (0,1) {\tiny $\bC$};
}
\)  =
\left( \frac{s^2}{z^2} \right)^{|\bD|} \frac{(s^2/z^2;q)_{|\bA|-|\bD|} (z^2;q)_{|\bD|}}{(s^2;q)_{|\bA|}} q^{\sum_{i<j} D_i (A_j-D_j)} \prod_{i\ge 1} \binom{A_i}{A_i-D_i}_q.
\end{equation}
Note that if $A_i-D_i<0$, then (by definition) the weight of the vertex is equal to 0. One can interpret this by saying that the paths entering a vertex from the left must turn to the top, while arrows coming from the bottom continue to the top or to the right depending on the weight of the vertex (note that this weight depends on $\bA$ and $\bD$ only).

The vertex models with the weights $W_z^{(\NN,\MM)}, L^{(s)}_z$ and $W^{qH}_{s,z}$ can be constructed in a similar fashion as for the weights $R_z$: one defines a domain of vertices on the square lattice, assigns rapidities and an additional spin parameters $\NN,\MM$ or $s$ to rows and columns, specifies the boundary conditions and constructs a probability measure on the configurations. For the weights $L_z^{(s)}$ a configuration assigns integer labels to horizontal edges and color compositions to vertical ones, while for the weights $W_z^{(\NN,\MM)}$ and $W^{qH}_{s,z}$ a configuration is an assignment of color compositions to all edges.

The definition of the height function $h^{(\alpha,\beta)}_{>c}(\Sigma)$ can also be extended to the fused models. To do it, we set the value $h^{(\alpha,\beta)}_{>c}(\Sigma)$ for a single point $(\alpha,\beta)$ and use the following modification of the recurrence relations \eqref{localH1},\eqref{localH2}:
\be
h_{>c}^{(\alpha+1,\beta)}(\Sigma)=h_{>c}^{(\alpha,\beta)}(\Sigma)-I_{[c+1,n]},
\ee
\be
h_{>c}^{(\alpha,\beta+1)}(\Sigma)=h_{>c}^{(\alpha,\beta)}(\Sigma)+I_{[c+1, n]},
\ee
where $\bI$ is a color composition assigned to the corresponding edge (here we treat integer labels $i$ as color compositions $\e^i$).

\subsection{Color merging.} Different versions of color merging and color-blindness statements were described and proved in \cite{BW18},\cite{BGW19}, \cite{BW20}. Here we formulate the statement applicable to our setting. 

Let $\theta:\{1,\dots, n\}\to\{0, \dots, m\}$ be a monotone map, where $n,m$ are positive integers. It induces a map on compositions $\theta_*:\Z_{\geq 0}^n\to\Z_{\geq 0}^m$ defined by
\be
\theta_*:\bI=(I_1,\dots, I_n)\mapsto \bJ=(J_1,\dots, J_m),\quad \text{where}\ \ J_j=\sum_{i\in\theta^{-1}(j)}I_i
\ee
In terms of colored paths the map $\theta_*$ corresponds to merging all colors in $\theta^{-1}(j)$ into a new color labeled $j$, while all paths of colors $\theta^{-1}(0)$ are removed.

\begin{prop}
For any $\NN,\MM\geq 1$ and any color compositions $\bA,\bB\in \Z_{\geq 0}^n$ $\bC',\bD'\in \Z_{\geq 0}^m$ satisfying constraints \eqref{constraint}  we have
\be
\sum_{\bC=\theta_*(\bC'), \bD=\theta_*(\bD')} W_{z}^{(\NN,\MM)}\(
\tikzbase{0.4}{-0.5ex}{
	\draw[lgray,line width=1pt,->] (-1,0) -- (1,0);
	\draw[lgray,line width=1pt,->] (0,-1) -- (0,1);
	\node[left] at (-1,0) {\tiny $\bB$};\node[right] at (1,0) {\tiny $\bD$};
	\node[below] at (0,-1) {\tiny $\bA$};\node[above] at (0,1) {\tiny $\bC$};
}
\)=W_{z}^{(\NN,\MM)}\(
\tikzbase{0.4}{-0.5ex}{
	\draw[lgray,line width=1pt,->] (-1,0) -- (1,0);
	\draw[lgray,line width=1pt,->] (0,-1) -- (0,1);
	\node[left] at (-1,0) {\tiny $\theta_*(\bB)$};\node[right] at (1,0) {\tiny $\bD'$};
	\node[below] at (0,-1) {\tiny $\theta_*(\bA)$};\node[above] at (0,1) {\tiny $\bC'$};
}
\)
\ee
\end{prop}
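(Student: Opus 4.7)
The plan is to reduce the statement to color merging at the level of the basic weights $R_z$ and then propagate this through the stochastic fusion construction that defines $W_z^{(\NN,\MM)}$.

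\textbf{Step 1 (single-vertex color merging).} First I would verify, by case-by-case inspection of the table \eqref{weightsR}, that for any $i, j \in \{0, \dots, n\}$ and any $k', l' \in \{0, \dots, m\}$,
\[
\sum_{k,\, l \,:\, \theta(k) = k',\ \theta(l) = l'} R_z(i, j; k, l) \;=\; R_z(\theta(i), \theta(j); k', l'),
\]
with the convention $\theta(0) = 0$. The weights $R_z(i,j;\cdot,\cdot)$ depend on $(i,j)$ only through the order relation $i = j$, $i < j$, or $i > j$. If $\theta$ preserves the strict inequality between $i$ and $j$, the claim is immediate. If $\theta$ collapses distinct $i < j$ to the same merged color, the straight and swap weights sum to $1$ (via either $\tfrac{q(z-1) + z(1-q)}{z-q} = 1$ or $\tfrac{(z-1)+(1-q)}{z-q} = 1$), which matches the diagonal merged weight $R_z = 1$. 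The content of this step is that the conditional law of $(\theta(k),\theta(l))$ under $R_z(i,j;\cdot,\cdot)$ depends only on $(\theta(i),\theta(j))$ and coincides with the merged vertex weight.

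\textbf{Step 2 (propagation through the fusion block).} By the stochastic fusion construction, $W_z^{(\NN,\MM)}(\bA,\bB;\bC,\bD)$ is the probability that the output compositions on the top and right of an $\NN \times \MM$ block of basic $R_z$ vertices equal $\bC$ and $\bD$, given input compositions $\bA$ (bottom) and $\bB$ (left); by $q$-exchangeability this probability does not depend on the chosen vector representatives. The proposition is therefore equivalent to showing that if a configuration of the block is sampled with the basic $R_z$ weights and input vectors of compositions $\bA, \bB$, then the componentwise $\theta$-image of the configuration has the distribution of the corresponding merged block sampled with input vectors of compositions $\theta_*(\bA), \theta_*(\bB)$. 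I would prove this by induction on the vertices of the block processed in diagonal order: the inductive hypothesis is that the joint law of the $\theta$-images of all processed edges together with the inputs of unprocessed vertices agrees with the joint law produced by the analogous partial sampling in the merged block. The inductive step reduces to Step 1, which guarantees that the conditional law of the $\theta$-image of the outputs at the current vertex depends only on the $\theta$-image of its inputs and is given by the merged $R_z$. Passing to the top-right boundary and taking compositions yields the claimed identity.

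The main obstacle is setting up the inductive hypothesis precisely, as one needs to track not only the processed edges but also their joint law with the inputs of future vertices (which may be internal edges of the block). Once Step 1 is in place, however, the argument parallels the color-blindness statements of \cite{BW18}, \cite{BGW19}, \cite{BW20}, extended here to a general monotone $\theta$ with possibly nontrivial fiber $\theta^{-1}(0)$ (corresponding to the removal of some colors of paths).
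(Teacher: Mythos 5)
Your proof follows essentially the same route as the paper's (which is given in one sentence): verify the merging identity for the basic $R_z$ weights by direct inspection of the table, then push it through the stochastic fusion block. The paper's proof is equally terse about the induction in Step 2 — in particular, neither spells out that the $\theta$-pushforward of the $q$-exchangeable input distribution $\propto q^{\inv(i_1,\dots,i_\MM)}$ on vectors with composition $\bA$ is again the $q$-exchangeable distribution on vectors with composition $\theta_*(\bA)$, which is the base case your induction needs — so the level of detail matches.
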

\begin{proof}
This property is readily seen for $R$-weights from the explicit expression for them and monotonicity of $\theta$. Then one can perform stochastic fusion to extend the result to $W$-weights.
\end{proof}
\begin{rem} Note that in the left-hand side the weights are taken in the model with $n$ colors, while in the right-hand side the weights are taken in the model with $m$ colors. This proposition, in particular, shows that there is no ambiguity in not specifying the total number of possible colors.
\end{rem}
\begin{rem} One application of color merging is color-blindness, which merges all nonzero colors into a single color. This reduces the colored vertex models to uncolored ones, which were studied before in \cite{BP16} and other works.
\end{rem}

\section{Hecke algebras and Demazure-Lusztig operators} \label{symmetricGroupSection}
In this section we establish the notation and provide several facts related to symmetric groups, Hecke algebras and Demazure-Lusztig operators.
\subsection{Symmetric group.} Let $S_k$ denote the symmetric group of rank $k$, that is, the group of permutations of $\{1, \dots, k\}$. The multiplication is defined by composition of permutations: $\pi\tau(i)=\pi(\tau(i))$. Let $\sigma_i$ denote the simple transposition exchanging $i$ and $i+1$ while leaving other elements intact. A \emph{reduced expression} of a permutation $\pi\in S_k$ is a decomposition of $\pi$ into a product of simple transpositions
\be
\pi=\sigma_{i_1}\dots\sigma_{i_l}
\ee 
with the minimal amount of simple transpositions used.  This number of simple transpositions is called the \emph{length} of $\pi$ and it is denoted by $l(\pi)$.

The \emph{(strong) Bruhat order} on $S_k$ is a partial order defined by the covering relations $\pi\preceq \pi\cdot \tau_{a,b}$ whenever $l(\pi)<l(\pi\cdot \tau_{a,b})$. Here $\tau_{a,b}$ denotes the transposition exchanging $a$ and $b$. Alternatively, $\rho\preceq \pi$ if and only if
\be
\rho=\sigma_{j_1}\dots\sigma_{j_r}
\ee for some reduced expression 
\be
\pi=\sigma_{i_1}\dots\sigma_{i_l}
\ee
and a subsequence $(j_1,\dots,j_k)$ of the sequence $(i_1,\dots, i_l)$.

Define a left action of $S_k$ on $k$-tuples by
\be
\pi.(w_1,\dots, w_k):=(w_{\pi^{-1}(1)},\dots, w_{\pi^{-1}(k)}).
\ee
Similarly, define an action $t_\pi$ of $S_k$ on the space of functions in $k$ variables $w_1,\dots, w_k$:
\be
t_\pi f(\bw)=t_\pi f(w_1,\dots, w_k):=f(w_{\pi(1)},\dots, w_{\pi(k)})=f(\pi^{-1}.\mathbf{w}),
\ee
where $\mathbf {w}:=(w_1,\dots, w_k)$.

In the following sections we will also use the following notation. For $a\leq b$ set
\be
\sigma^+_{[a,b]}:=\sigma_{a}\sigma_{a+1}\dots \sigma_{b-1},\qquad \sigma^-_{[a,b]}:=\sigma_{b-1}\sigma_{b-2}\dots \sigma_{a},
\ee
so $\sigma^+_{[a,b]}$ is a cycle sending $b\mapsto a$ and $i\mapsto i+1$ for $a\leq i<b$, while $\sigma^-_{[a,b]}$ cycles the same elements in the opposite direction. We say that a permutation $\pi\in S_k$ is \emph{$[a,b]$-ordered} if 
\begin{equation}
\label{ordered}
\pi^{-1}(i)<\pi^{-1}(j),\quad \text{for}\quad a\leq i<j\leq b.
\end{equation}

\subsection{Demazure-Lusztig operators.} We define the Hecke algebra $H_k$ as an associative algebra over $\mathbb C$ with the basis $\{T_\pi\}$ enumerated by partitions $\pi\in S_k$ and satisfying the following relations
\be
T_{\pi}T_{\tau}=T_{\pi\tau}\quad\text{if}\ \ l(\pi\tau)=l(\pi)+l(\tau); \qquad\qquad (T_i-q)(T_i+1)=0\quad\text{for}\ 1\leq i< k,
\ee
where $T_i:=T_{\sigma_i}$.  In this work we identify the algebra $H_k$ with a certain representation of it, given by the \emph{Demazure-Lusztig} operators
\begin{equation}
\label{defT}
T_i=q+\frac{w_{i+1}-qw_i}{w_{i+1}-w_i}(t_i-1)=\frac{(q-1)w_{i+1}}{w_{i+1}-w_i}+\frac{w_{i+1}-qw_i}{w_{i+1}-w_i}t_i,
\end{equation}
which act on the space of rational functions in $w_1,\dots, w_k$. Here $t_i:=t_{\sigma_i}$ is an operator permuting $w_i$ and $w_{i+1}$. Note that these operators preserve the space of polynomials in $w_1,\dots, w_k$, giving a polynomial representation of $H_k$.

\subsection{Coefficients of $T_\pi$} \label{coefficientsSection} Later, in Section \ref{shiftSec}, we will need a more detailed description of the operators $T_\pi$. 
\begin{prop}
\label{kappaDef}
For any permutation $\pi\in S_k$ we have
\be
T_\pi=\sum_{\rho}\kappa_\pi^\rho(\bw)t_\rho,
\ee
where $\kappa_\rho^\pi(\bw)$ are rational functions in $\bw=(w_1,\dots, w_k)$. Moreover, $\kappa_\rho^\pi(\bw)=0$ unless $\rho\preceq\pi$.
\end{prop}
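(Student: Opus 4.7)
The plan is to prove both assertions simultaneously by induction on $l(\pi)$. The base case $\pi = e$ is trivial: $T_e = \mathrm{id} = t_e$, so $\kappa_e^e = 1$ and all other $\kappa_e^\rho$ vanish; and the only $\rho$ with $\rho \preceq e$ is $e$ itself.

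For the inductive step, fix any reduced expression $\pi = \sigma_{i}\pi'$ with $l(\pi') = l(\pi)-1$. Since $T_\pi$ is independent of the choice of reduced expression (well-known property of the Hecke algebra already recalled in the text), we have $T_\pi = T_i T_{\pi'}$. Using the expansion
\[
T_i = \frac{(q-1)w_{i+1}}{w_{i+1}-w_i} + \frac{w_{i+1}-qw_i}{w_{i+1}-w_i}\,t_i
\]
together with the basic commutation rule $t_i \cdot g = (t_i g)\cdot t_i$ for any rational function $g$ (viewed as a multiplication operator), applied to the inductive expansion $T_{\pi'} = \sum_{\rho'\preceq\pi'} \kappa_{\pi'}^{\rho'}\,t_{\rho'}$, one obtains after collecting the coefficient of $t_\rho$ (using $t_i t_{\rho'} = t_{\sigma_i \rho'}$):
\[
\kappa_\pi^\rho(\bw) \;=\; \frac{(q-1)w_{i+1}}{w_{i+1}-w_i}\,\kappa_{\pi'}^{\rho}(\bw) \;+\; \frac{w_{i+1}-qw_i}{w_{i+1}-w_i}\,\bigl(t_i \kappa_{\pi'}^{\sigma_i\rho}\bigr)(\bw).
\]
This confirms $\kappa_\pi^\rho$ is rational in $\bw$, and shows $\kappa_\pi^\rho \equiv 0$ unless at least one of $\rho \preceq \pi'$ or $\sigma_i\rho \preceq \pi'$ holds (by the inductive hypothesis).

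It remains to show this last condition implies $\rho \preceq \pi = \sigma_i\pi'$. This is the lifting property of the Bruhat order, which I would prove directly from the subword characterization recalled earlier: fix a reduced expression $\pi' = \sigma_{i_1}\cdots\sigma_{i_{l-1}}$, so that $\pi = \sigma_i \sigma_{i_1}\cdots\sigma_{i_{l-1}}$ is reduced. If $\rho \preceq \pi'$, then $\rho$ arises as a reduced subword of $\pi'$, hence also as a reduced subword of $\pi$, so $\rho \preceq \pi$. If $\sigma_i\rho \preceq \pi'$, then $\sigma_i\rho = \sigma_{j_1}\cdots\sigma_{j_r}$ reduced with $(j_1,\dots,j_r)$ a subsequence of $(i_1,\dots,i_{l-1})$, and then $\rho = \sigma_i \sigma_{j_1}\cdots\sigma_{j_r}$; if this latter expression is reduced (length $r{+}1$), it is a subword of $\pi$ and $\rho \preceq \pi$; otherwise $l(\rho) = r-1$, and then $\rho \preceq \sigma_i\rho \preceq \pi' \preceq \pi$ using the obvious chain.

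\textbf{Expected main obstacle.} The computational identity for $\kappa_\pi^\rho$ is straightforward once one is careful with operator-function composition ($t_i \cdot g = (t_ig)\cdot t_i$) and with left/right conventions for $t_\pi$. The more delicate point is the Bruhat-order lifting argument: one must verify that $\sigma_i\rho \preceq \pi'$ genuinely implies $\rho \preceq \pi$, handling the case where $\sigma_i$ prepended to a reduced expression of $\sigma_i\rho$ might not stay reduced. Splitting on whether $l(\sigma_i\rho) = l(\rho)\pm 1$ resolves this cleanly, as sketched above.
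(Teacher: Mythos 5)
Your proof is correct. The core computation is the same as in the paper --- expanding $T_\pi$ factor by factor and commuting the $t$-operators past the rational-function coefficients --- but you package it as an induction on $l(\pi)$ using a left decomposition $T_\pi = T_i T_{\pi'}$, whereas the paper expands the full product $T_{i_1}\cdots T_{i_l}$ in one shot and immediately reads off that every surviving term is indexed by a subsequence $(j_1,\dots,j_r)$ of $(i_1,\dots,i_l)$. The payoff of the paper's one-shot expansion is that the Bruhat-order claim follows \emph{directly} from the (non-reduced) subword characterization recalled earlier in the text: any $\rho = \sigma_{j_1}\cdots\sigma_{j_r}$ with $(j_1,\dots,j_r)$ a subsequence of a reduced word for $\pi$ automatically satisfies $\rho \preceq \pi$, with no need to worry about whether that subword is itself reduced. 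By contrast, your inductive packaging forces you to establish the lifting property of the Bruhat order ($\sigma_i\rho\preceq\pi'$ implies $\rho\preceq\sigma_i\pi'$), which you handle correctly by splitting on $l(\sigma_i\rho)=l(\rho)\pm 1$; that extra step is sound but could also be avoided by invoking the same non-reduced subword characterization, since $\sigma_i\sigma_{j_1}\cdots\sigma_{j_r}$ is trivially a subword of $\sigma_i\sigma_{i_1}\cdots\sigma_{i_{l-1}}$ regardless of whether it is reduced.
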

\begin{proof}
Choose a reduced expression $\pi=\sigma_{i_1}\dots\sigma_{i_l}$. Then we have
\be
T_\pi=T_{i_1}\dots T_{i_l}=\(\frac{(q-1)w_{i_1+1}}{w_{i_1+1}-w_{i_1}}+\frac{w_{i_1+1}-qw_{i_1}}{w_{i_1+1}-w_{i_1}}t_{i_1}\)\dots\(\frac{(q-1)w_{i_l+1}}{w_{i_l+1}-w_{i_l}}+\frac{w_{i_l+1}-qw_{i_l}}{w_{i_l+1}-w_{i_l}}t_{i_l}\)
\ee
Multiplying all terms in the right-hand side and moving operators $t_i$ to the right using the commutation relation 
\be
t_\pi f(\bw)=f(\pi^{-1}.\bw)t_\pi
\ee 
we obtain an expression of the following form
\begin{equation}
\label{reducedT}
T_\pi=\sum_{j_1,\dots,j_r}\mathcal A_{j_1,\dots j_r}(\bw)t_{j_1}\dots t_{j_r},
\end{equation}
where the sum is taken over subsequences $(j_1,\dots, j_r)$ of the sequence $(i_1,\dots, i_l)$ and all $\mathcal A(\bw)$ are rational functions. Thus the first part of the claim holds, with $\kappa^\rho_\pi(\bw)$ being sums of appropriate $\mathcal A_{j_1,\dots j_r}(\bw)$.

Note that $\kappa_\pi^\rho(\bw)\neq 0$ only for permutations $\rho$ such that $\rho=\sigma_{j_1}\dots\sigma_{j_r}$ for some subsequence $(j_1,\dots, j_r)$ of $(i_1,\dots, i_l)$. Using the alternative description of the Bruhat order, we have
\be
\rho=\sigma_{j_1}\dots \sigma_{j_s}\preceq \pi
\ee
for all such $\rho$. Hence we can have $\kappa_\pi^\rho(\bw)\neq 0$ only if $\rho\preceq \pi$.
\end{proof}

In this section we will describe the coefficients $\kappa_\pi^\rho(\bw)$. Recall that for $l(\pi)<l(\pi\sigma_{i})$ we have $T_{\pi\sigma_{i}}=T_\pi T_i$, hence
\be
\sum_{\rho}\kappa^{\rho}_{\pi\sigma_i}(\bw)t_{\rho}=\(\sum_{\rho}\kappa^{\rho}_{\pi}(\bw)t_{\rho}\)\(\frac{(q-1)w_{i+1}}{w_{i+1}-w_i}+\frac{w_{i+1}-qw_{i}}{w_{i+1}-w_{i}}t_i\).
\ee
which, after moving $t_\rho$ to the right, produces the following relation
\begin{equation}
\label{localZ}
\kappa^\rho_{\pi\sigma_{i}}(\bw)=\frac{(q-1)w_{\rho(i+1)}}{w_{\rho(i+1)}-w_{\rho(i)}} \kappa^\rho_{\pi}(\bw)+\frac{w_{\rho(i)}-qw_{\rho(i+1)}}{w_{\rho(i)}-w_{\rho(i+1)}}\kappa^{\rho\sigma_{i}}_{\pi}(\bw).
\end{equation}

Note that the relation \eqref{localZ} resembles the exchange relation \eqref{exchange}. To make this connection more explicit, for a pair of permutations $\rho,\pi$ define a partition function
\be
Z^\rho_\pi(\bw):=
\tikzbase{0.8}{20}{
	\foreach\y in {2,...,6}{
		\draw[lgray,line width=1.5pt,->] (1,\y) -- (7,\y);
	}
	\foreach\x in {2,...,6}{
		\draw[lgray,line width=1.5pt,->] (\x,1) -- (\x,7);
	}
	\node[above] at (6,7) {$k$};
	\node[above] at (4.5,7) {$\cdots$};
	\node[above] at (3.5,7) {$\cdots$};
	\node[above] at (2,7) {$1$};
	\node[left] at (0,2) {$\(w_{\rho(1)}\)\rightarrow$}; \node[left] at (1,2) {$\pi(1)$};
	\node[left] at (0.5,3.5) {$\vdots$};
	\node[left] at (0.5,4.5) {$\vdots$};
	\node[left] at (0,6) {$\(w_{\rho(k)}\)\rightarrow$}; \node[left] at (1,6) {$\pi(k)$};
	\node[below] at (6,-0.6) {$(w_k)$};\node[below] at (6,0) {$\uparrow$}; \node[below] at (6,1) {$0$};
	\node[below] at (4.5,1) {$\cdots$};
	\node[below] at (3.5,1) {$\cdots$};
	\node[below] at (2,-0.6) {$(w_1)$}; \node[below] at (2,0) {$\uparrow$}; \node[below] at (2,1) {$0$};
	\node[right] at (7,2) {$0$};
	\node[right] at (7,3.5) {$\vdots$};
	\node[right] at (7,4.5) {$\vdots$};
	\node[right] at (7,6) {$0$};
}
\ee
or, alternatively,
\be
Z^\rho_\pi(\bw):=\langle 1,2,\dots,k|\ \C_{\pi(k)}(w_{\rho(k)}|\bw)\dots\C_{\pi(1)}(w_{\rho(1)}|\bw)\ |0,0,\dots, 0\rangle.
\ee

\begin{prop}
\label{kappaZ}
For any permutations $\pi,\rho$ we have
\be
\kappa^\rho_\pi(\bw)=(-1)^{l(\pi)-l(\rho)}\prod_{a<b}\frac{w_b-qw_a}{w_b-w_a}\cdot Z^\rho_\pi(\bw).
\ee
\end{prop}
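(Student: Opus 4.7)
The plan is to proceed by induction on $l(\pi)$, matching the recurrence \eqref{localZ} for $\kappa^\rho_\pi$ against a parallel recurrence for $Z^\rho_\pi$ obtained from the exchange relation \eqref{exchange}.

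For the base case $\pi=\mathrm{id}$, one has $T_{\mathrm{id}}=1$, so $\kappa^\rho_{\mathrm{id}}=\delta_{\rho,\mathrm{id}}$, and I would compute $Z^\rho_{\mathrm{id}}(\bw)$ directly. The boundary data (left incoming colors $1,\dots,k$ from bottom to top, top exit colors $1,\dots,k$ from left to right, all other boundary edges empty) together with color conservation forces a unique configuration with nonzero weight: the path of color $i$ travels rightward through row $i$ and turns upward at column $i$. Multiplying the vertex weights from \eqref{weightsR} along this configuration produces
$$
Z^\rho_{\mathrm{id}}(\bw)=\prod_{r=1}^{k}\frac{w_r(1-q)}{w_{\rho(r)}-qw_r}\prod_{1\le j<r\le k}\frac{w_{\rho(r)}-w_j}{w_{\rho(r)}-qw_j}.
$$
For $\rho=\mathrm{id}$ the first product equals $1$ and the second equals $\prod_{a<b}\tfrac{w_b-w_a}{w_b-qw_a}$, matching the claimed identity. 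For $\rho\ne\mathrm{id}$, taking $r$ to be the largest index with $\rho(r)\ne r$ forces $\rho(r)<r$, so the factor with $j=\rho(r)$ in the second product vanishes and $Z^\rho_{\mathrm{id}}=0=\kappa^\rho_{\mathrm{id}}$.

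For the inductive step, assume the proposition for all permutations of length $\le L$ and take $\pi'=\pi\sigma_i$ with $l(\pi')=L+1$, so $\pi(i)<\pi(i+1)$. In the product defining $Z^\rho_{\pi\sigma_i}$ the two adjacent operators at positions $i,i+1$ are $\C_{\pi(i)}(w_{\rho(i+1)})$ and $\C_{\pi(i+1)}(w_{\rho(i)})$; applying \eqref{exchange} with $k_1=\pi(i)<k_2=\pi(i+1)$ and $x_1=w_{\rho(i+1)},\,x_2=w_{\rho(i)}$ expands this pair into two summands that reassemble into the products defining $Z^{\rho\sigma_i}_\pi$ and $Z^\rho_\pi$, yielding
$$
Z^\rho_{\pi\sigma_i}(\bw)=\frac{w_{\rho(i)}-qw_{\rho(i+1)}}{w_{\rho(i)}-w_{\rho(i+1)}}Z^{\rho\sigma_i}_\pi(\bw)+\frac{(q-1)w_{\rho(i+1)}}{w_{\rho(i)}-w_{\rho(i+1)}}Z^\rho_\pi(\bw).
$$
Writing $Y^\rho_\pi$ for the claimed right-hand side of the proposition, I would multiply the above identity by $(-1)^{l(\pi\sigma_i)-l(\rho)}\prod_{a<b}\tfrac{w_b-qw_a}{w_b-w_a}$ and use $l(\pi\sigma_i)=l(\pi)+1$ together with $(-1)^{l(\rho\sigma_i)-l(\rho)}=-1$ to convert it into
$$
Y^\rho_{\pi\sigma_i}(\bw)=\frac{(q-1)w_{\rho(i+1)}}{w_{\rho(i+1)}-w_{\rho(i)}}Y^\rho_\pi(\bw)+\frac{w_{\rho(i)}-qw_{\rho(i+1)}}{w_{\rho(i)}-w_{\rho(i+1)}}Y^{\rho\sigma_i}_\pi(\bw).
$$
By the inductive hypothesis $Y^\rho_\pi=\kappa^\rho_\pi$ and $Y^{\rho\sigma_i}_\pi=\kappa^{\rho\sigma_i}_\pi$, so comparison with \eqref{localZ} gives $Y^\rho_{\pi\sigma_i}=\kappa^\rho_{\pi\sigma_i}$, closing the induction.

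The main obstacle I expect is the sign bookkeeping in the inductive step: the minus sign in \eqref{exchange}, the unit increase of $l(\pi)$, and the $\pm1$ flip of $l(\rho\sigma_i)-l(\rho)$ all enter the prefactor $(-1)^{l(\pi)-l(\rho)}$, and one must check that they conspire so that the $Z$-recurrence, multiplied by $(-1)^{l(\pi\sigma_i)-l(\rho)}\prod_{a<b}\tfrac{w_b-qw_a}{w_b-w_a}$, reproduces \eqref{localZ} term by term. Apart from this careful verification, the proof is a direct comparison of two linear recurrences with identical base values.
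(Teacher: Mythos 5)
Your proposal is correct and follows essentially the same route as the paper: compute $Z^\rho_{\mathrm{id}}$ explicitly from the unique nonvanishing configuration, then induct on $l(\pi)$ by applying the exchange relation \eqref{exchange} at rows $i,i+1$ and checking that the rescaled $Z$-recurrence reproduces \eqref{localZ}. One small remark in your favor: your base-case formula carries the correct factor $w_r(1-q)/(w_{\rho(r)}-qw_r)$ coming from the weight $\frac{1-q}{z-q}$ with $z=w_{\rho(r)}/w_r$, whereas the paper's displayed formula for $Z^\rho_{\mathrm{id}}$ appears to drop the $w_r$ in the numerator (a harmless typo, since for $\rho=\mathrm{id}$ the factor reduces to $1$ and the subsequent conclusion is unchanged).
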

\begin{proof}
We start with $\pi=id$ case. It can be readily seen that in this case the partition function $Z_\pi^\rho(\bw)$ has only one configuration of paths satisfying the boundary conditions, and we obtain
\be
Z_{id}^\rho(\bw)=\prod_{i}\frac{1-q}{w_{\rho(i)}-qw_i}\prod_{a<b}\frac{w_{\rho(b)}-w_a}{w_{\rho(b)}-qw_a}.
\ee
Note that $Z_{id}^\rho(\bw)=0=\kappa_{id}^\rho(\bw)$ for all permutations $\rho\neq id$, while for $\rho=id$ we have
\be
\prod_{a<b}\frac{w_b-qw_a}{w_b-w_a}\cdot Z^{id}_{id}(\bw)=\prod_{a<b}\frac{w_b-qw_a}{w_b-w_a}\cdot\prod_{a<b}\frac{w_{b}-w_a}{w_{b}-qw_a}=1=\kappa^{id}_{id}(\bw).
\ee

For general $\pi$ we use the recurrent relations for $Z^\rho_\pi(\bw)$ and $\kappa^\rho_\pi(\bw)$. Assume that $l(\pi)<l(\pi\sigma_i)$ for some $i$. Then $\pi(i)<\pi(i+1)$, and we can apply the exchange relation to rows $i$ and $i+1$ in the partition function $Z^\rho_\pi(\bw)$, setting $k_1=\pi(i), k_2=\pi(i+1), x_1=w_{\rho(i+1)}, x_2=w_{\rho(i)}$ in \eqref{exchange} to get
\be
Z^\rho_{\pi\sigma_i}(\bw)=\frac{w_{\rho(i)}-qw_{\rho(i+1)}}{w_{\rho(i)}-w_{\rho(i+1)}}Z^{\rho\sigma_i}_\pi(\bw)-\frac{w_{\rho(i+1)}(1-q)}{w_{\rho(i)}-w_{\rho(i+1)}}Z^{\rho}_\pi(\bw).
\ee
Rescaling by $(-1)^{l(\pi)-l(\rho)+1}\prod_{a<b}\frac{w_b-qw_a}{w_b-w_a}$ we get
\be
\tilde{Z}^\rho_{\pi\sigma_i}(\bw)=\frac{w_{\rho(i+1)}(1-q)}{w_{\rho(i)}-w_{\rho(i+1)}}\tilde{Z}^{\rho}_\pi(\bw)+\frac{w_{\rho(i)}-qw_{\rho(i+1)}}{w_{\rho(i)}-w_{\rho(i+1)}}\tilde{Z}^{\rho\sigma_i}_\pi(\bw),
\ee
where $\tilde Z^\rho_\pi(\bw)$ denotes the right-hand side of the claim. This relation coincides with the recurrent relation \eqref{localZ} for the left-hand side of the claim, hence we can finish the proof by induction.
\end{proof}

The following statement summarizes several other properties of the coefficients $\kappa^\rho_\pi(\bw)$.
\begin{prop}
\label{Zprop}

1) $\kappa^\rho_\pi(\bw)$ is a rational function with at most simple poles at $w_i=w_j$ for $i<j$ and no other singularities. In other words, $\prod_{i<j}(w_j-w_i) \kappa^\rho_\pi(\bw)$ is a polynomial in $w_1,\dots, w_k$.

2) If for $a<b$ we have $\rho^{-1}(a)>\rho^{-1}(b)$ then $\restr{\kappa^\rho_\pi(\bw)}{\substack{w_a=z\\w_b=qz}}=0$.

3)  $\restr{\kappa^\rho_\pi(\bw)}{w_a=0}=0$ if $\pi(\rho^{-1}(a))<a$. Similarly, $\restr{\kappa^\rho_\pi(\bw)}{w_a\to\infty}=0$ if $\pi(\rho^{-1}(a))>a$.
\end{prop}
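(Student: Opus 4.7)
The plan is to prove all three properties simultaneously by induction on $l(\pi)$, using the recurrence \eqref{localZ}. The base case $\pi = id$ is trivial since $\kappa^\rho_{id} = \delta_{\rho,id}$ is a constant. For the inductive step, given $\pi' := \pi\sigma_i$ with $l(\pi') = l(\pi) + 1$, decompose the recurrence as $\kappa^\rho_{\pi'} = A\kappa^\rho_\pi + B\kappa^{\rho\sigma_i}_\pi$, where $A := \tfrac{(q-1)w_{\rho(i+1)}}{w_{\rho(i+1)}-w_{\rho(i)}}$ and $B := \tfrac{w_{\rho(i)}-qw_{\rho(i+1)}}{w_{\rho(i)}-w_{\rho(i+1)}}$. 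Both coefficients are rational in $w_{\rho(i)}, w_{\rho(i+1)}$ with only a simple pole at $w_{\rho(i)} = w_{\rho(i+1)}$ and finite limits at any $w_a \to 0$ or $w_a \to \infty$.

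For Part (2), I perform case analysis on $\rho^{-1}(a), \rho^{-1}(b)$ relative to $\{i, i+1\}$. In every case except $(\rho^{-1}(a), \rho^{-1}(b)) = (i+1, i)$, the inequality $(\rho\sigma_i)^{-1}(a) > (\rho\sigma_i)^{-1}(b)$ still holds, so both terms of the recurrence vanish at $w_a = z, w_b = qz$ by the induction hypothesis applied to $\pi$. In the exceptional case, $B$ specializes to $(qz - qz)/(qz - z) = 0$, killing the $\kappa^{\rho\sigma_i}_\pi$ term, while $\kappa^\rho_\pi$ still vanishes by induction (the condition on $\rho$ is unchanged). For Part (3), the key observation is that $l(\pi\sigma_i) = l(\pi) + 1$ forces $\pi(i) < \pi(i+1)$, which makes the inductive conditions compatible between the two terms. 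Case analysis on $j := \rho^{-1}(a)$: when $j \notin \{i, i+1\}$, both terms vanish directly; when $j = i$ (for $w_a \to 0$), the chain $\pi(i) < \pi(i+1) < a$ yields vanishing of both terms by induction; when $j = i+1$, $A$ vanishes at $w_a = 0$ while $(\rho\sigma_i)^{-1}(a) = i$ reduces the induction hypothesis for the second term to the given $\pi(i) < a$. The $w_a \to \infty$ statement is symmetric, now with $A \to 0$ in the case $j = i$ (and boundedness of $\kappa^\rho_\pi$ absorbing a possibly nonvanishing hypothesis).

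The main obstacle is Part (1), since the simple poles of $A, B$ at $w_{\rho(i)} = w_{\rho(i+1)}$ combine with the inductively allowed simple poles of $\kappa^\rho_\pi, \kappa^{\rho\sigma_i}_\pi$ at the same locus, threatening to produce a double pole. Setting $\epsilon := w_{\rho(i+1)} - w_{\rho(i)}$, the expansions $A = (q-1)w_{\rho(i)}/\epsilon + O(1)$ and $B = (q-1)w_{\rho(i)}/\epsilon + O(1)$ show that the potential $\epsilon^{-2}$ contribution to $\kappa^\rho_{\pi'}$ is proportional to $\mathrm{Res}_{\epsilon=0}(\kappa^\rho_\pi + \kappa^{\rho\sigma_i}_\pi)$. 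The plan is to show this residue vanishes via Proposition \ref{kappaZ}: since $l(\rho\sigma_i) = l(\rho) \pm 1$,
\[
\kappa^\rho_\pi + \kappa^{\rho\sigma_i}_\pi = (-1)^{l(\pi)-l(\rho)} \prod_{a<b}\frac{w_b - qw_a}{w_b - w_a}\bigl[Z^\rho_\pi - Z^{\rho\sigma_i}_\pi\bigr].
\]
At $w_{\rho(i)} = w_{\rho(i+1)}$, the partition functions $Z^\rho_\pi$ and $Z^{\rho\sigma_i}_\pi$ coincide, because they differ only by swapping the rapidities of rows $i$ and $i+1$, which is immaterial once those rapidities are equal. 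Hence $Z^\rho_\pi - Z^{\rho\sigma_i}_\pi$ vanishes to first order in $\epsilon$, cancelling the simple pole of the prefactor; the sum $\kappa^\rho_\pi + \kappa^{\rho\sigma_i}_\pi$ is therefore regular at $\epsilon = 0$, the residues cancel, and no double pole arises. All other potential singularities reduce to simple poles at $w_i = w_j$ directly from the inductive hypothesis multiplied by smooth $A, B$, and boundedness at $w_a \to 0, \infty$ propagates from finiteness of $A, B$ at those limits.
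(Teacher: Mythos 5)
Your proofs of Parts (2) and (3) follow essentially the same inductive argument as the paper: the recurrence \eqref{localZ}, the case analysis on $\rho^{-1}(a)$ (resp.\ $\rho^{-1}(a), \rho^{-1}(b)$) relative to $\{i,i+1\}$, the use of $\pi(i)<\pi(i+1)$ from $l(\pi\sigma_i)>l(\pi)$, and the observation that the coefficient $A$ or $B$ vanishes in the exceptional case. These parts are correct and match the paper.

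Where you diverge is Part (1). The paper proves it \emph{directly}: from Proposition \ref{kappaZ},
\[
\prod_{i<j}(w_j-w_i)\,\kappa^\rho_\pi(\bw)=(-1)^{l(\pi)-l(\rho)}\prod_{i<j}(w_j-qw_i)\,Z^\rho_\pi(\bw),
\]
and since the $R$-weights appearing in $Z^\rho_\pi$ have denominators vanishing only at $w_i=qw_j$, the right-hand side is regular at $w_i=w_j$, and the claim follows in one step. You instead attack Part (1) by induction on $l(\pi)$, identifying the dangerous $\epsilon^{-2}$ contribution at $w_{\rho(i)}=w_{\rho(i+1)}$ and showing its coefficient is $\mathrm{Res}_{\epsilon=0}(\kappa^\rho_\pi+\kappa^{\rho\sigma_i}_\pi)$, which you then argue vanishes by rewriting the sum via Proposition \ref{kappaZ} as a prefactor times $Z^\rho_\pi-Z^{\rho\sigma_i}_\pi$ and noting this difference is $O(\epsilon)$ (because swapping two equal rapidities is immaterial). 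Your cancellation argument is correct, and you were right to worry about and carefully track the potential double pole, including the signs: $l(\rho\sigma_i)=l(\rho)\pm 1$ flips the overall sign so the naive "difference of kappas" becomes a difference of $Z$'s. But this is a roundabout route: since you already invoke Proposition \ref{kappaZ} inside the inductive step, the direct non-inductive argument it affords would have been shorter and cleaner. That said, your inductive route does pick up the boundedness at $w_a\to 0,\infty$ along the way, which Part (3) tacitly uses; the paper leaves that to be read off from the structure of the Demazure--Lusztig operators (or from the partition-function representation), so your phrasing makes a small implicit dependency explicit.
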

\begin{proof}
1) First we will use the same argument as in Proposition \ref{kappaDef} to prove that $\kappa^\rho_\pi(\bw)$ is a rational function with possible poles at $w_i=w_j$ and no other singularities. Note that all coefficients in  \eqref{defT} are rational functions with possible poles at $w_i=w_j$ and no other singularities. Hence in \eqref{reducedT} all coefficients $\A_{j_1,\dots j_s}(\bw)$ are rational functions satisfying the same restriction on  singularities. Hence $\kappa^\rho_\pi(\bw)$ satisfies this property as well. 

To prove that $\kappa^{\rho}_\pi(\bw)$ can have only simple poles at $w_i=w_j$ observe that Proposition \ref{kappaZ} implies
\be
\prod_{i<j}(w_j-w_i) \kappa^\rho_\pi(\bw)=(-1)^{l(\pi)-l(\rho)}\prod_{i<j}(w_j-qw_i) Z^\rho_\pi(\bw).
\ee
The vertex weights participating in the partition function $Z^\rho_\pi(\bw)$ only can have singularities at $w_i=qw_j$, hence the righthand side of the equation above has no singularities at $w_i=w_j$. Thus $\prod_{i<j}(w_j-w_i) \kappa^\rho_\pi(\bw)$ also has no singularities at $w_i=w_j$ and we are done.

2) We use the recurrent relation \eqref{localZ} and induction on $l(\pi)$. For $\pi=id$ we have $\kappa^\rho_{id}(\bw)=0$ for all $\rho\neq id$, hence, if $\rho^{-1}(a)>\rho^{-1}(b)$ for some $a<b$ then $\kappa^{\rho}_{id}(\bw)=0$ and the claim follows.

To prove the step, assume that we have proved the claim for $\pi$ and we want to prove it for $\pi\sigma_{i}$ with $l(\pi)<l(\pi\sigma_i)$. Fix $a<b$ and take any permutation $\rho$ such that $\rho^{-1}(a)>\rho^{-1}(b)$. Then by \eqref{localZ} and the inductive hypothesis
\begin{multline*}
\restr{\kappa^\rho_{\pi\sigma_{i}}(\bw)}{\substack{w_a=z\\ w_b=qz}}=\restr{\(\frac{(q-1)w_{\rho(i+1)}}{w_{\rho(i+1)}-w_{\rho(i)}} \kappa^\rho_{\pi}(\bw)\)}{\substack{w_a=z\\ w_b=qz}}+\restr{\(\frac{w_{\rho(i)}-qw_{\rho(i+1)}}{w_{\rho(i)}-w_{\rho(i+1)}}\kappa^{\rho\sigma_{i}}_{\pi}(\bw)\)}{\substack{w_a=z\\w_b=qz}}\\
=\restr{\(\frac{w_{\rho(i)}-qw_{\rho(i+1)}}{w_{\rho(i)}-w_{\rho(i+1)}}\kappa^{\rho\sigma_{i}}_{\pi}(\bw)\)}{\substack{w_a=z\\w_b=qz}}.
\end{multline*}
If $\sigma_i(\rho^{-1}(a))>\sigma_i(\rho^{-1}(b))$ then we are done because $\restr{\kappa^{\rho\sigma_{i}}_{\pi}(\bw)}{\substack{w_a=z\\w_b=qz}}=0$ by the inductive hypothesis. On the other hand, $\sigma_i(\rho^{-1}(a))<\sigma_i(\rho^{-1}(b))$ is possible only if $\rho^{-1}(a)=i+1$ and $\rho^{-1}(b)=i$. But in this case we have
\be
\restr{\(\frac{w_{\rho(i)}-qw_{\rho(i+1)}}{w_{\rho(i)}-w_{\rho(i+1)}}\kappa^{\rho\sigma_{i}}_{\pi}(\bw)\)}{\substack{w_a=z\\w_b=qz}}=\restr{\(\frac{w_{b}-qw_{a}}{w_{b}-w_{a}}\kappa^{\rho\sigma_{i}}_{\pi}(\bw)\)}{\substack{w_a=z\\w_b=qz}}=0.
\ee
Thus, the claim holds for $\pi\sigma_i$ and the step follows.

3) The proof is again by induction on $l(\pi)$. The base is trivial because $\kappa_{id}^\rho(\bw)=0$ unless $\rho=id$.

To prove the step, it is enough to show that the claim for $\pi$ implies the claim for $\pi\sigma_i$ if $l(\pi)<l(\pi\sigma_i)$. First consider the first part of the claim and take $\rho$ such that $\pi\sigma_i(\rho^{-1}(a))<a$. Then by \eqref{localZ} we have
\be
\restr{\kappa^\rho_{\pi\sigma_{i}}(\bw)}{w_a=0}=\restr{\(\frac{(q-1)w_{\rho(i+1)}}{w_{\rho(i+1)}-w_{\rho(i)}} \kappa^\rho_{\pi}(\bw)\)}{w_a=0}+\restr{\(\frac{w_{\rho(i)}-qw_{\rho(i+1)}}{w_{\rho(i)}-w_{\rho(i+1)}}\kappa^{\rho\sigma_{i}}_{\pi}(\bw)\)}{w_a=0}.
\ee
Note that $(\pi\sigma_i)\rho^{-1}=\pi(\rho\sigma_i)^{-1}$, so by the inductive hypothesis $\restr{\kappa^{\rho\sigma_{i}}_{\pi}(\bw)}{w_a=0}=0$ and it is enough to prove that 
\be
\restr{\(\frac{(q-1)w_{\rho(i+1)}}{w_{\rho(i+1)}-w_{\rho(i)}} \kappa^\rho_{\pi}(\bw)\)}{w_a=0}=0.
\ee

We have three cases. If $\rho^{-1}(a)\notin \{i,i+1\}$, then $\pi(\rho^{-1}(a))=\pi\sigma_i(\rho^{-1}(a))<a$ and we have $\restr{\kappa^{\rho}_{\pi}(\bw)}{w_a=0}=0$, finishing the proof in this case. If $\rho^{-1}(a)=i$ we have
\be
\pi(\rho^{-1}(a))=\pi(i)<\pi(i+1)=\pi\sigma_i(\rho^{-1}(a))<a,
\ee
where for the first inequality we have used that $l(\pi)<l(\pi\sigma_i)$. This implies $\restr{\kappa^{\rho}_{\pi}(\bw)}{w_a=0}=0$ and we are done. Finally, if $\rho^{-1}(a)=i+1$ then 
\be
\restr{\(\frac{(q-1)w_{\rho(i+1)}}{w_{\rho(i+1)}-w_{\rho(i)}} \kappa^\rho_{\pi}(\bw)\)}{w_a=0}=\restr{\(\frac{(q-1)w_{a}}{w_{a}-w_{\rho(i)}} \kappa^\rho_{\pi}(\bw)\)}{w_a=0}=0.
\ee

The proof of the inductive step for the second part of the claim is identical, with the only difference being the cases $\rho^{-1}(a)=i,i+1$. For the case $\rho^{-1}(a)=i$ note that
\be
\restr{\frac{(q-1)w_{\rho(i+1)}}{w_{\rho(i+1)}-w_a}}{w_a\to\infty}=0
\ee
and for the case $\rho^{-1}(a)=i+1$ we have
\be
\pi(\rho^{-1}(a))=\pi(i+1)>\pi(i)=\pi\sigma_i(\rho^{-1}(a))>a,
\ee
hence $\restr{\kappa^\rho_\pi(\bw)}{w_a\to\infty}=0$ by induction.
\end{proof}

\begin{rem}
\label{coefrem}
In the context of the stochastic colored six-vertex model, the coefficients $\kappa^\rho_\pi(\bz)$ have first appeared in \cite[Chapter 6]{BW18}. There they are described in terms of \emph{permutation graphs} or, equivalently, in terms of the \emph{SC6V model on an arbitrary wiring diagram}: instead of vertical and horizontal lines constituting a skew domain, one considers an arbitrary wiring diagram with intersections of lines serving as vertices. Similarly to the SC6V model on a skew domain, boundary conditions and rapidities assigned to each line define weights for possible configurations of the model. The coefficient $\kappa_\pi^\rho(\bz)$ corresponds to the partition function for a wiring digram with shape $\pi$, rapidities $\bz$ and boundary conditions determined by $\rho$. An exact matching of our notation with \cite{BW18} can be obtained by comparing Proposition \ref{kappaZ} with \cite[Proposition 6.7.6]{BW18}.
\end{rem}

\section{Iterated contour integrals involving the Hecke algebra action} 
\label{integrals}
For an ordered collection of contours $\Gamma=(\Gamma_1,\dots, \Gamma_k)$ and for functions $\Phi(\bw)=\Phi(w_1,\dots, w_k)$ and $\Psi(\bw)=\Psi(w_1,\dots, w_k)$ set
 \be
 \langle \Phi(\bw), \Psi(\bw) \rangle_\Gamma^k=\oint_{w_1\in\Gamma_1}\dots\oint_{w_k\in\Gamma_k}\prod_{a<b}\frac{w_b-w_a}{w_b-qw_a}\Phi(w_1, \dots, w_k)\Psi(w_1,\dots, w_k)\prod_{a=1}^k\frac{dw_a}{2\pi \i w_a}.
 \ee

In view of our main results, the expressions of the form $\langle T_\pi\Phi(\bw),\Psi(\bw)\rangle$ constitute one of the significant pieces of the current work. Working with such iterated integrals turns out to be challenging and our approach follows the following outline: In this section we directly compute $\langle T_\pi\Phi(\bw),\Psi(\bw)\rangle$ for a certain choice of $\Phi$ and $\Psi$ when each integral has one nonzero residue either outside or inside the contour. This computation serves as the initial point for the argument in the later sections, where we will reach the desired integral representations using recurrence relations and avoiding any additional computation of the whole iterated integral.

 We start by specifying the family of contours $\Gamma=(\Gamma_1,\dots, \Gamma_k)$ often used throughout the remainder of the text. Assume that $\bz=(\z_1,\dots,\z_N)$ is a family of parameters such that $q\z_i\neq\z_j$ for every $1\leq i,j\leq N$. Let $\Gamma[\bz^{-1}]$ be a union of small simple contours encircling $\z_i^{-1}$ for every $i$, while $0$ and the points $\{q^{-1}\z^{-1}_i\}_i$ are outside of $\Gamma[\bz^{-1}]$. Additionally we assume that the contours forming $\Gamma[\bz^{-1}]$ are small enough so that they do not encircle any points from $q^{\pm 1}\Gamma[\bz^{-1}]$. Let $c_0$ be a small circle around $0$ such that all the points  $\{\z_i^{-1}\}_i, \{q^{-1}\z^{-1}_i\}_i$ are outside of $c_0$. Define $\Gamma[k|\bz^{-1}]$ as a union of $\Gamma[\bz^{-1}]$ and $q^{2k}c_0$. For the rest of this section, unless stated otherwise, we set $\Gamma=(\Gamma[1|\bz^{-1}],\dots,\Gamma[k|\bz^{-1}])$.

 \begin{prop}[{\cite[Proposition 8.1.3]{BW18}}]
 \label{selfadjoint}
Assume that the functions $\Phi(\bw)$ and $\Psi(\bw)$ have no singularities at $w_\alpha=\lambda w_\beta$ for any fixed $\lambda$. Then for any $\pi\in S_k$ we have
 \be
  \langle T_\pi\Phi(\bw), \Psi(\bw) \rangle_\Gamma^k=\langle \Phi(\bw),T_{\pi^{-1}}\Psi(\bw) \rangle_\Gamma^k.
 \ee
 \end{prop}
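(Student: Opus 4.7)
I would reduce to a single simple transposition and prove the base case $\langle T_i\Phi,\Psi\rangle_\Gamma^k=\langle\Phi,T_i\Psi\rangle_\Gamma^k$ (using $\sigma_i^{-1}=\sigma_i$) by a direct symmetrization-plus-contour-swap argument. The reduction itself is straightforward: given a reduced decomposition $\pi=\sigma_{i_1}\cdots\sigma_{i_l}$, one has $T_\pi=T_{i_1}\cdots T_{i_l}$ and $T_{\pi^{-1}}=T_{i_l}\cdots T_{i_1}$, and peeling off one factor at a time produces the general identity once one verifies that each intermediate function $T_{i_j}\cdots T_{i_l}\Phi$ is still free of singularities along the diagonals $w_\alpha=\lambda w_\beta$. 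The only factor in $T_j$ that could create such a singularity is $1/(w_{j+1}-w_j)$, but the combination $(q-1)w_{j+1}\Phi+(w_{j+1}-qw_j)(t_j\Phi)$ vanishes identically at $w_j=w_{j+1}$, so the apparent pole is removable.

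\textbf{Base case: algebraic cancellation.} From the explicit formula for $T_i$ one gets
\[
(T_i\Phi)\Psi-\Phi(T_i\Psi)=\frac{w_{i+1}-qw_i}{w_{i+1}-w_i}\bigl[(t_i\Phi)\Psi-\Phi(t_i\Psi)\bigr].
\]
Multiplied against the kernel $\prod_{a<b}(w_b-w_a)/(w_b-qw_a)$, the factor for the pair $(i,i+1)$ cancels with $(w_{i+1}-qw_i)/(w_{i+1}-w_i)$, leaving
\[
M^{\mathrm{sym}}(\bw):=\prod_{\substack{a<b\\(a,b)\neq(i,i+1)}}\frac{w_b-w_a}{w_b-qw_a},
\]
which, together with $\prod_a dw_a/w_a$, is symmetric in $w_i,w_{i+1}$. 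Thus $\langle T_i\Phi,\Psi\rangle_\Gamma^k-\langle\Phi,T_i\Psi\rangle_\Gamma^k$ is the integral of $M^{\mathrm{sym}}\bigl[(t_i\Phi)\Psi-\Phi(t_i\Psi)\bigr]$ against the symmetric measure.

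\textbf{Variable swap and contour deformation.} In the first summand I would substitute $w_i\leftrightarrow w_{i+1}$; the symmetric pieces are left invariant while $(t_i\Phi)\Psi$ is turned into $\Phi(t_i\Psi)$, producing exactly the integrand of the second summand. The only residual effect is that the contours $\Gamma_i=\Gamma[i|\bz^{-1}]$ and $\Gamma_{i+1}=\Gamma[i+1|\bz^{-1}]$ have been exchanged, so it suffices to deform $\Gamma_{i+1}$ into $\Gamma_i$ (and vice versa) without crossing any pole of the new integrand. After the $(i,i+1)$ cancellation, the singularities of that integrand in $w_i$ are the pole at $w_i=0$ (trapped inside every small circle around $0$), the residual $M^{\mathrm{sym}}$-poles at $w_i=qw_a$ for $a<i$ and $w_i=w_b/q$ for $b>i+1$, and the fixed singularities of $\Phi,\Psi,t_i\Phi,t_i\Psi$, which by hypothesis avoid the diagonals and by construction of the contours lie away from the small annulus near $0$. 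For the explicit family $\Gamma[a|\bz^{-1}]=\Gamma[\bz^{-1}]\cup q^{2a}c_0$ the two contours differ only in the radius of the inner circle, and an elementary radius estimate — using $w_a\in\Gamma_a$ to place $qw_a$ strictly outside and $w_b/q$ strictly inside the annulus bounded by $q^{2i}c_0$ and $q^{2(i+1)}c_0$ — shows no pole is swept across. The same check goes through for any $q$-nested contour family of the kind hypothesized in the theorem.

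\textbf{Main obstacle.} The algebra of the base case is entirely routine; the genuine content is the contour-deformation verification. One must track, pole by pole, which singularities can lie between the consecutive small circles $q^{2i}c_0$ and $q^{2(i+1)}c_0$, and it is precisely here that both the $q$-nested structure of the contours and the hypothesis that $\Phi,\Psi$ have no singularities along $w_\alpha=\lambda w_\beta$ enter essentially.
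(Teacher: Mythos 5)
Your argument is correct and follows essentially the same route as the paper's proof: reduce to a single transposition, cancel the $(i,i+1)$-factor of the cross-kernel against the rational coefficient of $t_i$ in $T_i$, and then exchange the contours $\Gamma_i$ and $\Gamma_{i+1}$, which is allowed precisely because after the cancellation no singularities of the form $w_i = \lambda w_{i+1}$ remain. The only (minor) difference is that you swap variables first and then deform the contours, while the paper exchanges contours first and then substitutes $\tilde\bw = \sigma_i.\bw$; you also spell out the reduction step — that $T_j$ maps diagonal-pole-free functions to diagonal-pole-free functions via the removability of $w_j = w_{j+1}$ — which the paper leaves implicit with a bare "it is enough to consider $\pi = \sigma_i$."
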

 \begin{proof}
 It is enough to consider the case $\pi=\sigma_{[i,i+1]}$ for $1\leq i<k$. Recall that
 \be
 T_{\sigma_{[i,i+1]}}=T_i=\frac{(q-1)w_{i+1}}{w_{i+1}-w_i}+\frac{w_{i+1}-qw_i}{w_{i+1}-w_i}t_i.
 \ee
From the definition of $\langle\cdot,\cdot\rangle_\Gamma^k$ it is clear that
 \begin{equation}
 \label{selfadjpart1}
 \left \langle \frac{(q-1)w_{i+1}}{w_{i+1}-w_i} \Phi, \Psi \right\rangle_\Gamma^k=\left\langle \Phi, \frac{(q-1)w_{i+1}}{w_{i+1}-w_i}\Psi \right\rangle_\Gamma^k.
 \end{equation}
 On the other hand, note that the function
 \be
\(\prod_{a<b}\frac{w_b-w_a}{w_b-qw_a} \)\frac{w_{i+1}-qw_i}{w_{i+1}-w_i}\Phi(\sigma_i.\bw)\Psi(\bw)
 \ee
 has no singularities at $w_i=\lambda w_{i+1}$, so in the integral
  \be
\oint_{\Gamma[1|\bz^{-1}]}\dots\oint_{\Gamma[k|\bz^{-1}]}\(\prod_{\alpha<\beta}\frac{w_\beta-w_\alpha}{w_\beta-qw_\alpha}\)\frac{w_{i+1}-qw_i}{w_{i+1}-w_i}\Phi(\sigma_i.\bw)\Psi(\bw)\prod_{\alpha=1}^k\frac{dw_\alpha}{2\pi iw_\alpha}
 \ee
 we can exchange the contours $\Gamma[i|\bz^{-1}]$ and $\Gamma[i+1|\bz^{-1}]$ without crossing any singularities of the integrand. Changing variables $\tilde \bw:=\sigma_i. \bw$ we arrive at
 \begin{equation}
  \label{selfadjpart2}
 \left \langle \frac{w_{i+1}-qw_i}{w_{i+1}-w_i} t_i \Phi(\bw), \Psi(\bw) \right\rangle_\Gamma^k=\left \langle \frac{w_{i+1}-qw_i}{w_{i+1}-w_i} \Phi(\sigma_i.\bw), \Psi(\bw) \right\rangle_{\sigma_i\Gamma}^k=\left\langle \Phi(\widetilde{\bw}), \frac{\tilde w_{i+1}-q\tilde w_i}{\tilde w_{i+1}-\tilde w_i} t_i \Psi(\widetilde{\bw})\right\rangle_\Gamma^k.
 \end{equation}
The proof is finished by combining \eqref{selfadjpart1} and \eqref{selfadjpart2}.
 \end{proof}

To state the main result of this section we use the ramp function denoted by $\R$ and defined by
\be
\R(a):=\max(a,0).
\ee

\begin{prop} Let $(f_i)_{i=1}^k$ and $(l_i)_{i=1}^k$ be $k$-tuples of integers such that
\be
 f_1\geq f_2\geq \dots \geq f_k\geq 0, \qquad\qquad 0\leq l_1\leq l_2\leq\dots\leq l_k.
\ee 
Then for any permutation $\pi\in S_k$ we have
\label{base}
\begin{multline*}
q^{\R(f_{\pi(1)}-l_1)+\R(f_{\pi(2)}-l_2)+\dots+\R(f_{\pi(k)}-l_k)}= q^{\frac{k(k-1)}{2}-l(\pi)} \oint_{\Gamma[1|\bz^{-1}]}\cdots\oint_{\Gamma[k|\bz^{-1}]}\prod_{a<b}\frac{w_b-w_a}{w_b-qw_a}\\
T_\pi\left( \prod_{a=1}^k\prod_{j=1}^{l_{a}}\frac{1-\z_jw_a}{1-q\z_iw_a}\right)\cdot\prod_{a=1}^k\prod_{j=1}^{f_a}\frac{1-q\z_jw_a}{1-\z_jw_a}\prod_{a=1}^k\frac{dw_a}{2\pi i w_a}.
\end{multline*}
\end{prop}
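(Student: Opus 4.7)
I would argue by induction on the length $l(\pi)$, with base case $\pi=\mathrm{id}$.

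\emph{Base case.} For $\pi=\mathrm{id}$, $T_\pi$ is the identity and the claim reduces to the direct evaluation of an iterated contour integral. Writing the $a$th factor as
$F_a(w):=\prod_{j=1}^{l_a}\tfrac{1-\z_jw}{1-q\z_jw}\prod_{j=1}^{f_a}\tfrac{1-q\z_jw}{1-\z_jw}$,
cancellation gives either $\prod_{j=l_a+1}^{f_a}(1-q\z_jw)/(1-\z_jw)$ when $f_a\ge l_a$ (with poles $\{\z_j^{-1}\}$ inside $\Gamma[a|\bz^{-1}]$), or $\prod_{j=f_a+1}^{l_a}(1-\z_jw)/(1-q\z_jw)$ when $f_a<l_a$ (with poles $\{q^{-1}\z_j^{-1}\}$ outside the contour, thanks to $q\z_i\ne\z_j$). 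I would then integrate out the variables in the order $w_k,w_{k-1},\ldots,w_1$ using residue-at-infinity. The hypothesis $q\z_i\ne\z_j$ and the q-nested structure of the contours together ensure that the outer poles $\{q^{-1}\z_j^{-1}\}$ and the coupling poles $w_a=q^{-1}w_b$ (for $b<a$) all lie outside $\Gamma[a|\bz^{-1}]$, so only the prescribed inside residues and the residue at infinity enter the residue-theorem identity. The $w_a\to\infty$ behavior is $q^{f_a-l_a-(k-a)}/w_a$: the factor $F_a$ contributes $q^{f_a-l_a}$ when $f_a\ge l_a$ (or $1$ via the residue at $w_a=0$ when $f_a<l_a$), and each of the $k-a$ couplings $(w_b-w_a)/(w_b-qw_a)$ with $b>a$ contributes $1/q$. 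Accumulating the $-(k-a)$ factors over $a$ produces the $q^{-k(k-1)/2}$ normalization, and the $q^{\R(f_a-l_a)}$ factors reproduce the LHS.

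\emph{Inductive step.} Suppose the claim is known for all permutations of length $<l(\pi)$. Choose $i$ with $l(\pi\sigma_i)=l(\pi)-1$ and set $\pi':=\pi\sigma_i$, so $T_\pi=T_{\pi'}T_i$. Applying $T_i$ to $\Phi(\bw):=\prod_a\prod_{j=1}^{l_a}(1-\z_jw_a)/(1-q\z_jw_a)$ and using $l_i\le l_{i+1}$ to factor out the common prefix at positions $a=i,i+1$, one reduces $T_i\Phi$ to $\Phi$ times an elementary rational function of $w_i,w_{i+1}$ that is regular at $w_i=w_{i+1}$. Alternatively, Proposition \ref{selfadjoint} lets one rewrite the pairing as $\langle\Phi,T_{\pi^{-1}}\Psi\rangle_\Gamma^k$ and apply $T_{\sigma_i}$ on the $\Psi$-side, using $f_{\pi(i)}\ge f_{\pi(i+1)}$ analogously. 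Either way, the resulting integral is identified via the inductive hypothesis for $\pi'$ (with rearranged data at positions $i,i+1$) with $q^{\sum_a\R(f_{\pi'(a)}-l_a)}\cdot q^{l(\pi')-l(\pi)}$. A short case analysis on the relative order of $f_{\pi(i)},f_{\pi(i+1)}$ with respect to $l_i,l_{i+1}$ (noting $l(\pi')=l(\pi)-1$ forces $\pi(i)>\pi(i+1)$) matches this with the LHS for $\pi$.

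\emph{Main obstacle.} The principal difficulty is in the base case: the coupling $\prod_{a<b}(w_b-w_a)/(w_b-qw_a)$ entangles the $k$ variables so that iterated residues must be computed in a prescribed order, and the exclusion of the outer poles $\{q^{-1}\z_j^{-1}\}$ and the coupling poles $\{q^{-1}w_b\}$ from $\Gamma[a|\bz^{-1}]$ depends critically on both the assumption $q\z_i\ne\z_j$ and the q-nested structure. In the inductive step, matching the specific rational shift produced by $T_i$ with the piecewise-linear change of $\sum_a\R(f_{\pi(a)}-l_a)$ under swapping $\pi(i)\leftrightarrow\pi(i+1)$ is a tedious but routine case check.
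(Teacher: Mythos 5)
Your overall strategy (induction, peeling off variables by residues at $0$ or at $\infty$) shares ingredients with the paper's argument, but the organizing scheme is genuinely different: you induct on $l(\pi)$ with a fully explicit base case $\pi=\mathrm{id}$, whereas the paper inducts on the number $k$ of variables (with trivial base $k=0$) and treats a general $\pi$ at every step by decomposing it as $\tilde\pi\,\sigma^-_{[j,k]}$ or $\sigma^-_{[1,j]}\,\tilde\pi$. Unfortunately your scheme has two gaps that I do not see how to close without effectively reverting to the paper's route.

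\emph{Base case.} Your statement that the coupling poles ``all lie outside $\Gamma[a|\bz^{-1}]$, so only the prescribed inside residues and the residue at infinity enter'' is not correct, and this is precisely where the technical content lives. If you integrate $w_k$ by exterior residues, the exterior poles include $w_k=q w_a$ for $a<k$; if you integrate $w_1$ by exterior residues, the exterior poles include $w_1=q^{-1}w_b$ with $w_b\in\Gamma[\bz^{-1}]$. ``Lying outside'' makes a pole contribute to the exterior-residue computation, not disappear from it. In the paper these contributions are killed by a second integration: after taking $\res_{w_1=q^{-1}w_a}$, the resulting integrand has no poles in $w_a$ inside $\Gamma[\bz^{-1}]$ (last hypothesis of Lemma~\ref{computation-outside}), so the whole contribution vanishes. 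Your sketch omits this two-step vanishing argument entirely, and it is not ``routine'' — the required holomorphy of $\Psi$ restricted to $w_a=q^{-1}z,\,w_b=z$ is a nontrivial structural property that needs to be verified and propagated under $T_i$ (see the inductive step inside Lemma~\ref{computation-outside}). Moreover mixing interior residues (for $f_a<l_a$) with exterior residues (for $f_a\ge l_a$) forces you to integrate different variables from different ends ($w_k$ at $0$, $w_1$ at $\infty$), which is exactly the case split $f_k<l_1$ versus $l_1\le f_k$ that organizes the paper's proof; a single order $w_k,w_{k-1},\dots,w_1$ with ``residue at infinity'' does not cover both regimes.

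\emph{Inductive step.} The reduction ``$T_i\Phi$ equals $\Phi$ times an elementary rational function of $w_i,w_{i+1}$'' is false. Writing $\Phi=S\cdot H(w_{i+1})$ with $S$ symmetric in $w_i,w_{i+1}$ and $H(w)=\prod_{j=l_i+1}^{l_{i+1}}\tfrac{1-\z_jw}{1-q\z_jw}$, you get $T_i\Phi=S\cdot T_i[H(w_{i+1})]$, and $T_i[H(w_{i+1})]$ is not a scalar multiple of $H(w_{i+1})$; in particular $T_i\Phi$ is no longer a product $\prod_a\prod_{j=1}^{l'_a}\tfrac{1-\z_jw_a}{1-q\z_jw_a}$ for any choice of integers $l'_a$, so the inductive hypothesis (which requires $\Phi$ of that precise form) cannot be invoked for $\pi'$. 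The arithmetic check also fails: with $\pi'=\pi\sigma_i$ and $\pi(i)>\pi(i+1)$, the quantity $\sum_a\R(f_{\pi(a)}-l_a)-\sum_a\R(f_{\pi'(a)}-l_a)$ is not identically $l(\pi')-l(\pi)=-1$ (for instance it is $0$ whenever $f_{\pi(i)}\ge l_{i+1}$), so the normalization $q^{l(\pi')-l(\pi)}$ you propose cannot be right. The paper avoids the problem by never trying to remove a single $T_i$: instead the full cyclic block $T_{\sigma^-_{[j,k]}}$ (resp.\ $T_{\sigma^+_{[1,j]}}$) is absorbed into the computation of a \emph{single} contour integral via Lemma~\ref{computation-inside} (resp.\ \ref{computation-outside}), after which one is back to the same type of pairing in one fewer variable.
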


The proof is based on two lemmas. Recall that $\sigma^{+}_{[a,b]}$ and $\sigma^{-}_{[a,b]}$ denote cycles in $S_k$, and we define the action of $S_k$ on $k$-tuples by
\be
\pi.\bw=\pi.(w_1,\dots, w_k)=(w_{\pi^{-1}(1)},\dots, w_{\pi^{-1}(k)}).
\ee 

\begin{lem}
\label{computation-inside}
Suppose that a collection of contours $\Gamma_1, \dots, \Gamma_k$  and rational functions 
\be
\Phi(\bw)=\Phi(w_1, \dots, w_k),\quad \Psi(\bw)=\Psi(w_1, \dots, w_k)
\ee
satisfy:

--For any fixed points $w_1,\dots, w_{k-1}$ on the contours $\Gamma_1, \dots, \Gamma_{k-1}$ respectively the function $\Psi(\bw)$ is holomorphic in a neighbourhood of the interior of $\Gamma_k$ as a function of $w_k$,

--Function $\Phi(\bw)$ is holomorphic in $w_1, \dots, w_k$ inside $U^k$ for a certain neighbourhood $U$ of the interior of $\Gamma_k$.

-- $\Gamma_k$ encircles $0$ and encircles no points from $q\Gamma_i$ for $1\leq i\leq k-1$.

Then for any $1\leq j<k$ we have
\begin{multline*}
\oint_{\Gamma_1}\cdots\oint_{\Gamma_k}\prod_{1\leq a<b\leq k}\frac{w_b-w_a}{w_b-qw_a}\(T_{\sigma^-_{[k-j,k]}}\Phi(\bw)\right)\Psi(\bw)\prod_{a=1}^k\frac{dw_a}{2\pi i w_a}\\
=q^{j-k+1}\oint_{\Gamma_1}\cdots\oint_{\Gamma_{k-1}}\prod_{1\leq a<b\leq k-1}\frac{w_b-w_a}{w_b-qw_a}\ \restr{\(\Phi({\sigma^+_{[k-j,k]}}. \bw)\Psi(\bw)\right)}{w_k=0}\prod_{a=1}^{k-1}\frac{dw_a}{2\pi i w_a}
\end{multline*}
or, in other words, we have
\be
\left\langle T_{\sigma^{-}_{[k-j,k]}}\Phi(\bw),\Psi(\bw)\right\rangle_\Gamma^k=q^{j-k+1} \left\langle \restr{\Phi(\sigma^+_{[k-j,k]}.\bw)}{w_k=0},\restr{\Psi(\bw)}{w_k=0}\right\rangle_{\tilde\Gamma}^{k-1},
\ee
where $\tilde\Gamma=(\Gamma_1,\dots, \Gamma_{k-1})$.
\end{lem}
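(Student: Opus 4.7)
The plan is to induct on $j$, proving the base case $j=1$ by a direct residue computation at $w_k=0$ and handling the inductive step through the reduced-word identity
\[
\sigma^-_{[k-j,k]}=\sigma^-_{[k-j+1,k]}\,\sigma_{k-j},\qquad T_{\sigma^-_{[k-j,k]}}=T_{\sigma^-_{[k-j+1,k]}}\,T_{k-j}.
\]

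For the base case $j=1$, I would integrate in $w_k$ first. The only $w_k$-dependent factors are $\prod_{a<k}\tfrac{w_k-w_a}{w_k-qw_a}$, $(T_{k-1}\Phi)(\bw)\Psi(\bw)$ and $\tfrac{1}{w_k}$. By the assumption on $\Gamma_k$ the points $\{qw_a\}_{a<k}$ lie outside $\Gamma_k$; by hypothesis $\Psi$ is holomorphic inside $\Gamma_k$ and $\Phi$ is holomorphic on $U^k$; and the apparent simple pole of $T_{k-1}\Phi$ at $w_k=w_{k-1}$ is removable since $(q-1)w_{k-1}\Phi(\bw)+(w_{k-1}-qw_{k-1})\Phi(\sigma_{k-1}.\bw)$ vanishes there. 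Hence $w_k=0$ is the unique pole inside $\Gamma_k$. Evaluation at $w_k=0$ yields $\prod_{a<k}\tfrac{-w_a}{-qw_a}=q^{-(k-1)}$ together with $(T_{k-1}\Phi)(\bw)|_{w_k=0}=q\,\Phi(\sigma_{k-1}.\bw)|_{w_k=0}$, producing the overall prefactor $q^{2-k}=q^{j-k+1}$ and the remaining $(k{-}1)$-dimensional integral over $\tilde\Gamma$, matching the claim (since $\sigma^+_{[k-1,k]}=\sigma_{k-1}$).

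For the inductive step, set $\widehat\Phi:=T_{k-j}\Phi$. Since $T_{k-j}$ does not involve $w_k$ and introduces only a removable pole at $w_{k-j+1}=w_{k-j}$, the function $\widehat\Phi$ continues to satisfy all hypotheses of the lemma; thus the inductive hypothesis for $j-1$ applied to $\widehat\Phi$ gives
\[
\big\langle T_{\sigma^-_{[k-j+1,k]}}\widehat\Phi,\Psi\big\rangle^k_\Gamma=q^{j-k}\,\Big\langle \widehat\Phi(\sigma^+_{[k-j+1,k]}.\bw)\big|_{w_k=0},\;\Psi\big|_{w_k=0}\Big\rangle^{k-1}_{\tilde\Gamma}.
\]
To finish, I would check the algebraic identity
\[
(T_{k-j}\Phi)(\sigma^+_{[k-j+1,k]}.\bw)\big|_{w_k=0}=q\,\Phi(\sigma^+_{[k-j,k]}.\bw)\big|_{w_k=0}.
\]
Writing $\bv:=\sigma^+_{[k-j+1,k]}.\bw$ one has $v_{k-j}=w_{k-j}$ and $v_{k-j+1}=w_k$, so specialising $w_k=0$ kills the first term of $T_{k-j}$ while the coefficient of the second reduces to $q$; the resulting tuple $\sigma_{k-j}.\bv|_{w_k=0}$ coincides with $\sigma^+_{[k-j,k]}.\bw|_{w_k=0}$ by the direct factorisation $\sigma^+_{[k-j,k]}=\sigma_{k-j}\sigma^+_{[k-j+1,k]}$. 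Multiplying the two powers of $q$ gives the desired $q^{j-k+1}$, closing the induction.

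The main obstacle is not computational but bookkeeping: one must carefully verify that the singularities produced by successive applications of $T_i$ inside $\Gamma_k$ are \emph{removable} rather than genuine (so that no stray residues appear besides the one at $w_k=0$), and that the holomorphy hypotheses on $\Phi$ and $\Psi$ propagate through the induction. Once these are confirmed, the composition identity $\sigma^+_{[k-j,k]}=\sigma_{k-j}\sigma^+_{[k-j+1,k]}$ matches the algebra to the combinatorics cleanly.
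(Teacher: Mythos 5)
Your proof is correct and follows essentially the same route as the paper's: both compute the $w_k$-integral by taking the residue at $w_k=0$ (after observing that the singularities introduced by the Demazure--Lusztig operators are removable, so $T_\pi\Phi$ has no poles inside $\Gamma_k$), and both hinge on the specialization $\restr{T_iF(\bw)}{w_{i+1}=0}=q\,t_i\restr{F(\bw)}{w_i=0}$. The paper simply iterates this identity directly over $i=k-1,\dots,k-j$, whereas you package the same computation as an induction on $j$; the content is identical.
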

\begin{proof}
We will take the integral with respect to $w_k$ in the left-hand side. Since $\Phi(\bw)$ is holomorphic for $w_1,\dots, w_k$ inside $\Gamma_k$, for any $\pi$ the function $T_\pi \Phi(\bw)$ has no singularities inside $\Gamma_k$ as a function of $w_k$. Hence the integrand, as a function of $w_k$, has only one non-vanishing residue which is at $w_k=0$.

To compute the residue at ${w_k=0}$ note that taking $w_{i+1}=0$ in $T_i$ we get
\be
\restr{T_{i}F(\bw)}{w_{i+1}=0}=\restr{(q+q(t_{i}-1))F(\bw)}{w_{i+1}=0}=qt_i\(\restr{F(\bw)}{w_{i}=0}\).
\ee
Repeating for $i=k-1,\dots, k-j$ we get
\begin{multline*}
\restr{T_{k-1}\dots T_{k-j}\Phi(\bw)}{w_k=0}=q\restr{t_{k-1}T_{k-2}\dots T_{k-j}\Phi(\bw)}{w_k=0}=\cdots=q^j\restr{t_{k-1}\dots t_{k-j}\Phi(\bw)}{w_k=0}\\
=q^j\restr{\Phi(\sigma^+_{[k-j,k]}.\bw)}{w_k=0}.
\end{multline*}
Additionally, we have
\be
\restr{\prod_{1\leq a<b\leq k}\frac{w_b-w_a}{w_b-qw_a}}{w_k=0}=q^{-k+1}\prod_{1\leq a< b\leq k-1}\frac{w_b-w_a}{w_b-qw_a}.
\ee
The claim follows by multiplying the last two equalities.
\end{proof}

\begin{lem}
\label{computation-outside}
Suppose that a pair of functions 
\be
\Phi(\bw)=\Phi(w_1, \dots, w_k),\quad \Psi(\bw)=\Psi(w_2, \dots, w_k)
\ee
satisfies:

-- The function $\Phi(\bw)$ does not depend on $w_1$.

-- The function $\Psi(\bw)$ is holomorphic in $U^k$ for a neighbourhood $U$ of the exterior of $\Gamma[\bz^{-1}]$. Moreover, $\Psi(\bw)$ has a finite limit at infinity with respect to any variable $w_i$.

-- For any $a\neq b$ the function $\Psi(\bw)$ has no singularity at $w_b=qw_a$, that is, $\restr{\Psi(\bw)}{w_b=qw_a}$ is a well-defined rational function.

-- For any $a< b$ and for any fixed $w_1, \dots, \widehat{w_a}, \dots, \widehat{w_b},\dots, w_k$ not in the interior of $\Gamma[\bz^{-1}]$ the function $\restr{\Psi(\bw)}{w_a=q^{-1}z, w_b=z}$ is holomorphic with respect to $z$ inside a neighbourhood of the interior of $\Gamma[\bz^{-1}]$.

Then for any $0\leq j<k$ we have
\begin{multline*}
\oint_{\Gamma[1|\bz^{-1}]}\cdots\oint_{\Gamma[k|\bz^{-1}]}\prod_{1\leq a<b\le k}\frac{w_b-w_a}{w_b-qw_a}\Phi(\bw)\(T_{\sigma^+_{[1,j]}}\Psi(\bw)\)\prod_{a=1}^k\frac{dw_a}{2\pi i w_a}\\
=q^{j-k+1}\oint_{\Gamma[2|\bz^{-1}]}\cdots\oint_{\Gamma[k|\bz^{-1}]}\prod_{2\leq a<b\le k}\frac{w_b-w_a}{w_b-qw_a}\restr{\(\Phi(\bw)\Psi(\sigma^-_{[1,j]}.\bw)\right)}{w_1\to\infty}\prod_{a=2}^{k}\frac{dw_a}{2\pi i w_a}
\end{multline*}
or, in other words, we have
\be
\left\langle \Phi(\bw),T_{\sigma^{+}_{[1,j]}}\Psi(\bw)\right\rangle_\Gamma^k=q^{j-k+1} \left\langle \restr{\Phi(\bw)}{w_1\to\infty},\restr{\Psi(\sigma^-_{[1,j]}.\bw)}{w_1\to\infty}\right\rangle_{\widetilde\Gamma}^{k-1},
\ee
where $\widetilde\Gamma=(\Gamma_2,\dots,\Gamma_k)$
\end{lem}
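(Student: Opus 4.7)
The plan is to integrate with respect to $w_1$ first, mirroring the proof of Lemma \ref{computation-inside} with the residue at $w_k=0$ there replaced by the residue at $w_1=\infty$ here. Using $\tfrac{(q-1)w_{i+1}}{w_{i+1}-w_i}\to 0$ and $\tfrac{w_{i+1}-qw_i}{w_{i+1}-w_i}\to q$ as $w_i\to\infty$, I first establish the analogue
$$\restr{T_iF(\bw)}{w_i\to\infty}=qt_i\(\restr{F(\bw)}{w_i\to\infty}\),$$
from which iteration along the reduced decomposition of $T_{\sigma^+_{[1,j]}}$ yields
$$\restr{T_{\sigma^+_{[1,j]}}\Psi(\bw)}{w_1\to\infty}=q^{l(\sigma^+_{[1,j]})}\restr{\Psi(\sigma^-_{[1,j]}.\bw)}{w_1\to\infty}.$$
Combined with the direct evaluation $\restr{\prod_{1\le a<b\le k}\tfrac{w_b-w_a}{w_b-qw_a}}{w_1\to\infty}=q^{1-k}\prod_{2\le a<b\le k}\tfrac{w_b-w_a}{w_b-qw_a}$, this identifies both the integrand on the right-hand side and the overall power of $q$.

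The main step is then to reduce $\oint_{\Gamma_1}[\cdots]\tfrac{dw_1}{2\pi iw_1}$ to the residue at $w_1=\infty$. My approach is to deform $\Gamma_1$ outward to a large circle $|w_1|=R$; since the integrand (absent the measure factor $1/w_1$) has a finite limit at infinity, $\oint_{|w_1|=R}[\text{integrand}]\tfrac{dw_1}{2\pi iw_1}\to\restr{[\text{integrand}]}{w_1\to\infty}$ as $R\to\infty$. It then remains to control the residues crossed during the deformation.

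The candidate singularities of the integrand in $w_1$ lying strictly outside $\Gamma_1$ are the poles $w_1=w_b/q$ of the prefactor $\prod_{a<b}(w_b-w_a)/(w_b-qw_a)$, occurring for $b\ge 2$ with $w_b$ on the $\Gamma[\bz^{-1}]$-component of $\Gamma_b$ (so that $w_b/q$ lies near $q^{-1}\z_i^{-1}$, outside $\Gamma_1$). Expanding $T_{\sigma^+_{[1,j]}}\Psi(\bw)=\sum_\rho\kappa^\rho_{\sigma^+_{[1,j]}}(\bw)\Psi(\rho^{-1}.\bw)$ via Proposition \ref{kappaDef}, I control the residue at each such pole summand by summand: the fourth assumption on $\Psi$ ensures $\restr{\Psi(\rho^{-1}.\bw)}{w_a=q^{-1}z,w_b=z}$ is holomorphic in $z$ when $\rho^{-1}(a)<\rho^{-1}(b)$, while item (2) of Proposition \ref{Zprop} forces $\restr{\kappa^\rho_{\sigma^+_{[1,j]}}(\bw)}{w_a=z,w_b=qz}=0$ when $\rho^{-1}(a)>\rho^{-1}(b)$. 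After taking the residue, the result is a rational function of $w_b$ with no pole at $w_b=0$, so its subsequent integration over the $\Gamma[\bz^{-1}]$-component of $\Gamma_b$, which does not enclose the origin, vanishes.

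The main technical obstacle is precisely this bookkeeping — matching the summands in $T_{\sigma^+_{[1,j]}}\Psi$ against each potential crossing $w_1=w_b/q$, and verifying the vanishing of the result upon integration in $w_b$. Once established, the contour deformation yields $\oint_{\Gamma_1}[\cdots]\tfrac{dw_1}{2\pi iw_1}=\restr{[\text{integrand}]}{w_1\to\infty}$, and the remaining integrations over $\widetilde\Gamma=(\Gamma_2,\ldots,\Gamma_k)$ produce exactly the right-hand side of the claim.
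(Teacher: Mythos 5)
Your plan diverges from the paper's in one important structural way. The paper first proves the $j=0$ case by the residue argument you outline, and then handles $j>0$ \emph{inductively}: it verifies that $T_i\Psi$ again satisfies all four hypotheses imposed on $\Psi$ (the verification of the fourth hypothesis for $T_i\Psi$ is the bulk of the proof for $j>0$), and then deduces the claim for $j$ from the claim for $j-1$ applied to $T_{j-1}\Psi$. That avoids expanding $T_{\sigma^+_{[1,j]}}\Psi$ into a $\kappa^\rho$-sum entirely. Your route instead expands into $\kappa^\rho$-summands via Proposition~\ref{kappaDef} and invokes Proposition~\ref{Zprop}(2) to kill individual terms at $w_1=q^{-1}w_b$. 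That is a legitimate alternative in spirit, but it transfers the difficulty into precisely the bookkeeping you flag at the end and do not carry out: after the residue in $w_1$, the prefactor $\prod_{c\neq b}\frac{w_c-q^{-1}w_b}{w_c-w_b}\prod_{2\le c<d}\frac{w_d-w_c}{w_d-qw_c}$ and the evaluated coefficient $\kappa^\rho_{\sigma^+_{[1,j]}}(q^{-1}w_b,w_2,\dots)$ both can contribute additional $w_b$-poles, and one has to check that none of them land inside $\Gamma[\bz^{-1}]$. The paper's inductive route sidesteps all of this.

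More importantly, the final step of your argument is incorrect as stated. You write that the residue ``is a rational function of $w_b$ with no pole at $w_b=0$, so its subsequent integration over the $\Gamma[\bz^{-1}]$-component of $\Gamma_b$, which does not enclose the origin, vanishes.'' The origin is irrelevant here: $\Gamma[\bz^{-1}]$ is a union of small loops around the points $\z_i^{-1}$, so what must be checked is that the residue is holomorphic \emph{near the $\z_i^{-1}$, i.e., inside $\Gamma[\bz^{-1}]$}. This is exactly what the fourth hypothesis on $\Psi$ supplies, and you correctly cite it two sentences earlier, but the justification you actually give for the vanishing appeals to the wrong geometry. A smaller slip: the intermediate identity $\restr{T_iF(\bw)}{w_i\to\infty}=qt_i\bigl(\restr{F(\bw)}{w_i\to\infty}\bigr)$ does not hold literally (taking the limit and applying $t_i$ do not commute); the correct analogue of the paper's $\restr{T_iF}{w_{i+1}=0}=qt_i\restr{F}{w_i=0}$ is $\restr{T_iF}{w_i\to\infty}=qt_i\restr{F}{w_{i+1}\to\infty}$, or equivalently $\restr{T_iF}{w_i\to\infty}=q\restr{t_iF}{w_i\to\infty}$. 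Your eventual iterated formula $\restr{T_{\sigma^+_{[1,j]}}\Psi}{w_1\to\infty}=q^{l(\sigma^+_{[1,j]})}\restr{\Psi(\sigma^-_{[1,j]}.\bw)}{w_1\to\infty}$ is nevertheless correct.
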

\begin{proof}
First we will prove the claim for $j=0$, that is, $T_{\sigma^+_{[1,j]}}=id$. Similarly to Lemma \ref{computation-inside} we are computing the integral with respect to $w_1$ in the left-hand side, taking residues outside of $\Gamma[1|\bz^{-1}]$ this time. We have a residue at $w_1=\infty$, which is equal to the right-hand side of the claim, so we need to prove that all other residues outside of $\Gamma[1|\bz^{-1}]$ give no contribution to the integral.

Note that due to the conditions on the functions, the only nonzero residues with respect to $w_1$ outside of $\Gamma[1|\bz^{-1}]$ are $\res_{w_1=q^{-1}w_a}$ for $a\geq 2$. Due to the geometry of the contours, $q^{-1}w_a$ is outside of $\Gamma[1|\bz^{-1}]$ only for $w_a\in\Gamma[\bz^{-1}]$, so the contribution of $\res_{w_1=q^{-1}w_a}$ is equal to
\be
(1-q^{-1})\oint\cdots\oint\prod_{\substack{2\leq b\leq k \\ b\neq a}}\frac{w_b-q^{-1}w_a}{w_b-w_a}\prod_{2\leq b<c\leq k}\frac{w_c-w_b}{w_c-qw_b}\Phi(\bw)\Psi\(\restr{\bw}{w_1=q^{-1}w_a}\)\prod_{b=2}^k\frac{dw_b}{2\pi i w_b},
\ee
where the integral with respect to $w_a$ is taken over the $\Gamma[\bz^{-1}]$ and for $b\neq a$ the integral with respect to $w_b$ is over $\Gamma[b|\bz^{-1}]$. By the last condition on $\Psi(\bw)$ applied to $w_1$ and $w_a$, the integral with respect to $w_a$ has no nonzero residues inside $\Gamma[\bz^{-1}]$, so the whole integral vanishes, as desired. Hence the contribution of $\res_{w_1=q^{-1}w_a}$ in the computation of the integral with respect to $w_1$ vanishes, which finishes the proof of the case $j=0$.

For general $j>0$ we use induction on $j$, with the base case $j=0$. To prove the step we claim that $T_i\Psi(\bw)$ also satisfies the conditions on the function $\Psi$ from the statement of the lemma. Indeed, the only  condition which does not readily follow is the last one: the function
\be
\restr{T_i\Psi(\bw)}{w_a=q^{-1}z, w_b=z}=\restr{\(q\Psi(\bw)+\frac{w_{i+1}-qw_i}{w_{i+1}-w_i}(t_i-1)\Psi(\bw)\)}{w_a=q^{-1}z, w_b=z}
\ee
is holomorphic inside $\Gamma[\bz^{-1}]$. To prove this last condition, note that it is enough to show that the function 
\be
\restr{\(\frac{w_{i+1}-qw_i}{w_{i+1}-w_i}(t_i-1)\Psi(\bw)\)}{w_a=q^{-1}z, w_b=z}=\restr{\(\frac{w_{i+1}-qw_i}{w_{i+1}-w_i}(\Psi(\sigma_i.\bw)-\Psi(\bw))\)}{w_a=q^{-1}z, w_b=z}
\ee
has no residues inside $\Gamma[\bz^{-1}]$. Due to the assumptions on $\Psi(\bw)$, the function $\restr{\Psi(\sigma_i.\bw)}{w_a=q^{-1}z, w_b=z}$ is holomorphic inside $\Gamma[\bz^{-1}]$ as long as $\sigma_i(a)<\sigma_i(b)$. Then if $w_a, w_b$ do not coincide with $w_i, w_{i+1}$  we are immediately done. Otherwise, say, we have $b=i$, $a\neq i+1$. Then both functions $\restr{\(\frac{1}{w_{i+1}-w_b}\Psi(\sigma_i.\bw)\)}{w_a=q^{-1}z, w_b=z}$ and $\restr{\(\frac{1}{w_{i+1}-w_b}\Psi(\bw)\)}{w_a=q^{-1}z, w_b=z}$ have no poles inside $\Gamma[\bz^{-1}]$, except for $z=w_{i+1}$. But for $z=w_{i+1}=w_i$ we have $\Psi(\sigma_i .\bw)-\Psi(\bw)=0$, so the residue at $z=w_{i+1}$ vanishes and we are done. Other cases when $\sigma_i(a)<\sigma_i(b)$ are treated similarly.

The only remaining case is $a=i, b=i+1$, when we have
\be
\restr{\(\frac{w_{i+1}-qw_i}{w_{i+1}-w_i}(t_i-1)\Psi(\bw)\)}{\substack{w_i=q^{-1}z,\\ w_{i+1}=z}}=\restr{\(\frac{z-z}{z-q^{-1}z}(t_i-1)\Psi(\bw)\)}{\substack{w_i=q^{-1}z,\\ w_{i+1}=z}} =0.
\ee
Thus the pair of functions $\Phi(\bw),T_i\Psi(\bw)$ satisfies the conditions of the lemma.

Finally, note that
\begin{multline*}
\restr{t_1\dots t_{j-1}T_j\Psi(\bw)}{w_1\to\infty}\\
=\restr{\(\(\frac{(q-1)w_{j+1}}{w_{j+1}-w_1}\)t_1\dots t_{j-1}\Psi(\bw)+\frac{w_{j+1}-qw_1}{w_{j+1}-w_1}t_1\dots t_j\Psi(\bw)\)}{w_1\to\infty}\\
=q\restr{t_1\dots t_j\Psi(\bw)}{w_1\to\infty},
\end{multline*}
so, replacing $\Psi(\bw)$ by $T_j\Psi(\bw)$ in the claim of the lemma for $j-1$, we get the claim of the lemma for $j$.
\end{proof}

\begin{proof}[Proof of Proposition \ref{base}]
We proceed by induction, with the base case $k=0$ being trivial. 

Denote the integral in the right-hand side of the claim of Proposition \ref{base} by 
\be
\mathcal I(\pi, f, l):=q^{\frac{k(k-1)}{2}-l(\pi)}\left\langle T_{\pi}\left( \prod_{a=1}^k\prod_{j\geq 1}^{l_{a}}\frac{1-\z_jw_a}{1-q\z_iw_a}\right), \(\prod_{a=1}^{k}\prod_{j\geq 1}^{f_a}\frac{1-q\z_jw_a}{1-\z_jw_a} \)\right\rangle_\Gamma^k.
\ee
 We have two cases:

\paragraph{\bfseries Case 1.} Assume first that $0<f_k<l_1$. Let $j:=\pi^{-1}(k)$ and $\tilde\pi=\pi\sigma^+_{[j,k]}$, so that $\tilde\pi(k)=k$. Moreover, $l(\pi)=l(\tilde\pi)+l(\sigma^-_{[j,k]})$, so $T_\pi=T_{\tilde\pi}T_{\sigma^-_{[j,k]}}$. Hence by Proposition \ref{selfadjoint} we have
\be
\mathcal I(\pi, f, l)=q^{\frac{k(k-1)}{2}-l(\pi)}\left\langle T_{\sigma^-_{[j,k]}}\left( \prod_{a=1}^k\prod_{j>f_k}^{l_{a}}\frac{1-\z_jw_a}{1-q\z_iw_a}\right), T_{\tilde\pi^{-1}}\(\prod_{a=1}^{k}\prod_{j>f_k}^{f_a}\frac{1-q\z_jw_a}{1-\z_jw_a} \)\right\rangle_\Gamma^k,
\ee
where we have multiplied the left and right parts of $\langle\cdot, \cdot\rangle$ by $\(\prod_{a}\prod_{j=1}^{f_k}\frac{1-q\z_jw_a}{1-\z_jw_a}\)^{\pm 1}$ (note that the multiplication by a factor symmetric in $\bw$ commutes with the action of $T_\pi$). 

Now we can apply Lemma \ref{computation-inside} for 
\be
\Phi(\bw)=\prod_{a=1}^k\prod_{j>f_k}^{l_{a}}\frac{1-\z_jw_a}{1-q\z_iw_a},\quad \Psi(\bw)=T_{\tilde\pi^{-1}}\(\prod_{a=1}^{k}\prod_{j>f_k}^{f_a}\frac{1-q\z_jw_a}{1-\z_jw_a}\).
\ee
Noting that 
\be
\restr{t_{\sigma^-_{[j,k]}}\prod_{a=1}^k\prod_{j>f_k}^{l_{a}}\frac{1-\z_jw_a}{1-q\z_iw_a}}{w_k=0}=\restr{\prod_{a=1}^k\prod_{j>f_k}^{l_{\sigma^+_{[j,k]}(a)}}\frac{1-\z_jw_{a}}{1-q\z_iw_{a}}}{w_k=0}=\prod_{a=1}^{k-1}\prod_{j>f_k}^{l_{{\sigma^+_{[j,k]}}(a)}}\frac{1-\z_jw_a}{1-q\z_iw_a},
\ee
Lemma \ref{computation-inside} gives
\begin{multline*}
\mathcal I(\pi, f, l)= q^{\frac{k(k-1)}{2}-l(\pi)-k+j+1} \left\langle \restr{t_{\sigma^-_{[j,k]}}\prod_{a=1}^k\prod_{j>f_k}^{l_{a}}\frac{1-\z_jw_a}{1-q\z_iw_a}}{w_k=0};\restr{T_{\tilde\pi^{-1}}\(\prod_{a=1}^{k}\prod_{j>f_k}^{f_a}\frac{1-q\z_jw_a}{1-\z_jw_a}\)}{w_k=0} \right\rangle_{\tilde\Gamma}^{k-1}\\
=q^{\frac{(k-1)(k-2)}{2}-l(\tilde\pi)} \left\langle \prod_{a=1}^{k-1}\prod_{j>f_k}^{l_{{\sigma^+_{[j,k]}}(a)}}\frac{1-\z_jw_a}{1-q\z_iw_a};T_{\tilde\pi^{-1}}\(\prod_{a=1}^{k-1}\prod_{j>f_k}^{f_a}\frac{1-q\z_jw_a}{1-\z_jw_a}\) \right\rangle_{\tilde\Gamma}^{k-1}=\mathcal I(\tilde\pi, \tilde f,  \tilde l),
\end{multline*}
where $\tilde \Gamma=(\Gamma[1|\bz^{-1}],\dots, \Gamma[k-1|\bz^{-1}])$ and
\be
\tilde f=(f_1,\dots f_{k-1}),\qquad \tilde l=\(l_{\sigma^+_{[j,n]}(1)},\dots l_{\sigma^+_{[j,n]}(k-1)}\).
\ee
Hence
\be
\mathcal I(\pi, f, l)=\mathcal I(\tilde\pi, \tilde f, \tilde l)=q^{\sum_{a=1}^{k-1} \R\(f_{\tilde\pi(a)}-l_{\sigma^+_{[j,k]}(a)}\)}=q^{\sum_{a\neq j}^k \R(f_{\pi(a)}-l_{a})}=q^{\sum_{a=1}^k \R(f_{\pi(a)}-l_{a})},
\ee
where in the last equality we have used $f_k<l_1\leq l_j$ to get $\R(f_{\pi(j)}-l_j)=\R(f_{k}-l_j)=0$.

\paragraph{\bfseries Case 2.} Assume now that $l_1\leq f_k$. Similarly to the previous case, let $\pi=\sigma^-_{[1,j]}\tilde\pi$ for $j:=\pi(1)$ and a permutation $\tilde\pi$ s.t. $\tilde\pi(1)=1$. Then we have
\be
\mathcal I(\pi, f, l)=q^{\frac{k(k-1)}{2}-l(\pi)}\left\langle T_{\tilde\pi}\left( \prod_{a=1}^k\prod_{j>l_1}^{l_{a}}\frac{1-\z_jw_a}{1-q\z_iw_a}\right), T_{\sigma^+_{[1,j]}}\(\prod_{a=1}^{k}\prod_{j>l_1}^{f_a}\frac{1-q\z_jw_a}{1-\z_jw_a}\) \right\rangle_\Gamma^k,
\ee
where we have used Proposition \ref{selfadjoint} and multiplied the left and right sides of $\langle\cdot, \cdot\rangle$ by $\(\prod_{a}\prod_{j=1}^{l_1}\frac{1-\z_jw_a}{1-q\z_jw_a}\)^{\pm 1}$. Now we can use Lemma \ref{computation-outside} for 
\be
\Phi(\bw)=T_{\tilde\pi}\left( \prod_{a=1}^k\prod_{j>l_1}^{l_{a}}\frac{1-\z_jw_a}{1-q\z_iw_a}\right),\quad \Psi(\bw)=\(\prod_{a=1}^{k}\prod_{j>l_1}^{f_a}\frac{1-q\z_jw_a}{1-\z_jw_a}\).
\ee
Note that 
\be
\restr{\prod_{j>l}^f\frac{1-q\z_jw}{1-\z_jw}}{w\to\infty}=q^{f-l},
\ee
so after applying Lemma \ref{computation-outside} we obtain
\begin{multline*}
\mathcal I(\pi, f, l)= q^{\frac{(k-1)(k-2)}{2}-l(\tilde\pi)} \left\langle T_{\tilde\pi}\left( \prod_{a=2}^{k}\prod_{j>l_1}^{l_{a}}\frac{1-\z_jw_a}{1-q\z_iw_a}\right), q^{f_j-l_1}\(\prod_{a=2}^{k}\prod_{j>l_1}^{f_{\sigma^-_{[1,j]}(a)}}\frac{1-q\z_jw_a}{1-\z_jw_a}\) \right\rangle_{\tilde\Gamma}^{k-1}\\
\\
=q^{f_j-l_1}\mathcal I(\tilde\pi, \tilde f,  \tilde l),
\end{multline*}
where $\tilde\Gamma=(\Gamma_2,\dots,\Gamma_k)$ and
\be
\tilde f=(f_{\sigma^-_{[1,j]}(2)},\dots f_{\sigma^-_{[1,j]}(k)}),\qquad\tilde l=(l_2,\dots,l_k).
\ee
Hence,
\be
\mathcal I(\pi, f, l)=q^{f_j-l_1}\mathcal I(\tilde\pi, \tilde f,  \tilde l)=q^{f_j-l_1+\sum_{\alpha=2}^{k} \R(f_{\sigma^-_{[1,j]}\tilde\pi(\alpha)}-l_{\alpha})}=q^{\sum_{\alpha=1}^k \R(f_{\pi(\alpha)}-l_{\alpha})}
\ee
where we have used $l_1\leq f_k\leq f_j$ to get $f_j-l_1=\R(f_j-l_1)$.
\end{proof}

 \begin{rem}
The contours $\Gamma_i$ are called \emph{$q$-nested} if the interior of the contour $\Gamma_i$ contains the contour $q^{-1}\Gamma_j$ for any $i<j$. In view of this terminology, we can refer to the family of contours $\{\Gamma[i|\bz^{-1}]\}_i$ as the family of sufficiently small contours encircling $\z_i^{-1}$ for all $i$, $q$-nested around $0$ and encircling no other singularity of an integrand. 
 
Instead of $q$-nestedness only around $0$, we could have considered the $q$-nested contours encircling $0$, $\bz^{-1}$ and no other singularities. All results above hold for this family of contours as well, moreover, some of the proofs become less involved. But, since the integrands we consider have singularities at points $q^{-1}\z^{-1}_i$, the existence and an explicit construction of a family of $q$-nested contours having $\z_i^{-1}$ inside and $q^{-1}\z_i^{-1}$ outside is much less trivial than for the contours $\Gamma[i|\bz^{-1}]$.
 \end{rem}

\section{Local relation}

\label{LocalRelationSec}

In this section we describe a certain recurrence relation between special observables of a SC6V model expressed in terms of the height functions. In the following text we extensively use the powers of $q$, so, to simplify such expressions, set
\be
\exp_q(x):=q^x.
\ee
We also use the following notation for the height functions:
\begin{equation}
\label{defH}
\H_{(c_1, c_2, \dots, c_k)}^{\nu_1\nu_2,\dots \nu_k}(\Sigma)=\H_{\c}^{(\A,\B)}(\Sigma):=h_{> c_1}^{(\alpha_1,\beta_1)}(\Sigma)+\dots+h_{> c_k}^{(\alpha_k,\beta_k)}(\Sigma),
\end{equation}
where $\Sigma$ is a configuration of a SC6V model, $\nu_1=(\alpha_1,\beta_1), \dots, \nu_k=(\alpha_k,\beta_k)\in\ddZ$ are points of the dual lattice and $\A=(\alpha_1,\dots, \alpha_k), \B=(\beta_1,\dots, \beta_k), \c=(c_1,\dots, c_k)$. Let $[\nu_1]^{a_1}\dots [\nu_t]^{a_t}$ denote the sequence of points starting with $\nu_1$ repeated $a_1$ times, then $\nu_2$ taken $a_2$ times and so on. 

The relation, which we call the \emph{local relation}, is essentially a statement about the values of the height functions around a single vertex. But, for the further use, we phrase it in terms of configurations on a skew domain of a SC6V model. Let $P-Q$ be a skew domain, and $v$ be a vertex inside it. Define $P_{\swarrow v}$ as the maximal up-left path $Q\leq P_{\swarrow v}\leq P$ such that $v$ is not inside the skew domain $P_{\swarrow v}-Q$. We can describe this path explicitly: assume that $P_i\in\ddZ$ are the points lying on the path $P$, so that
\be
P=P_0\to\dots\to P_l.
\ee
Let $P_i\to P_{i+1}$ be the step crossing the row of $v$, while $P_j\to P_{j+1}$ be the step of $P$ crossing the column containing $v$. Then the path $P_{\swarrow v}$ coincides with $P$ until it reaches $P_i$, then it goes to the left until it crosses the column containing $v$ and goes up until $P_{j+1}$, when it continues along $P$. Alternatively, we can characterize the path $P_{\swarrow v}$ by the skew domain $P_{\swarrow v}-Q$, which is obtained from $P-Q$ by keeping only the vertices below or to the left of $v$. 

For a configuration $\Sigma$ on a skew domain $P-Q$ and a vertex $v$ inside the skew domain $P-Q$ let $\Sigma_{\swarrow v}$ denote the restriction of the configuration $\Sigma$ to the skew domain $P_{\swarrow v}-Q$.
Alternatively, $\Sigma_{\swarrow v}$ is the restriction to the edges and vertices below or to the left of $v$.  

\begin{prop} 
\label{pLocalRelation} 
Consider a SC6V model $\M$ on a skew domain $P-Q$.  Let $v$ be a vertex inside $P-Q$ with a spectral parameter $z$. Then for any collection of colors $c_1\leq\dots\leq c_r$ and any configuration $\Sigma^0$ of $\M$ we have
\begin{multline}
\label{LocalRelation}
\E\[\exp_q\({\H_{(c_1,\dots, c_r)}^{[v_{\mnnearrow}]^r}(\Sigma)}\)\Big|\Sigma_{\swarrow v}=\Sigma^0_{\swarrow v}\]=\frac{q-q^rz}{q-z}\exp_q\({\H_{(c_1,\dots, c_r)}^{[v_{\mnsearrow}]^r}(\Sigma^0)}\)\\
+\frac{qz-1}{q-z}\sum_{i=0}^{r-1}\exp_q\({i+\H_{(c_1,\dots, c_r)}^{[v_{\mnsearrow}]^iv_{\mnswarrow}[v_{\mnsearrow}]^{r-i+1}}(\Sigma^0)}\)
+\frac{1-z}{q-z}\sum_{i=0}^{r-1}\exp_q\({i+\H_{(c_1,\dots, c_r)}^{[v_{\mnsearrow}]^iv_{\mnnwarrow}[v_{\mnsearrow}]^{r-i+1}}(\Sigma^0)}\)
\end{multline}
where the expectation is taken with respect to the model $\M$ and for $v=(a,b)\in\Z^2$ we set 
\begin{align*} 
v_{\mnnearrow}=\(a+\frac{1}{2},b+\frac{1}{2}\),\quad v_{\mnsearrow}=\(a+\frac{1}{2},b-\frac{1}{2}\),\\
v_{\mnswarrow}=\(a-\frac{1}{2},b-\frac{1}{2}\),\quad v_{\mnnwarrow}=\(a-\frac{1}{2},b+\frac{1}{2}\).
\end{align*}
\end{prop}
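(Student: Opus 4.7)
The plan is to condition on $\Sigma_{\swarrow v} = \Sigma^0_{\swarrow v}$ and reduce \eqref{LocalRelation} to an algebraic identity involving a single-vertex update. Let $i_v, j_v$ denote the (deterministic) colors of the bottom and left incoming edges of $v$ in $\Sigma^0$. Then $h^{v_{\mnsearrow}}_{>c}(\Sigma)$, $h^{v_{\mnswarrow}}_{>c}(\Sigma)$, and $h^{v_{\mnnwarrow}}_{>c}(\Sigma)$ all count paths in edges belonging to the subdomain $P_{\swarrow v}-Q$, so they equal their $\Sigma^0$-values; the only random quantity influencing the left-hand side is $l_v$, the color of the right outgoing edge of $v$, whose conditional law on $\{0, i_v, j_v\}$ is the marginal dictated by \eqref{weightsR} with spectral parameter $z$. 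A single application of \eqref{localH2} yields $h^{v_{\mnnearrow}}_{>c}(\Sigma) = h^{v_{\mnsearrow}}_{>c}(\Sigma^0) + \1_{l_v > c}$, so, writing $N(l) := \#\{a : l > c_a\}$, the left-hand side of \eqref{LocalRelation} equals $q^{\H^{[v_{\mnsearrow}]^r}_\c(\Sigma^0)}\,\E\bigl[q^{N(l_v)} \bigm| i_v, j_v\bigr]$.

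The right-hand side is handled analogously. The local relations \eqref{localH1}--\eqref{localH2} around the elementary square at $v$ give $h^{v_{\mnswarrow}}_{>c} - h^{v_{\mnsearrow}}_{>c} = \1_{i_v>c}$ and $h^{v_{\mnnwarrow}}_{>c} - h^{v_{\mnsearrow}}_{>c} = \1_{i_v>c} + \1_{j_v>c}$, so every $\H$-exponent appearing on the right-hand side differs from $\H^{[v_{\mnsearrow}]^r}_\c(\Sigma^0)$ by a deterministic function of $(i_v, j_v, \c)$. After cancelling the common factor $q^{\H^{[v_{\mnsearrow}]^r}_\c(\Sigma^0)}$, the proposition reduces to the scalar identity
\be
\E\bigl[q^{N(l_v)} \bigm| i_v, j_v\bigr] = \frac{q-q^r z}{q-z} + \frac{qz-1}{q-z}\sum_{a=1}^r q^{a-1+\1_{i_v>c_a}} + \frac{1-z}{q-z}\sum_{a=1}^r q^{a-1+\1_{i_v>c_a}+\1_{j_v>c_a}},
\ee
to be checked for every pair $(i_v, j_v)$ and every weakly increasing sequence $\c$.

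To verify the identity I expand $q^{\1_{i_v>c_a}} = 1 + (q-1)\1_{i_v>c_a}$ (and similarly for $j_v$), and exploit the key consequence of the monotonicity $c_1 \le \cdots \le c_r$ that $\1_{i_v>c_a}\1_{j_v>c_a} = \1_{a \le m(\min(i_v,j_v))}$, where $m(\kappa) := \#\{a : c_a < \kappa\}$. Geometric summation in $a$ then collapses the right-hand side into a $\mathbb{C}(z)$-linear combination of $1$, $q^{m(i_v)}$, $q^{m(j_v)}$, and $q^{m(\min(i_v,j_v))}$. The identity is then verified case by case on the six configurations of $(i_v, j_v)$ dictated by \eqref{weightsR}: namely $(0,0)$, $(0,j)$, $(i,0)$, $(i,j)$ with $i<j$, $(j,i)$ with $i<j$, and $(j,j)$; in each case the law of $l_v$ is a two-atom distribution (or a point mass) with weights read off from \eqref{weightsR}, and after multiplying through by $q - z$ each case reduces to an elementary polynomial identity of degree at most $1$ in $z$. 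The main obstacle is precisely this case analysis: no single case is hard, but the rational coefficients $\frac{q-q^r z}{q-z}$, $\frac{qz-1}{q-z}$, $\frac{1-z}{q-z}$ are rigidly constrained by simultaneously having to sum (against $\sum_a q^{a-1}$) to $1$ in the trivial case $(0,0)$ and to match the vertex-weight numerators $z-1$, $q(z-1)$, $1-q$, and $z(1-q)$ across the five nontrivial configurations.
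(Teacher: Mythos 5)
Your reduction to the scalar identity
\be
\E\bigl[q^{N(l_v)} \mid i_v, j_v\bigr] = \frac{q-q^r z}{q-z} + \frac{qz-1}{q-z}\sum_{a=1}^r q^{a-1+\1_{i_v>c_a}} + \frac{1-z}{q-z}\sum_{a=1}^r q^{a-1+\1_{i_v>c_a}+\1_{j_v>c_a}}
\ee
is exactly the reduction the paper performs (this is \eqref{reducedLocalRelation}), and both arrive there by the same use of \eqref{localH1}--\eqref{localH2} to peel off the deterministic contribution $q^{\H^{[v_{\mnsearrow}]^r}_{\c}(\Sigma^0)}$. Where you diverge is in the verification of this scalar identity: the paper proves it by induction on $r$, checking only the incremental identity \eqref{RelationStep} in four cases determined by the position of $c_{r+1}$ relative to $i_v$ and $j_v$; you instead carry out a direct geometric summation, using $q^{\1_{i_v>c_a}} = 1+(q-1)\1_{i_v>c_a}$ and the monotonicity-driven observation $\1_{i_v>c_a}\1_{j_v>c_a}=\1_{a\le m(\min(i_v,j_v))}$ to collapse the two sums to a $\mathbb{C}(z)$-linear combination of $1$, $q^{m(i_v)}$, $q^{m(j_v)}$, $q^{m(\min(i_v,j_v))}$ (with the apparent $q^r$-terms cancelling identically, which is worth saying explicitly), and then match coefficients against the vertex-weight table. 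Both routes are valid. The paper's induction isolates the dependence on $r$ so that each check is a one-step calculation; your approach is non-inductive and makes the cancellation structure of the three rational prefactors more visible, at the cost of a somewhat heavier one-shot computation. Your six-case split on $(i_v,j_v)$ is safe but redundant: since the cases with a $0$-color are obtained from the generic $i<j$ or $j>i$ cases by specializing $m(0)=0$, three cases (both equal; bottom smaller; left smaller) suffice once $q^{m(i_v)}$ and $q^{m(j_v)}$ are treated as free.
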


\begin{proof}
Let $\Sigma$ be a configuration satisfying $\Sigma_{\swarrow v}=\Sigma^0_{\swarrow v}$. Note that the values of the height functions at the points $v_{\mnsearrow}, v_{\mnswarrow}, v_{\mnnwarrow}$ are determined by $\Sigma^0_{\swarrow v}$, while $h_{> c}^{v_{\mnnearrow}}(\Sigma)$ depends only on $\Sigma^0_{\swarrow v}$ and the configuration around the vertex $v$. More explicitly, let $i,j$ denote the colors of the incoming vertical and horizontal edges of $v$, while $k,l$ denote the colors of the outgoing edges in the configuration $\Sigma$. Note that both $i$ and $j$ are determined by $\Sigma^0_{\swarrow v}$. Then, the recurrence relations \eqref{localH1}, \eqref{localH2} give 
\begin{equation}
\label{hRelation}
h_{> c}^{v_{\mnnearrow}}(\Sigma)=h_{> c}^{v_\mnsearrow}(\Sigma) + \1_{l> c};\qquad h_{> c}^{v_{\mnswarrow}}(\Sigma)=h_{> c}^{v_\mnsearrow}(\Sigma) + \1_{i> c}; \qquad h_{> c}^{v_{\mnnwarrow}}(\Sigma)=h_{> c}^{v_\mnswarrow}(\Sigma) + \1_{j> c}.
\end{equation}

\begin{figure}
\label{localVertex}
\begin{tikzpicture}[scale=1.5,baseline={([yshift=0]current bounding box.center)},>=stealth]
\draw[lgray,line width=1.5pt, ->] (-1.3, 0) -- (1, 0);
\draw[lgray,line width=1.5pt, ->] (0, -1.3) -- (0, 1);

\draw[style=dual] (-1, 1.5) -- (-1, -1) -- (1.5, -1);

\node[] at (0.5,0.5) {\large $v_{\mnnearrow}$};
\node[] at (0.5,-0.5) {\large $v_{\mnsearrow}$};
\node[] at (-0.5,0.5) {\large $v_{\mnnwarrow}$};
\node[] at (-0.5,-0.5) {\large $v_{\mnswarrow}$};

\node[above] at (-0.8, 0) {\small $j$};
\node[left] at (0,-0.8) {\small $i$};
\node[above] at (0.9,0) {\small $l$};
\node[left] at (0,0.9) {\small $k$};

\node[] at (-1.5,-1) {\huge $\Sigma^0_{\swarrow v}$};

\end{tikzpicture}
\caption{The configuration around the vertex $v$.}
\end{figure}
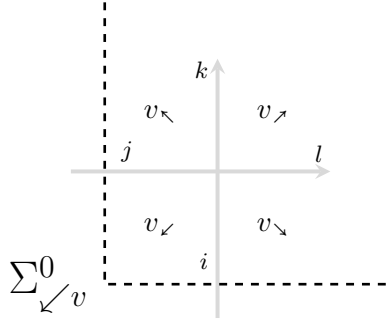

Dividing by $\exp_q\({\H_{(c_1,\dots, c_k)}^{[v_{\mnsearrow}]^r}(\Sigma^0)}\)$ and using relations \eqref{hRelation}, we reduce \eqref{LocalRelation} to
\begin{equation}
\label{reducedLocalRelation}
\E\(q^{\sum_{t=1}^r\1_{l> c_t}}|A_{(i,j)}\)=\frac{q-q^rz}{q-z}+\frac{qz-1}{q-z}\sum_{t=1}^{r}q^{t-1+\1_{i> c_t}}+\frac{1-z}{q-z}\sum_{t=1}^{r}q^{t-1+\1_{i> c_t}+\1_{j> c_t}},
\end{equation}
where $A_{(i,j)}$ denotes the condition that lattice edges entering $v$ have colors $i,j$, while $l$ denotes a random color of the lattice edge exiting $v$ to the right.

To prove \eqref{reducedLocalRelation} we use induction on $r$. The base case $r=0$ is trivial. For the step, note that it is enough to prove
\begin{equation}
\label{RelationStep}
\E\(q^{\sum_{t=1}^r\1_{l> c_t}}(q^{\1_{l> c_{r+1}}}-1)|A_{(i,j)}\)=\frac{(1-q)q^rz}{q-z}+\frac{qz-1}{q-z}q^{r+\1_{i> c_{r+1}}}+\frac{1-z}{q-z}q^{r+\1_{i> c_{r+1}}+\1_{j> c_{r+1}}}.
\end{equation}
The left-hand side is equal to
\be
\E\(q^{\sum_{t=1}^r\1_{l> c_t}}(q^{\1_{l> c_{r+1}}}-1)|A_{(i,j)}\)=q^r(q-1)\mathbb P(l> c_{r+1}|A_{(i,j)})
\ee
where we have used that $c_1\leq \dots\leq c_{r+1}$. To compute the probability $\mathbb P(l> c_{r+1}|A_{(i,j)})$ and to prove that \eqref{RelationStep} holds we have to consider four cases:

\emph{Case 1: $i,j\leq c_{r+1}$.} Then $\mathbb P(l> c_{r+1}|A_{(i,j)})=0$, and the right-hand side of \eqref{RelationStep} is equal to
\be
\frac{(1-q)q^rz}{q-z}+\frac{qz-1}{q-z}q^{r}+\frac{1-z}{q-z}q^{r}=0.
\ee

\emph{Case 2: $i\leq c_{r+1}< j$.} Then $l> c_{r+1}$ only if $j=l$. Using weights \eqref{weightsR} we obtain
\be
q^r(q-1)\mathbb P(l> c_{r+1}|A_{(i,j)})=q^r(q-1)\frac{1-z}{q-z},
\ee
while the right-hand side of \eqref{RelationStep} is equal to
\be
\frac{(1-q)q^rz}{q-z}+\frac{qz-1}{q-z}q^{r}+\frac{1-z}{q-z}q^{r+1}=q^r(q-1)\frac{1-z}{q-z}.
\ee

\emph{Case 3: $j\leq c_{r+1}< i$.} Here $l> c_{r+1}$ only if $i=l$. Using weights \eqref{weightsR} again we get
\be
q^r(q-1)\mathbb P(l> c_{r+1}|A_{(i,j)})=q^r(q-1)\frac{z(q-1)}{q-z},
\ee
while the right-hand side of \eqref{RelationStep} is equal to
\be
\frac{(1-q)q^rz}{q-z}+\frac{qz-1}{q-z}q^{r+1}+\frac{1-z}{q-z}q^{r+1}=q^r(q-1)\frac{z(q-1)}{q-z}.
\ee

\emph{Case 4: $c_{r+1}< i,j$.} Here $\mathbb P(l> c_{r+1}|A_{(i,j)})=1$, and the right-hand side of \eqref{RelationStep} is equal to
\be
\frac{(1-q)q^rz}{q-z}+\frac{qz-1}{q-z}q^{r+1}+\frac{1-z}{q-z}q^{r+2}=q^r(q-1).
\ee
\end{proof}

\begin{rem}
\label{rem:historyLocRel}
The $r=1$ case and the color-blind version of $r=2$ case of Proposition \ref{pLocalRelation} were proved previously in \cite[Theorem 3.1]{BG18} (a partial version of the $r=1$ case was also present in unpublished notes of M.~Wheeler). The full $r=2$ case was proved in \cite[Theorem 3.6]{BGW19}. Note that for fixed $q$ the knowledge of quantities $\mathbb{E} \left[ q^h \right]$ and $\mathbb{E} \left[ q^{2h} \right]$ is not enough for a complete control of the distribution of a random variable $h$ or its asymptotics (however, in the $q \to 1$ limit this information might allow to establish the law of large numbers in certain limit regimes, see \cite{BG18}). Proposition \ref{pLocalRelation} extends this relation to an arbitrary $r$, which allows to control $\mathbb{E} \left[ q^{rh} \right]$ through the $q$-moments of height fuctions in neigboring vertices. 
\end{rem}

For the later use we rewrite the local relation (actually, a weaker version of it) using different notation. As before, let $\{\e^i\}_{i=1}^k$ be the standard basis of $\mathbb Z^k$, with $\e^i$ being the vector with $1$ at $i$-th position and $0$ elsewhere. Set
\be
\e^{[a,b]}=\sum_{i=a}^{b} \e^i.
\ee
Recall that $\sigma^{\pm}_{[a,b]}$ denote cycles in $S_k$ and a permutation $\pi\in S_k$ is called $[a,b]$-ordered if \eqref{ordered} holds.

\begin{prop} 
\label{GlobalRelationProp}
Consider a SC6V model $\M$ with a skew domain $P-Q$. Let $\A=(\alpha_1,\dots, \alpha_k)\in\(\dZ\)^k$, $\B=(\beta_1,\dots, \beta_k)\in\(\dZ\)^k, \c=(c_1,\dots,c_k)\in\Z^k$ be $k$-tuples such that
\be
\alpha_1\leq \alpha_2\leq\dots\leq \alpha_{r-1}<\alpha=\alpha_r=\dots=\alpha_{r+t-1}<\alpha_{r+t}\leq\dots\leq \alpha_k,
\ee
\be
\beta_1\geq \beta_2\geq\dots\geq \beta_{r-1}>\beta=\beta_r=\dots=\beta_{r+t-1}>\beta_{r+t}\geq\dots\geq \beta_k,
\ee
\be
c_1\leq c_2\leq\dots\leq c_k
\ee
for some $r,t$. Assume that all points $(\alpha_i,\beta_i)$ are in the region $P-Q$, as well as the vertex $v=(\alpha-\frac{1}{2},\beta-\frac{1}{2})$. Let $z$ denote the spectral parameter of the vertex $v$. Then for any $[r, r+t-1]$-ordered permutation $\pi$ we have
\begin{multline}
\label{GlobalRelationS6V}
q^{l(\pi)}\E\[\exp_q\({\H_{\pi.\c}^{(\A,\B)}(\Sigma)}\)\]=\frac{q-q^{t}z}{q-z}q^{l(\pi)}\E\[\exp_q\({\H_{\pi.\c}^{\(\A,\B-\e^{[r,r+t-1]}\)}(\Sigma)}\)\]\\
+\sum_{i=0}^{t-1}\frac{qz-1}{q-z}q^{l(\sigma^+_{[r,r+i]}\pi)}\E\[\exp_q\({\displaystyle\H_{\sigma^+_{[r,r+i]}\pi.\c}^{\(\A-\e^r,\B-\e^{[r,r+t-1]}\)}(\Sigma)}\)\]\\
+\sum_{i=0}^{t-1}\frac{1-z}{q-z}q^{l(\sigma^+_{[r,r+i]}\pi)}\E\[\exp_q\({\displaystyle\H_{\sigma^+_{[r,r+i]}\pi.\c}^{\(\A-\e^r,\B-\e^{[r+1,r+t-1]}\)}(\Sigma)}\)\].
\end{multline}
\end{prop}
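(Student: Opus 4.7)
The strategy is to condition on $\Sigma_{\swarrow v}$ and then apply Proposition \ref{pLocalRelation} to the part of $\H^{(\A,\B)}_{\pi.\c}$ that depends on the sampling at $v$. Write $\H^{(\A,\B)}_{\pi.\c}=H_{\mathrm{out}}+H_{\mathrm{in}}$ with $H_{\mathrm{in}}=\sum_{i=r}^{r+t-1}h^{(\alpha,\beta)}_{>c_{\pi^{-1}(i)}}$ collecting the $t$ height functions attached to the common point $v_{\mnnearrow}=(\alpha,\beta)$, and $H_{\mathrm{out}}$ the remaining $k-t$ summands. For $i<r$ the strict inequality $\alpha_i<\alpha$ means every horizontal edge counted by $h^{(\alpha_i,\beta_i)}_{>c}$ has its left endpoint in a column strictly to the left of $v$, hence is sampled entirely from $\Sigma_{\swarrow v}$; a symmetric argument using $\beta_i<\beta$ handles $i\geq r+t$, and together these show that $H_{\mathrm{out}}$ is $\Sigma_{\swarrow v}$-measurable. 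Because $\pi$ is $[r,r+t-1]$-ordered, the indices $\pi^{-1}(r)<\dots<\pi^{-1}(r+t-1)$ are increasing, so the colours $\bar c_j:=c_{\pi^{-1}(r+j-1)}$ form a monotone sequence $\bar c_1\leq\dots\leq \bar c_t$; this is precisely the hypothesis needed to apply Proposition \ref{pLocalRelation} to $H_{\mathrm{in}}=\H^{[v_{\mnnearrow}]^t}_{(\bar c_1,\dots,\bar c_t)}$. Substituting the resulting expression for $\E[\exp_q H_{\mathrm{in}}\mid\Sigma_{\swarrow v}]$, multiplying by $\exp_q(H_{\mathrm{out}})$, and taking the full expectation reduces the proof to identifying each resulting term with $\E[\exp_q \H^{(\A',\B')}_{\pi'.\c}]$ for a suitable triple $(\A',\B',\pi')$.

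The first term, in which all $t$ shared points are $v_{\mnsearrow}$, corresponds to $(\A,\B-\e^{[r,r+t-1]},\pi)$. The subtlety lies in the $v_{\mnswarrow}$ and $v_{\mnnwarrow}$ families: the local relation places the exceptional point at local index $i+1\in\{1,\dots,t\}$, whereas \eqref{GlobalRelationS6V} always places it at global position $r$. The cycle $\sigma^+_{[r,r+i]}$ implements this shift. Using $(\sigma^+_{[r,r+i]}\pi)^{-1}=\pi^{-1}\sigma^-_{[r,r+i]}$, a direct computation shows that under $\sigma^+_{[r,r+i]}\pi.\c$ global position $r$ receives colour $c_{\pi^{-1}(r+i)}=\bar c_{i+1}$, positions $r+1,\dots,r+i$ receive $\bar c_1,\dots,\bar c_i$ in order, and positions $r+i+1,\dots,r+t-1$ receive $\bar c_{i+2},\dots,\bar c_t$ in order. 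After reordering summands this matches the patterns $[v_{\mnsearrow}]^i v_{\mnswarrow}[v_{\mnsearrow}]^{t-i-1}$ and $[v_{\mnsearrow}]^i v_{\mnnwarrow}[v_{\mnsearrow}]^{t-i-1}$ appearing in Proposition \ref{pLocalRelation}. The shift in $\B$ differs between the two cases: $\B-\e^{[r,r+t-1]}$ for the $v_{\mnswarrow}$ family (all $\beta$-coordinates at positions $r,\dots,r+t-1$ drop by one) and $\B-\e^{[r+1,r+t-1]}$ for the $v_{\mnnwarrow}$ family (position $r$ now keeps its $\beta$-coordinate).

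The main remaining obstacle is matching the $q$-prefactors: Proposition \ref{pLocalRelation} contributes a factor $q^i$ inside its sums, while \eqref{GlobalRelationS6V} carries $q^{l(\sigma^+_{[r,r+i]}\pi)}$ outside, which forces the length identity $l(\sigma^+_{[r,r+i]}\pi)=l(\pi)+i$. Letting $j_a:=\pi^{-1}(r+a)$, the $[r,r+t-1]$-orderedness gives $j_0<\dots<j_i$; left-multiplication by $\sigma^+_{[r,r+i]}$ replaces the increasing values $(r,r+1,\dots,r+i)$ at positions $(j_0,\dots,j_i)$ by $(r+1,r+2,\dots,r+i,r)$ while fixing every other value of $\pi$. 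Any inversion involving at most one of $\{j_0,\dots,j_i\}$ keeps its status, because the values swapped among $\{j_0,\dots,j_i\}$ remain inside $[r,r+i]$; within $\{j_0,\dots,j_i\}$ the only pairs that flip from non-inversion to inversion are $(j_a,j_i)$ for $a<i$, which yields exactly $i$ new inversions. Absorbing $q^i$ into $q^{l(\pi)+i}=q^{l(\sigma^+_{[r,r+i]}\pi)}$ produces \eqref{GlobalRelationS6V} term by term; the substantive probabilistic content of the proposition is entirely carried by Proposition \ref{pLocalRelation}, and the rest is combinatorial bookkeeping.
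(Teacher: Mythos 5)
Your proof is correct and follows essentially the same route as the paper's: condition on $\Sigma_{\swarrow v}$, observe that the terms with strictly smaller $\alpha_i$ (resp. $\beta_i$) are measurable with respect to the configuration to the left of (resp. below) $v$, apply Proposition \ref{pLocalRelation} to the $t$ height functions sharing the corner $(\alpha,\beta)$ (using $[r,r+t-1]$-orderedness to get monotone colours), and then match terms by tracking how $\sigma^+_{[r,r+i]}$ relocates the exceptional point to global index $r$. The only place you go beyond the paper is the explicit inversion count proving $l(\sigma^+_{[r,r+i]}\pi)=l(\pi)+i$, which the paper asserts as an immediate consequence of orderedness; your verification of that identity is correct.
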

\begin{proof}
The claim follows directly from the local relation \eqref{LocalRelation} after a change of notation. Set
\be
\pi.\c=(\tilde c_1, \dots, \tilde c_k).
\ee 
\be
H^l(\Sigma)=\H^{(\alpha_1,\beta_1),\dots, (\alpha_{r-1}, \beta_{r-1})}_{(\tilde c_1, \dots, \tilde c_{r-1})}(\Sigma);\quad H^b(\Sigma)=\H^{(\alpha_{r+t},\beta_{r+t}),\dots, (\alpha_{k}, \beta_{k})}_{(\tilde c_{r+t}, \dots, \tilde c_{k})}(\Sigma).
\ee
Note that for any configuration $\Sigma$ both $H^l(\Sigma)$ and $H^b(\Sigma)$ are determined by $\Sigma_{\swarrow v}$, or, more precisely, $H^l(\Sigma)$ is determined by the configuration to the left of $v$, because $\alpha_1,\dots, \alpha_{r-1}<\alpha$, and $H^b(\Sigma)$ is determined by the configuration below $v$, because $\beta_{r+t}, \dots, \beta_k<\beta$. Hence
\be
\E\[\exp_q\({\displaystyle\H_{\pi.\c}^{(\A,\B)}(\Sigma)}\)\]=\E\[\exp_q\({H^l(\Sigma)+H^b(\Sigma)}\)\E\[\exp_q\({\H^{[v_{\mnnearrow}]^t}_{(\tilde c_{r},\dots, \tilde c_{r+t-1})}(\Sigma')}\)\big|\Sigma'_{\swarrow v}=\Sigma_{\swarrow v}\]\],
\ee
where we use the notation from Proposition \ref{pLocalRelation}. Since $\pi$ is $[r,r+t-1]$-ordered we have
\be
\tilde c_r=c_{\pi^{-1}(r)}\leq \tilde c_{r+1}=c_{\pi^{-1}(r+1)}\leq\dots\leq \tilde c_{r+t-1}=c_{\pi^{-1}(r+t-1)},
\ee
so we can apply the local relation \eqref{LocalRelation} to the conditional expectation inside obtaining a sum, which term-wise coincides with \eqref{GlobalRelationS6V}:
\be
\exp_q\({\displaystyle H^l+\H^{[v_{\mnsearrow}]^t}_{(\tilde c_{r},\dots, \tilde c_{r+t-1})}+H^b}\)=\exp_q\({\displaystyle \H_{\pi.\c}^{\(\A,\B-\e^{[r,r+t-1]}\)}}\),
\ee
\begin{multline*}
\exp_q\(i+ H^l+\H^{[v_{\mnsearrow}]^{i-1} v_{\mnswarrow} [v_{\mnsearrow}]^{t-i}}_{(\tilde c_{r},\dots, \tilde c_{r+t-1})}+H^b\)=q^{l(\sigma^+_{[r,r+i]}\pi)-l(\pi)}\exp_q\({\displaystyle\H_{\pi.\c}^{\(\A-\e^{r+i},\B-\e^{[r,r+t-1]}\)}}\)\\
=q^{l(\sigma^+_{[r,r+i]}\pi)-l(\pi)}\exp_q\({\displaystyle \H_{\pi.\c}^{\sigma^-_{[r,r+i]}\(\A-\e^{r},\B-\e^{[r,r+t-1]}\)}}\)=q^{l(\sigma^+_{[r,r+i]}\pi)-l(\pi)}\exp_q\({\displaystyle \H_{\sigma^+_{[r,r+i]}\pi.\c}^{\(\A-\e^{r},\B-\e^{[r,r+t-1]}\)}}\),
\end{multline*}
\be
\exp_q\(i+{\displaystyle H^l+\H^{[v_{\mnsearrow}]^{i-1} v_{\mnnwarrow} [v_{\mnsearrow}]^{t-i}}_{(\tilde c_{r},\dots, \tilde c_{r+t-1})}+H^b}\)=q^{l(\sigma^+_{[r,r+i]}\pi)-l(\pi)}\exp_q\({\displaystyle \H_{\sigma^+_{[r,r+i]}\pi.\c}^{\(\A-\e^{r},\B-\e^{[r+1,r+t-1]}\)}}\).
\ee
Note that for the last two equations we have used the relation $l(\sigma^+_{[r,r+i]}\pi)-l(\pi)=i$, which holds since $\pi$ is $[r,r+t-1]$-ordered.
\end{proof}

\begin{rem} One can perform a horizontal fusion in the local relation \eqref{LocalRelation} to get
\begin{multline}
\label{LocalRelationFused}
\E\[\exp_q\({\H_{(c_1,\dots, c_k)}^{[v_{\mnnearrow}]^k}(\Sigma)}\)\Big|\Sigma_{\swarrow v}=\Sigma^0_{\swarrow v}\]=\frac{1-q^ksu}{1-su}\exp_q\({\H_{(c_1,\dots, c_k)}^{[v_{\mnsearrow}]^k}(\Sigma^0)}\)\\
+\frac{-s^2+qsu}{1-su}\sum_{i=0}^{k-1}\exp_q\({i+\H_{(c_1,\dots, c_k)}^{[v_{\mnsearrow}]^iv_{\mnswarrow}[v_{\mnsearrow}]^{k-i+1}}(\Sigma^0)}\)
+\frac{s^2-su}{1-su}\sum_{i=0}^{k-1}\exp_q\({i+\H_{(c_1,\dots, c_k)}^{[v_{\mnsearrow}]^iv_{\mnnwarrow}[v_{\mnsearrow}]^{k-i+1}}(\Sigma^0)}\),
\end{multline}
where the vertex $v$ has the weights $L_{u}^{(s)}$ given in \eqref{Lweights}. It seems plausible that a similar relation holds in the fully fused model with weights $W^{(\NN,\MM)}_z$, but we do not know the exact form of it.
\end{rem}

\section{Integral formula}

In this section we prove the main result: the integral representation for $q$-moments of the height function. Consider a SC6V model $\mathcal M$ on a skew domain $P-Q$ with the endpoints of $P,Q$ being $\(M+\frac{1}{2}, \frac{1}{2}\)$ and $\(\frac{1}{2}, N+\frac{1}{2}\)$. 

Recall that we can identify the steps of $Q$ with the lattice edges entering the skew domain, and the boundary conditions of the model $\M$ are defined by a monotonic coloring of the steps of $Q$. We can describe this coloring as follows: since the colors of the steps monotonically increase, for each color there exists a unique point of $Q$ denoted by $\q(c)=(\gamma(c),\delta(c))\in\ddZ$ such that the colors of the steps of $Q$ before $\q(c)$ are $\leq c$, while the colors of the steps after $\q(c)$ are $> c$.

Recall from Definition \ref{skewDomainDef} that the row rapidities $(x_1,\dots, x_N)$ and the column rapidities $(y_1,\dots, y_M)$ can be encoded by  $\bz=(\z_1,\dots, \z_{N+M})$, where $\z_i=x_j$ if the $i$th step of $Q$ intersects the $j$th row, and $\z_i=y_j$ if  the $i$th step intersects the $j$th column. Assume that $\z_i\neq q\z_j$ for $1\leq i,j\leq N+M$. Let $\Gamma[i|\bz^{-1}]$ be the contours from Section~\ref{integrals}.
 
\begin{theo}
\label{mainTheorem} 
Fix a S6CV model on $P-Q$ with the coloring of $Q$ defined by $\q(c)$ and the rapidities of rows and columns defined by $\bz=(\z_1, \dots, \z_{N+M})$. For any collection $\{(\alpha_i,\beta_i)\}_{i=1}^k$ of points on $P$ such that
\label{S6Vqmoments}
\be
\alpha_1\geq\dots\geq\alpha_k,\quad \beta_1\geq\dots\geq\beta_k,\qquad \alpha_i,\beta_j\in\dZ,
\ee
 any $k$-tuple of colors $\c=(c_1,c_2,\dots, c_k)$ satsifying
\be
0\leq c_1\leq c_2\leq \dots\leq c_k, \qquad c_k\in\Z,
\ee
and any permutation $\pi\in S_k$ we have
\begin{multline}
\label{integralFormula}
\mathbb E\left[\exp_q\( \H^{(\A,\B)}_{\pi.\c}(\Sigma)\)\right] = q^{\frac{k(k-1)}{2}-l(\pi)} \oint_{\Gamma[1|\bz^{-1}]}\cdots\oint_{\Gamma[k|\bz^{-1}]}\prod_{a<b}\frac{w_b-w_a}{w_b-qw_a}\\
T_\pi\left( \prod_{a=1}^k\prod_{i=1}^{i<\delta(c_a)}\frac{1-x_{i}w_a}{1-qx_{i}w_a}\prod_{j>\gamma(c_a)}^{M}\frac{1-y_jw_a}{1-qy_jw_a}\right)\prod_{a=1}^k\left(\prod_{i=1}^{i<\beta_a}\frac{1-qx_{i}w_a}{1-x_{i}w_a}\prod_{j>\alpha_a}^{M}\frac{1-qy_jw_a}{1-y_jw_a}\frac{dw_a}{2\pi \i w_a}\right),
\end{multline}
where the products are taken over integers $i,j$, the integral with respect to $w_a$ is taken over $\Gamma[a|\bz^{-1}]$ and we set 
\be
\exp_q(x):=q^x,\qquad \H^{(\A,\B)}_{\pi.\c}(\Sigma)=h_{>c_{\pi^{-1}(1)}}^{(\alpha_1,\beta_1)}(\Sigma)+ h_{>c_{\pi^{-1}(2)}}^{(\alpha_2,\beta_2)}(\Sigma)+\dots + h_{>c_{\pi^{-1}(k)}}^{(\alpha_k,\beta_k)}(\Sigma).
\ee
\end{theo}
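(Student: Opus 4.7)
The plan is to prove \eqref{integralFormula} by induction on $|\A|+|\B|$, using the local relation of Proposition \ref{GlobalRelationProp} as the recursive engine and Proposition \ref{base} to furnish the base case. The base case consists of configurations for which no further application of Proposition \ref{GlobalRelationProp} is possible, namely those in which every point $(\alpha_a,\beta_a)$ lies on the initial boundary $Q$. In this situation the height functions are deterministic functions of the boundary coloring and take the form of the ramp function $\R$ of the difference between the positions of $(\alpha_a,\beta_a)$ and $\q(c_a)$ along $Q$, so that the left-hand side of \eqref{integralFormula} matches the left-hand side of Proposition \ref{base} under a natural identification of indices, while the $\A,\B$-dependent products in the integrand collapse and make the right-hand sides agree.

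For the inductive step, fix $(\A,\B,\c)$ not yet at the base case, and choose a ``corner'': indices $r, t$ satisfying
\[
\alpha_{r-1} < \alpha_r = \dots = \alpha_{r+t-1} =: \alpha < \alpha_{r+t}, \qquad \beta_{r-1} > \beta_r = \dots = \beta_{r+t-1} =: \beta > \beta_{r+t},
\]
together with the adjacent vertex $v = (\alpha-\tfrac12,\beta-\tfrac12)$ of spectral parameter $z = x_{\beta-1/2}/y_{\alpha-1/2}$. Both sides of \eqref{integralFormula} are invariant under permuting $\pi$ within the block $\{r,\dots,r+t-1\}$ (the left side because the sum of height functions at coincident points depends only on the multiset of colors, the right side by standard Hecke-algebra symmetry), so we may assume $\pi$ is $[r,r+t-1]$-ordered. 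Applying Proposition \ref{GlobalRelationProp} then rewrites $q^{l(\pi)}\,\E[\exp_q(\H^{(\A,\B)}_{\pi.\c})]$ as a linear combination of the analogous expectations at the strictly smaller configurations $(\A, \B - \e^{[r,r+t-1]})$, $(\A - \e^r, \B - \e^{[r,r+t-1]})$, and $(\A - \e^r, \B - \e^{[r+1,r+t-1]})$, which by the inductive hypothesis all equal the corresponding right-hand sides of \eqref{integralFormula}.

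It then remains to verify that the corresponding linear combination of integrals matches the integral at $(\A, \B)$. Extracting the common factor $G^{(\A, \B - \e^{[r,r+t-1]})}(\bw)$ (where $G^{(\A,\B)}$ denotes the $(\A,\B)$-dependent product in the integrand of \eqref{integralFormula}) and writing $\Phi(\bw) := T_\pi(F(\bw))$, with $F(\bw)$ standing for the boundary-coloring factor, reduces the task to the integral identity
\begin{multline*}
\oint \Delta(\bw)\Phi(\bw)\prod_{a=r}^{r+t-1}\frac{1-qxw_a}{1-xw_a}\, d\bw = \frac{q-q^tz}{q-z}\oint \Delta(\bw)\Phi(\bw)\, d\bw\\
+ \sum_{i=0}^{t-1}\frac{qz-1}{q-z}\oint \Delta(\bw)\,(T_{\sigma^+_{[r,r+i]}}\Phi)(\bw)\,\frac{1-qyw_r}{1-yw_r}\, d\bw\\
+ \sum_{i=0}^{t-1}\frac{1-z}{q-z}\oint \Delta(\bw)\,(T_{\sigma^+_{[r,r+i]}}\Phi)(\bw)\,\frac{(1-qyw_r)(1-qxw_r)}{(1-yw_r)(1-xw_r)}\, d\bw,
\end{multline*}
with $\Delta(\bw) = \prod_{a<b}(w_b-w_a)/(w_b-qw_a)$, $x = x_{\beta-1/2}$, $y = y_{\alpha-1/2}$, and the overall $q^{l(\pi)}$ factors eliminated using $l(\sigma^+_{[r,r+i]}\pi) = l(\pi) + i$ (which holds because $\pi$ is $[r,r+t-1]$-ordered).

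The main obstacle is precisely this last integral identity, and the hard part is matching the three prefactors term-by-term. A natural attack is to use Proposition \ref{selfadjoint} to move every $T_{\sigma^+_{[r,r+i]}}$ onto the $(\A,\B)$-dependent side, then expand each Demazure--Lusztig factor via \eqref{defT}, and finally compute residues at the poles $w_a = x^{-1}$ (inside each contour) and $w_r = y^{-1}$, mirroring the arguments of Lemmas \ref{computation-inside} and \ref{computation-outside}. One hopes to interpret the two sums over $i \in \{0,\dots,t-1\}$ as a partial-fraction decomposition of $\prod_{a=r}^{r+t-1}(1-qxw_a)/(1-xw_a)$, whose residue contributions would then force the three prefactors $(q-q^tz)/(q-z)$, $(qz-1)/(q-z)$, $(1-z)/(q-z)$ to appear in exactly the required combination.
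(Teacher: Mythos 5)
Your overall framework matches the paper's: induction (you phrase it as induction on $|\A|+|\B|$, the paper on the number of vertices inside $P-Q$, which is equivalent modulo the trivial reduction that removing a vertex not touching any $(\alpha_a,\beta_a)$ preserves both sides by stochasticity), base case via Proposition~\ref{base}, reduction to $[r,r+t-1]$-ordered $\pi$ (both sides invariant under reordering within a block — the left by symmetry of the multiset, the right because $\Delta^{(\A,\B)}$ is symmetric in $w_r,\dots,w_{r+t-1}$), and application of Proposition~\ref{GlobalRelationProp} to express the target in terms of smaller domains. You also correctly isolate the remaining integral identity with the right factors $\frac{1-qyw_r}{1-yw_r}$ and $\frac{(1-qyw_r)(1-qxw_r)}{(1-yw_r)(1-xw_r)}$ coming from the $\A$- and $\B$-shifts.

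Where your plan diverges — and where it is genuinely vague — is the final step. You propose to move the operators $T_{\sigma^+_{[r,r+i]}}$ to the $\Delta$-side via Proposition~\ref{selfadjoint} (this the paper also does), but then you propose to finish by expanding the Demazure--Lusztig factors and ``computing residues, mirroring Lemmas~\ref{computation-inside} and~\ref{computation-outside},'' in the hope that the $i$-sums emerge as a partial-fraction decomposition. This is speculative and unlikely to close cleanly: the required equality is not really a statement about which residues survive, but a pointwise identity at the level of integrands. The paper's proof instead first combines the two $i$-sums \emph{term-wise} — observe that
\[
\frac{qz-1}{q-z}\,\frac{1-qyw_r}{1-yw_r}\;+\;\frac{1-z}{q-z}\,\frac{(1-qyw_r)(1-qxw_r)}{(1-yw_r)(1-xw_r)}
\;=\;\frac{z(q-1)(1-qyw_r)}{(q-z)(1-xw_r)},
\]
so the three-term decomposition collapses to a single sum over $i$ — and then proves a purely algebraic Hecke identity (Lemma~\ref{T-comp}),
\[
\frac{q-\lambda q^t}{q-\lambda}\;+\;\sum_{i=0}^{t-1}T_{\sigma^-_{[1,i+1]}}\frac{(q-1)(\lambda-q\mu w_1)}{(q-\lambda)(1-\mu w_1)}
\;=\;\prod_{i=1}^t\frac{1-q\mu w_i}{1-\mu w_i},
\]
established by a telescoping computation using $T_1\bigl(\tfrac{\eta w_1}{1-\eta w_1}\bigr)=\tfrac{\eta w_2}{1-\eta w_2}\cdot\tfrac{1-q\eta w_1}{1-\eta w_1}$. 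This shows the integrands agree exactly, with no contour-dependent cancellations to track. Without this lemma (or its equivalent) you are missing the key ingredient; as written, ``one hopes'' does not supply it. Additionally, you leave the final handling of a general (not $[r,r+t-1]$-ordered) permutation implicit: the paper explicitly factors $\pi=\tau\tilde\pi$ with $\tau\in S_{\{r,\dots,r+t-1\}}$ and uses $T_{\tau^{-1}}\Delta^{(\A,\B)}=q^{l(\tau)}\Delta^{(\A,\B)}$ to reduce to the ordered case, which you gesture at but do not verify.
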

\begin{proof}
 We use induction on the number of vertices inside $P-Q$. 
 
 The base case $P=Q$ follows from Proposition \ref{base}. Let $Q_0,\dots, Q_{N+M}$ denote the points of the path $P=Q$, that is
 \be
 Q:Q_0\to Q_1\to\dots Q_{N+M},
 \ee
 and assume that $(\alpha_i,\beta_i)=Q_{f_i}$ and $(\gamma(c), \delta(c))=Q_{l(c)}$ for certain integers $f_i, l(c)$. Then the height function $h_{>c}^{(\alpha_i,\beta_i)}$ is deterministic and it is equal to $\R(f_i-l(c))$. Indeed, by definition $h_{>c}^{(\alpha_i,\beta_i)}$ is equal to the number of steps of $Q$ before $(\alpha_i,\beta_i)$ with the color $>c$. Hence, if $f_i<l(c)$ all the steps before $(\alpha_i,\beta_i)$ have color $\leq c$, contributing nothing to the height function. On the contrary, if $l(c)\leq f_i$, then there are $f_i-l(c)$ steps between $(\alpha_i,\beta_i)$ and $\q(c)$ contributing to the height function, which results in $h_{>c}^{(\alpha_i,\beta_i)}=f_i-l(c)$.
 
Note that
 \begin{align*}
 \prod_{i=1}^{i<\delta(c)}\frac{1-x_{i}w}{1-qx_{i}w}\prod_{j>\gamma(c)}^{M}\frac{1-y_jw}{1-qy_jw}=\prod_{i=1}^{l(c)}\frac{1-\z_iw}{1-q\z_iw};\\
  \prod_{i=1}^{i<\beta_i}\frac{1-qx_{i}w}{1-x_{i}w}\prod_{j>\alpha_i}^{M}\frac{1-qy_jw}{1-y_jw}=\prod_{i=1}^{f_i}\frac{1-q\z_iw}{1-\z_iw},
 \end{align*}
 so \eqref{S6Vqmoments} reduces to
 \begin{multline*}
\mathbb E\left(q^{\R(f_{\pi(1)}-l(c_1))+\dots+\R(f_{\pi(k)}-l(c_k))}\right) = q^{\frac{k(k-1)}{2}-l(\pi)} \oint_{\Gamma[1|\bz^{-1}]}\cdots\oint_{\Gamma[k|\bz^{-1}]}\prod_{a<b}\frac{w_b-w_a}{w_b-qw_a}\\
T_\pi\left( \prod_{a=1}^k\prod_{i=1}^{l(c)}\frac{1-\z_iw_a}{1-q\z_iw_a}\right)\prod_{a=1}^k\left(\prod_{i=1}^{f_a}\frac{1-q\z_iw_a}{1-\z_iw_a}\frac{dw_a}{2\pi \i w_a}\right).
\end{multline*}
This is exactly the claim of Proposition \ref{base}, so we are done with the proof of the inductive base.

The step of induction consists of changing one outer corner $(\alpha,\beta-1)\to(\alpha,\beta)\to(\alpha-1,\beta)$ of the path $P$ to the inner corner $(\alpha,\beta-1)\to(\alpha-1,\beta-1)\to(\alpha-1,\beta)$, removing one vertex $v=\(\alpha-\frac{1}{2},\beta-\frac{1}{2}\)$ from the skew domain. If $(\alpha_i,\beta_i)\neq(\alpha,\beta)$ for all $i$, the height function $\H_{\pi.\c}^{(\A,\B)}$ does not depend on the configuration around $v$ due to the stochasticity of the vertex weights, so such a change of the path $P$ does not affect both sides of \eqref{S6Vqmoments}.

In the other case, assume that $\alpha_r=\dots=\alpha_{r+t-1}=\alpha$ and $\beta_r=\dots=\beta_{r+t-1}=\beta$. Since $Q$ and $\mathbf c$ are fixed throughout the argument, it is more convenient to rewrite \eqref{integralFormula} as
\be
\E\(\exp_q\(\H_{\pi.\c}^{(\A,\B)}(\Sigma)\)\)=q^{\frac{k(k-1)}{2}-l(\pi)}\left\langle T_\pi\Phi(\bw), \Delta^{(\A,\B)}(\bw)\right\rangle^k_\Gamma
\ee
where  $\A=(\alpha_1,\dots,\alpha_k), \B=(\beta_1,\dots, \beta_k)$ and we set
\begin{align*}
\Phi(\bw):=\prod_{a=1}^k\prod_{i=1}^{i<\delta(c_a)}\frac{1-x_{i}w_a}{1-qx_{i}w_a}\prod_{j>\gamma(c_a)}^{M}\frac{1-y_jw_a}{1-qy_jw_a};\\
\Delta^{(\A,\B)}(\bw)=\prod_{a=1}^k\prod_{i=1}^{i<\beta_a}\frac{1-qx_{i}w_a}{1-x_{i}w_a}\prod_{j>\alpha_a}^{M}\frac{1-qy_jw_a}{1-y_jw_a}.
\end{align*}

For now assume that the permutation $\pi$ is $[r,r+t-1]$-ordered. Then, by the inductive hypothesis and Proposition \ref{GlobalRelationProp} we have
\begin{align}
\label{stepAfterRelation}
\begin{split}
q^{l(\pi)-\frac{k(k-1)}{2}}\E\(\exp_q\({\H_{\pi.\c}^{(\A,\B)}(\Sigma)}\)\)=&\frac{q-q^txy^{-1}}{q-xy^{-1}}\langle T_\pi\Phi(\bw), \Delta^{(\A,\B-\e^{[r,r+t-1]})}(\bw)\rangle_\Gamma\\
&+\sum_{i=0}^{t-1}\frac{qxy^{-1}-1}{q-xy^{-1}}\left\langle T_{\sigma^+_{[r,r+i]}\pi}\Phi(\bw),\Delta^{(\A-\e^r,\B-\e^{[r,r+t-1]})}(\bw)\right\rangle_\Gamma\\
&+\sum_{i=0}^{t-1}\frac{1-xy^{-1}}{q-xy^{-1}}\left\langle T_{\sigma^+_{[r,r+i]}\pi}\Phi(\bw),\Delta^{(\A-\e^r,\B-\e^{[r+1,r+t-1]})}(\bw)\right\rangle_\Gamma,
\end{split}
\end{align}
where we set $x=x_{\beta-1/2}, y=y_{\alpha-1/2}$. It can be readily verified by a direct computation that
\begin{multline*}
\frac{qxy^{-1}-1}{q-xy^{-1}}\Delta^{(\A-\e^r,\B-\e^{[r,r+t-1]})}(\bw)+\frac{1-xy^{-1}}{q-xy^{-1}}\Delta^{(\A-\e^r,\B-\e^{[r+1,r+t-1]})}(\bw)\\
=\(\frac{qxy^{-1}-1}{q-xy^{-1}}+\frac{1-xy^{-1}}{q-xy^{-1}}\frac{1-qxw_r}{1-xw_r}\)\Delta^{(\A-\e^r,\B-\e^{[r,r+t-1]})}(\bw)\\
=\frac{xy^{-1}(q-1)(1-qyw_r)}{(q-xy^{-1})(1-xw_r)}\Delta^{(\A,\B-\e^{[r,r+t-1]})}(\bw),
\end{multline*}
so we can combine the terms of the two sums in \eqref{stepAfterRelation} term-wise to obtain
\begin{multline*}
q^{l(\pi)-\frac{k(k-1)}{2}}\E\(\exp_q\(\H_{\pi.\c}^{(\A,\B)}(\Sigma)\)\)=\frac{q-q^txy^{-1}}{q-xy^{-1}}\langle T_\pi\Phi(\bw), \Delta^{(\A,\B-\e^{[r,r+t-1]})}(\bw)\rangle_\Gamma\\
+\sum_{i=0}^{t-1}\left\langle T_{\sigma^+_{[r,r+i]}\pi}\Phi(\bw),\frac{xy^{-1}(q-1)(1-qyw_r)}{(q-xy^{-1})(1-xw_r)}\Delta^{(\A,\B-\e^{[r,r+t-1]})}(\bw)\right\rangle_\Gamma.
\end{multline*}

Since $\pi$ is $[r,r+t-1]$-ordered we have $T_{\sigma^+_{[r,r+i]}\pi}=T_{\sigma^+_{[r,r+i]}}T_\pi$ for any $i=0,\dots, t-1$. Hence, using Proposition \ref{selfadjoint} and the fact that $\Delta^{(\A,\B-\e^{[r,r+t-1]})}(\bw)$ is symmetric with respect to $w_r,\dots, w_{r+t-1}$, we have
\begin{multline}
\label{beforeLem}
q^{l(\pi)-\frac{k(k-1)}{2}}\E\(\exp_q\(\H_{\pi.\c}^{(\A,\B)}(\Sigma)\)\)\\
=\left\langle T_\pi\Phi(\bw),\(\frac{q-q^txy^{-1}}{q-xy^{-1}}  +\sum_{i=0}^{t-1} T_{\sigma^-_{[r,r+i]}} \frac{xy^{-1}(q-1)(1-qyw_r)}{(q-xy^{-1})(1-xw_r)}\)\Delta^{(\A,\B-\e^{[r,r+t-1]})}(\bw)\right\rangle_\Gamma.
\end{multline}
\begin{lem}
\label{T-comp}
For any $\lambda,\mu$, and any $t\in\mathbb Z_{\geq 0}$ we have
\be
\frac{q-\lambda q^t}{q-\lambda}+\sum_{i=0}^{t-1}T_{\sigma^-_{[1,i+1]}}\frac{(q-1)(\lambda-q\mu w_1)}{(q-\lambda)(1-\mu w_1)}=\prod_{i=1}^t\frac{1-q\mu w_i}{1-\mu w_i}.
\ee
\end{lem}
\begin{proof}
Note that 
\be
T_1\(\frac{\eta w_1}{1-\eta w_1}\)=\(\frac{q\eta w_1}{1-\eta w_1}+\frac{w_2-qw_1}{w_2-w_1}\(\frac{\eta w_2}{1-\eta w_2}-\frac{\eta w_1}{1-\eta w_1}\)\)=\frac{\eta w_2}{1-\eta w_2}\frac{1-q\eta w_1}{1-\eta w_1}.
\ee
This identity can be iterated, leading to
\be
T_{\sigma^-_{[1,r]}}\(\frac{(1-q)\eta w_1}{1-\eta w_1}\)=T_{r-1}\dots T_1\(\frac{(1-q)\eta w_1}{1-\eta w_1}\)=\frac{(1-q)\eta w_{r}}{1-\eta w_r}\prod_{i=1}^{r-1}\frac{1-q\eta w_i}{1-\eta w_i}=\prod_{i=1}^{r}\frac{1-q\eta w_i}{1-\eta w_i}-\prod_{i=1}^{r-1}\frac{1-q\eta w_i}{1-\eta w_i}.
\ee
Hence, cancelling telescoping terms and using $T_i(1)=q$ we obtain
\begin{multline*}
\sum_{i=0}^{t-1} T_{\sigma^-_{[1,i+1]}}\frac{(q-1)(\lambda-q\eta w_1)}{(q-\lambda)(1-\eta w_1)}=\sum_{i=0}^{t-1} T_{\sigma^-_{[1,i+1]}}\(\frac{\lambda(q-1)}{q-\lambda} +\frac{(1-q)\eta w_1}{1-\eta w_1}\)\\
=\frac{q^t-1}{q-1}\cdot\frac{\lambda(q-1)}{q-\lambda} + \prod_{i=1}^{t}\frac{1-q\eta w_i}{1-\eta w_i} -1=\frac{\lambda q^t-q}{q-\lambda} + \prod_{i=1}^{t}\frac{1-q\eta w_i}{1-\eta w_i}.
\end{multline*}
\end{proof}

Setting $\lambda=xy^{-1}$ and $\eta=x$, Lemma \ref{T-comp} gives
\be
\frac{q-q^txy^{-1}}{q-xy^{-1}}+\sum_{i=0}^{t-1}T_{r+i-1}\dots T_{r}\frac{(q-1)xy^{-1}(1-qy w_r)}{(q-xy^{-1})(1-x w_r)}=\prod_{i=r}^{r+t-1}\frac{1-qx w_i}{1-x w_i}.
\ee
Plugging it into \eqref{beforeLem}, we get
\begin{multline*}
q^{l(\pi)-\frac{k(k-1)}{2}}\E\(\exp_q\({\H_{\pi\c}^{(\A,\B)}(\Sigma)}\)\)=\left\langle T_\pi\Phi(\bw), \prod_{i=r}^{r+t-1}\frac{1-qxw_i}{1-xw_i}\Delta^{(\A,\B-\e^{[r,r+t-1]})}(\bw)\right\rangle_\Gamma\\
=\left\langle T_\pi\Phi(\bw), \Delta^{(\A,\B)}(\bw)\right\rangle_\Gamma,
\end{multline*}
which proves the inductive step for a $[r,r+t-1]$-ordered permutation $\pi$.

To remove the restriction on the permutation $\pi$, note that any permutation can be uniquely written as $\pi=\tau\tilde\pi$, where $\tilde\pi$ is $[r,r+t-1]$-ordered and $\tau$ permutes only $\{r,\dots, r+t-1\}$. Then for the left-hand side of \eqref{integralFormula} we have
\be
\E\(\exp_q\(\H_{\pi.\c}^{(\A,\B)}(\Sigma)\)\)=\E\(\exp_q\(\H_{\tau\tilde\pi.\c}^{(\A,\B)}(\Sigma)\)\)=\E\(\exp_q\(\H_{\tilde\pi.\c}^{(\tau^{-1}.\A,\tau^{-1}.\B)}(\Sigma)\)\)=\E\(\exp_q\(\H_{\tilde\pi.\c}^{(\A,\B)}(\Sigma)\)\),
\ee
while for the right-hand side 
\begin{multline*}
q^{\frac{k(k-1)}{2}-l(\pi)} \left\langle T_{\tau\tilde\pi}\Phi(\bw);\Delta^{(\A,\B)}(\bw)\right\rangle_\Gamma^k=q^{\frac{k(k-1)}{2}-l(\tilde \pi)-l(\tau)} \left\langle T_{\tilde\pi}\Phi(\bw);T_{\tau^{-1}}\Delta^{(\A,\B)}(\bw)\right\rangle_\Gamma^k\\
=q^{\frac{k(k-1)}{2}-l(\tilde\pi)} \left\langle T_{\tilde\pi}\Phi(\bw);\Delta^{(\A,\B)}(\bw)\right\rangle_\Gamma^k,
\end{multline*}
where we have used that $\Delta^{(\A,\B)}(\bw)$ is symmetric in $w_r,\dots, w_{r+t-1}$ to deduce that $T_{\tau^{-1}}\Delta^{(\A,\B)}(\bw)=q^{l(\tau)}\Delta^{(\A,\B)}(\bw)$. Thus, the integral representation \eqref{integralFormula} for an arbitrary permutation $\pi$ follows from the integral representation for $\tilde\pi$, which is $[r,r+t-1]$-ordered.  
\end{proof}


\section{Shift-invariance}
\label{shiftSec}
This section is devoted to a recently established symmetry of vertex models, called \emph{shift-invariance}: it turns out that multi-dimensional distributions of height functions are invariant under certain transformations. This symmetry was initially described in \cite{BGW19} and then it was generalized in \cite{Gal20} for a wider class of transformations. Here we give an alternative proof of some of the results from \cite{BGW19} and \cite{Gal20} by matching the integral representations from Theorem \ref{mainTheorem}.

To phrase the shift-invariance it is convenient to use additional notation related to the geometry of skew domains. For a couple of points $\p=(\alpha,\beta)\in\ddZ$ and $\q=(\gamma, \delta)\in\ddZ$ we write
\be
\q\nearrow\p,\quad \text{if}\ \gamma<\alpha\ \text{and}\ \delta<\beta; \quad\quad\quad\quad  \q\nwarrow\p,\quad \text{if}\ \gamma\geq\alpha\ \text{and}\ \delta\leq\beta;
\ee
\be
\q\searrow\p,\quad \text{if}\quad \gamma\leq\alpha\ \text{and}\ \delta\geq\beta.
\ee
Mnemonically, the notation above can be visualized as a relative position of the points $\p,\q$ on the dual lattice, with $\q$ and $\p$ corresponding to the beginning and the end of the arrow respectively. Note that the inequalities for $\q\nearrow\p$ are strict, while for $\q\nwarrow\p$  and $\q\searrow\p$ they are not.

The following notion of $(Q,P)$-cuts is borrowed from \cite{Gal20}. Let $Q\leq P$ be a pair of up-left paths defining a SC6V-model, with endpoints $\(M+\frac{1}{2},\frac{1}{2}\)$ and $\(\frac{1}{2},N+\frac{1}{2}\)$. For simplicity we assume that the coloring of $Q$ defining the boundary conditions is given by $(1,\dots, N+M)$. A \emph{$(Q,P)$-cut} $C=(\q, \p)$ is a pair of points on the dual lattice $\ddZ$ such that $\q\in Q$, $\p\in P$ and $\q\nearrow\p$. Our primary use of cuts is to visualize the data of the height function: for a $(Q,P)$-cut $C=(\q,\p)$ set
\be
h[C]=h[(\q,\p)]:=h^{\p}_{>c(\q)},
\ee
where $c(\q)$ is the unique color such that the point $\q\in Q$ is between incoming lattice edges with colors $c(\q)$ and $c(\q)+1$.

Two $(Q,P)$-cuts $C=(\q,\p)$ and $C'=(\q',\p')$ are called \emph{crossing} if
\be
\q\nwarrow\q', \p\searrow\p',\quad \text{or}\quad \q\searrow\q', \p\nwarrow\p'.
\ee
Otherwise the cuts are called \emph{non-crossing}. For non-crossing $(Q,P)$-cuts $C=(\q,\p)$ and $C'=(\q',\p')$ we write $ C'>C$ if $C'$ is strictly to the up-left from $C$, that is, $\q'\searrow\q$ and $\p'\searrow\p$.

For a cut $C=(\q, \p)$ with $\p=(\alpha,\beta)$ and $\q=(\gamma,\delta)$ define 
\be
\Row[C]=\Row[\q,\p]:=\left\{\delta+\frac{1}{2}, \dots, \beta-\frac{1}{2}\right\},\qquad\Col[C]=\Col[\q,\p]:=\left\{\gamma+\frac{1}{2}, \dots, \alpha-\frac{1}{2}\right\}.
\ee 
These are exactly rows and columns between the points $\q$ and $\p$, intersecting the cut $C=(\q,\p)$. Let $P-Q$ and $\widetilde P-\widetilde Q$ be a pair of skew domains with endpoints $\(\frac{1}{2},N+\frac{1}{2}\)$ and $\(M+\frac{1}{2}, \frac{1}{2}\)$.  We say that a collection of $(Q,P)$-cuts $(C_1,\dots, C_k)$ is \emph{shift-isomorphic} to a collection of $(\widetilde Q,\widetilde P)$-cuts $(\widetilde C_1, \dots, \widetilde C_k)$ if :
\begin{itemize}
\item For every $i,j$ we have $C_i> C_j$ if and only if $\widetilde C_i>\widetilde C_j$;
\item There exist bijections
\be
\phi:\{1,\dots,N\}\to\{1,\dots,N\} , \quad \psi:\{1,\dots,M\}\to \{1,\dots,M\}
\ee 
such that $\phi(\Row[C_i])=\Row[\widetilde C_i]$ and $\psi(\Col[C_i])=\Col(\widetilde C_i)$ for all $i=1,\dots, k$. The pair $(\phi,\psi)$ is called \emph{shift-isomorphism}.
\end{itemize}
 See Figure \ref{shiftInvarianceFig} for an example of shift-isomorphic cuts.

\begin{theo}
\label{shiftInv}
Let $\M$ and $\widetilde\M$ be SC6V models on skew domains $P-Q$ and $\widetilde P-\widetilde Q$ with endpoints $\(N+\frac{1}{2},\frac{1}{2}\)$ and $\(\frac{1}{2}, M+\frac{1}{2}\)$. Let $\bx=(x_1,\dots, x_N)$, $\by=(y_1,\dots, y_M)$ be respectively the row and column rapidities of $\M$, and similarly let $\widetilde{\bx}$ and $\widetilde{\by}$ denote the rapidities of $\widetilde\M$.  Let $C=(C_1,\dots, C_k)$ and $\widetilde C=(\widetilde C_1,\dots, \widetilde C_k)$ be collections of shft-isomorphic $(Q,P)$ and $(\widetilde Q,\widetilde P)$-cuts, with a shift-isomorphism given by permutations $\phi\in S_N$ and $\psi\in S_M$. Assume that $\widetilde{\bx}=\phi.\bx$ and $\widetilde{\by}=\psi.\by$. Then the random vector
\be
(h[C_1], h[C_2],\dots, h[C_k])
\ee
is equal in distribution to the random vector
\be
(h[\widetilde C_1], h[\widetilde C_2],\dots, h[\widetilde C_k]).
\ee
\end{theo}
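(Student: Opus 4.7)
The plan is to reduce the shift-invariance to an identity of joint $q$-moments, and then to verify this identity via the integral formula of Theorem~\ref{mainTheorem}. Because each $h[C_i]$ is a bounded non-negative integer random variable, the joint distribution of the vector $(h[C_1],\dots,h[C_k])$ is determined by the family of mixed $q$-moments
\[
\mathbb{E}\bigl[q^{r_1 h[C_1]+\cdots+r_k h[C_k]}\bigr],\qquad r_1,\dots,r_k\in\mathbb{Z}_{\ge 0}.
\]
So it suffices to show these moments coincide for $\M$ and $\widetilde\M$.

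Fix $(r_1,\dots,r_k)$. First I would order the non-crossing cuts $C_1,\dots,C_k$ along $P$ and clone each $C_a$ into $r_a$ identical consecutive copies, producing a list of $K=\sum_a r_a$ triples $(\alpha_b,\beta_b,c_b)$. The non-crossing hypothesis implies that this ordering is simultaneously monotone for $(\alpha_b)$, $(\beta_b)$, and the colors $c_b = c(\q_b)$ in the directions required by Theorem~\ref{mainTheorem}, since along an up-left path the two coordinates move monotonically in opposite senses and, for non-crossing cuts, the position of $\p_a$ on $P$ determines the position of $\q_a$ on $Q$. Applying Theorem~\ref{mainTheorem} with $\pi=\mathrm{id}$ (so that $T_\pi$ is the identity and the prefactor collapses to $q^{\binom{K}{2}}$) expresses the desired mixed moment as
\[
\mathbb{E}_{\M}\!\left[q^{\sum_a r_a h[C_a]}\right]=q^{\binom{K}{2}}\oint\cdots\oint\prod_{a<b}\frac{w_b-w_a}{w_b-q w_a}\,\prod_b F_b(w_b)G_b(w_b)\,\prod_b\frac{dw_b}{2\pi\i\, w_b},
\]
where $F_b,G_b$ are the boundary and local factors from Theorem~\ref{mainTheorem} attached to clone $b$.

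The crucial observation is that, for each clone $b$ corresponding to a cut $C_a=(\q_a,\p_a)$, the explicit forms of $F_b$ and $G_b$ telescope to
\[
F_b(w_b)G_b(w_b)=\prod_{i\in\Row[C_a]}\frac{1-q x_i w_b}{1-x_i w_b}\,\prod_{j\in\Col[C_a]}\frac{1-y_j w_b}{1-q y_j w_b},
\]
so the rapidities $\bx,\by$ enter the integrand only through the sets $\Row[C_a]$ and $\Col[C_a]$. The shift-isomorphism conditions $\phi(\Row[C_a])=\Row[\widetilde C_a]$, $\psi(\Col[C_a])=\Col[\widetilde C_a]$, together with $\widetilde x_{\phi(i)}=x_i$ and $\widetilde y_{\psi(j)}=y_j$, convert the above product into its counterpart for $\widetilde C_a$ with rapidities $\widetilde\bx,\widetilde\by$. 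Since $\bz$ and $\widetilde\bz$ coincide as multisets, the contours $\Gamma[a|\bz^{-1}]$ and $\Gamma[a|\widetilde\bz^{-1}]$ may be chosen identically. Matching integrands and contours term-by-term then yields equality of the two integrals, hence of the $q$-moments, and hence of the joint distributions.

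The main obstacle will be the ordering step: one must verify that an arbitrary collection of non-crossing cuts (allowing the weak ties permitted by the shift-isomorphism definition) admits a linear order compatible with all monotonicity hypotheses of Theorem~\ref{mainTheorem} at once, and that this property is inherited by the cloned list. This reduces to the structural fact that position along $P$ orders $(\alpha_a,\beta_a)$ jointly in the required opposite monotone directions, while the non-crossing condition pins down the corresponding order of the $\q_a$'s along $Q$ (and therefore of the $c_a$'s). Once this combinatorial point is established, the telescoping $F_bG_b$ calculation is a routine unfolding of the definitions of $\Row$, $\Col$, $F_b$, and $G_b$, and the identification of the two sets of contours follows immediately from the multiset identity of rapidities.
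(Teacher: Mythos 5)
There is a genuine gap in your very first step: you assume the cuts $C_1,\dots,C_k$ are pairwise non-crossing, but the theorem makes no such assumption. Figure~\ref{shiftInvarianceFig} in the paper exhibits shift-isomorphic collections with crossing cuts (e.g.\ $C_2$ and $C_3$ there), and the shift-isomorphism condition only requires that for each pair $(i,j)$ the cuts $C_i,C_j$ stand in the same relation (crossing, $>$, or $<$) as $\widetilde C_i,\widetilde C_j$ --- crossing pairs are explicitly allowed on both sides. When cuts cross, the $P$-ordering of the endpoints $\p_a$ and the $Q$-ordering of the base points $\q_a$ (hence of the colors $c(\q_a)$) are not simultaneously achievable, so one cannot take $\pi=\mathrm{id}$ in Theorem~\ref{mainTheorem}. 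Your claim that ``the position of $\p_a$ on $P$ determines the position of $\q_a$ on $Q$'' is precisely the non-crossing hypothesis, and it fails in general.

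Consequently the ``telescoping'' identity $F_bG_b=\Delta^{\p_a-\q_a}(w_b)$ is correct but is only the $\rho=\mathrm{id}$ term in the expansion $T_\pi=\sum_\rho \kappa_\pi^\rho(\bw)\,t_\rho$ of Proposition~\ref{kappaDef}. For a nontrivial $\pi$ the integrand involves a full sum over $\rho\preceq\pi$, and one must check term by term that the two sides of \eqref{shiftInvarianceMatchingeq} agree. This is the content of the paper's proof: after fixing a $Q$-ordering and encoding the $P$-ordering by a permutation $\pi$, it invokes Lemma~\ref{combOfCuts} to classify each $\rho\preceq\pi$ into three cases, and then uses the residue/vanishing properties of $\kappa_\pi^\rho(\bw)$ (Proposition~\ref{Zprop}) to show that the corresponding summands either vanish on both sides (cases (i)--(ii)) or match via exactly your $\Delta^{\p-\q}$ computation (case (iii)). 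Without that analysis, your argument establishes the theorem only for collections of mutually non-crossing cuts, which is a strictly weaker (and much less interesting) statement.
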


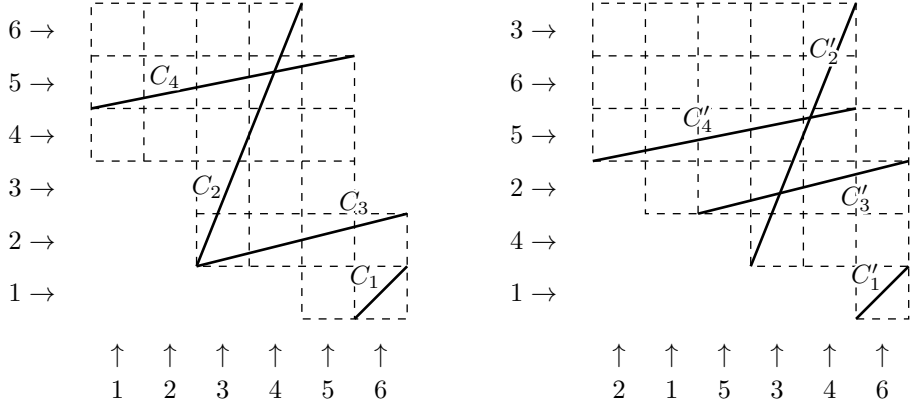
\begin{figure}
\begin{tikzpicture}[scale=0.7,baseline={([yshift=0]current bounding box.center)},>=stealth]

	\node[below] at (6,0.2) {$\uparrow$};
	\node[below] at (6,-0.5) {$6$};
	
	\node[below] at (5,0.2) {$\uparrow$};
	\node[below] at (5,-0.5) {$5$};

	\node[below] at (4,0.2) {$\uparrow$};
	\node[below] at (4,-0.5) {$4$};
	
	\node[below] at (3,0.2) {$\uparrow$};
	\node[below] at (3,-0.5) {$3$};
	
	\node[below] at (2,0.2) {$\uparrow$};
	\node[below] at (2,-0.5) {$2$};

	\node[below] at (1,0.2) {$\uparrow$};
	\node[below] at (1,-0.5) {$1$};
	
	\node[left] at (0,1) {$1 \rightarrow$};
	
	\node[left] at (0,2) {$2 \rightarrow$};
	
	\node[left] at (0,3) {$3 \rightarrow$};
	
	\node[left] at (0,4) {$4 \rightarrow$};
	
	\node[left] at (0,5) {$5 \rightarrow$};
		
	\node[left] at (0,6) {$6 \rightarrow$};
	
	\draw[style=lightdual] (1.5, 3.5) -- (1.5, 6.5);
	\draw[style=lightdual] (0.5, 4.5) -- (5.5, 4.5);
	\draw[style=lightdual] (0.5, 5.5) -- (5.5, 5.5) -- (5.5, 0.5);
	\draw[style=lightdual] (3.5, 1.5) -- (3.5, 6.5);
	\draw[style=lightdual] (2.5, 2.5) -- (6.5, 2.5) -- (6.5, 0.5) -- (4.5, 0.5) -- (4.5, 6.5) -- (0.5, 6.5) -- (0.5, 3.5) -- (5.5, 3.5);
	\draw[style=lightdual] (2.5, 6.5) -- (2.5, 1.5) -- (6.5, 1.5);
	
	\draw[style=cut] (0.5, 4.5) -- (5.5, 5.5);
	\draw[style=cut] (2.5, 1.5) -- (4.5, 6.5);
	\draw[style=cut] (2.5, 1.5) -- (6.5, 2.5);
	\draw[style=cut] (5.5, 0.5) -- (6.5, 1.5);
	
	\node at (5.7, 1.3) {\contour{white}{$C_1$}};
	\node at (2.7, 3) {\contour{white}{$C_2$}};
	\node at (5.5, 2.7) {\contour{white}{$C_3$}};
	\node at (1.9, 5.1) {\contour{white}{$C_4$}};
\end{tikzpicture}
\hspace{1cm}
\begin{tikzpicture}[scale=0.7,baseline={([yshift=0]current bounding box.center)},>=stealth]

	\node[below] at (6,0.2) {$\uparrow$};
	\node[below] at (6,-0.5) {$6$};
	
	\node[below] at (5,0.2) {$\uparrow$};
	\node[below] at (5,-0.5) {$4$};

	\node[below] at (4,0.2) {$\uparrow$};
	\node[below] at (4,-0.5) {$3$};
	
	\node[below] at (3,0.2) {$\uparrow$};
	\node[below] at (3,-0.5) {$5$};
	
	\node[below] at (2,0.2) {$\uparrow$};
	\node[below] at (2,-0.5) {$1$};

	\node[below] at (1,0.2) {$\uparrow$};
	\node[below] at (1,-0.5) {$2$};
	
	\node[left] at (0,1) {$1 \rightarrow$};
	
	\node[left] at (0,2) {$4 \rightarrow$};
	
	\node[left] at (0,3) {$2 \rightarrow$};
	
	\node[left] at (0,4) {$5 \rightarrow$};
	
	\node[left] at (0,5) {$6 \rightarrow$};
		
	\node[left] at (0,6) {$3 \rightarrow$};
	
	\draw[style=lightdual] (0.5, 4.5) -- (6.5, 4.5) -- (6.5, 0.5) -- (5.5, 0.5) -- (5.5, 6.5) -- (0.5, 6.5) -- (0.5, 3.5) -- (6.5, 3.5);
	\draw[style=lightdual] (0.5, 5.5) -- (5.5, 5.5);
	\draw[style=lightdual] (2.5, 2.5) -- (2.5, 6.5);
	\draw[style=lightdual] (4.5, 1.5) -- (4.5, 6.5);
	\draw[style=lightdual] (1.5, 6.5) -- (1.5, 2.5) -- (6.5, 2.5);
	\draw[style=lightdual] (3.5, 6.5) -- (3.5, 1.5) -- (6.5, 1.5);
		
	\draw[style=cut] (0.5, 3.5) -- (5.5, 4.5);
	\draw[style=cut] (3.5, 1.5) -- (5.5, 6.5);
	\draw[style=cut] (2.5, 2.5) -- (6.5, 3.5);
	\draw[style=cut] (5.5, 0.5) -- (6.5, 1.5);
	
	\node at (5.7,1.3) {\contour{white}{$C'_1$}};
	\node at (4.9, 5.6) {\contour{white}{$C'_2$}};
	\node at (5.5, 2.8) {\contour{white}{$C'_3$}};
	\node at (2.5, 4.3) {\contour{white}{$C'_4$}};
\end{tikzpicture}
\caption{\label{shiftInvarianceFig} A pair of shift-isomorphic collections of cuts. The maps $\phi$ and $\psi$ are given by renumbering of rows and columns, demonstrated on the right picture.}
\end{figure}

The proof is based on the following key lemma
\begin{lem}
\label{shiftInvarianceMatching}
In the setting of Theorem \ref{shiftInv}, we have
\be
\label{shiftInvarianceMatchingeq}
\E\(q^{h[C_1]+\dots+h[C_k]}\)=\E\(q^{h[\widetilde C_1]+\dots+h[\widetilde C_k]}\)
\ee
\end{lem}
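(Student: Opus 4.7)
The plan is to apply the integral representation of Theorem~\ref{mainTheorem} to both $q$-moments in \eqref{shiftInvarianceMatchingeq} and to match the resulting contour integrals term by term.

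First, I would re-index the cuts compatibly on both sides. Since the shift-isomorphism preserves the partial order $C_i > C_j$ by the first bullet of its definition, a common re-indexing pairs $C_a$ with $\widetilde C_a$ and allows a further reordering so that on each side the sorted-point hypothesis of Theorem~\ref{mainTheorem} holds. Let $\sigma,\widetilde\sigma \in S_k$ be the permutations that sort the color sequences $(c(\q_a))$ and $(c(\widetilde\q_a))$ in non-decreasing order. Applying Theorem~\ref{mainTheorem} with $\pi=\sigma$ on the left and $\pi=\widetilde\sigma$ on the right expresses each side as a $k$-fold contour integral.

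Next, observe that the rapidity-dependent piece of the integrand attached to the variable $w_a$ collapses, after telescoping cancellation between the ``color'' factors (coming from $\q_a$) and the ``point'' factors (coming from $\p_a$), to
\be
\prod_{i\in\Row[C_a]} \frac{1 - q x_i w_a}{1 - x_i w_a}\prod_{j\in\Col[C_a]} \frac{1-y_jw_a}{1-qy_jw_a},
\ee
and similarly with tildes on the right. The defining identities $\phi(\Row[C_a])=\Row[\widetilde C_a]$ and $\psi(\Col[C_a])=\Col[\widetilde C_a]$ of a shift-isomorphism, together with the rapidity identifications $\widetilde x_{\phi(i)}=x_i$, $\widetilde y_{\psi(j)}=y_j$, make these combined factors agree factor-by-factor under the relabeling $i\leftrightarrow\phi(i)$, $j\leftrightarrow\psi(j)$. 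The same identification forces $\{\z_i^{-1}\}$ and $\{\widetilde\z_i^{-1}\}$ to coincide as multisets, so the integration contours $\Gamma[a|\bz^{-1}]$ and $\Gamma[a|\widetilde\bz^{-1}]$ are the same.

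The main obstacle is that Theorem~\ref{mainTheorem} places $T_\pi$ on the color half of the integrand only, not on the combined factor above, so agreement of combined factors does not immediately imply agreement of integrands --- one must still reconcile $T_\sigma$ with $T_{\widetilde\sigma}$. The plan is to expand $T_\pi = \sum_\rho \kappa^\rho_\pi(\bw)\, t_\rho$ via Propositions~\ref{kappaDef}--\ref{kappaZ}, representing each integrand as a finite sum indexed by $\rho\in S_k$ of ``mixed'' contributions in which $w_a$ is paired with the color data of $C_{\rho^{-1}(a)}$ and the point data of $C_a$. Each such mixed contribution is again a product of rational factors indexed by the rows lying between $\delta_{\rho^{-1}(a)}$ and $\beta_a$ (and analogously for columns), and is therefore transported between the two sides by the rapidity substitution above. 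The remaining purely combinatorial task is to identify the corresponding sums of coefficients $\kappa^\rho_\sigma$ and $\kappa^\rho_{\widetilde\sigma}$; since shift-isomorphism preserves the $>$-poset of cuts, $\sigma$ and $\widetilde\sigma$ differ only by transpositions of $>$-incomparable cuts, and this discrepancy can be tracked through the recursion \eqref{localZ} together with the partition-function interpretation of $\kappa^\rho_\pi$ from Remark~\ref{coefrem} and the analytic properties assembled in Proposition~\ref{Zprop}.
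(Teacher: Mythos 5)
Your overall strategy --- apply the integral formula of Theorem~\ref{mainTheorem}, expand $T_\pi$ as $\sum_\rho \kappa^\rho_\pi(\bw)\,t_\rho$, and match the $\rho$-indexed contributions --- is indeed the same route the paper takes, but there are two substantive gaps in how you carry it out.

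First, the reconciliation of $\sigma$ and $\widetilde\sigma$ that occupies your last paragraph is a problem you create for yourself and then do not resolve. The paper avoids it entirely by ordering the cuts not by colors but by the positions of the endpoints on $Q$ and $P$ (the $Q$-ordered and inversely $P$-ordered conditions), and then proving that shift-isomorphism \emph{preserves} both of these orders (Lemma \ref{orders}). Consequently the \emph{same} permutation $\pi$ works for both sides, the coefficients $\kappa^\rho_\pi(\bw)$ are identical on both sides, and the problem reduces cleanly to the term-by-term identity \eqref{termMatching}. Your appeal to ``$\sigma$ and $\widetilde\sigma$ differ only by transpositions of $>$-incomparable cuts, and this discrepancy can be tracked through the recursion'' does not constitute a proof, and the difficulty it attempts to paper over is exactly the part that needs an argument.

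Second, and more seriously, your claim that each mixed contribution --- with $w_a$ paired to the color data of $C_{\rho^{-1}(a)}$ and the point data of $C_a$ --- ``is again a product of rational factors indexed by the rows lying between $\delta_{\rho^{-1}(a)}$ and $\beta_a$ (and analogously for columns), and is therefore transported between the two sides by the rapidity substitution'' is false in general. The factor $\Delta^{\p_a-\q_{\rho^{-1}(a)}}(w_a)$ has that clean product form over $\Row[\q_{\rho^{-1}(a)},\p_a]$ and $\Col[\q_{\rho^{-1}(a)},\p_a]$ \emph{only when} $\q_{\rho^{-1}(a)}\nearrow\p_a$, i.e., only when the mixed pair is itself a valid cut. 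When it is not (when $\q_{\rho^{-1}(a)}\searrow\p_a$ or $\q_{\rho^{-1}(a)}\nwarrow\p_a$), that $\rho$-term is not transported --- instead it must be shown to \emph{vanish} on both sides of \eqref{termMatching}, by a residue argument invoking the vanishing properties of $\kappa^\rho_\pi$ in Proposition \ref{Zprop}. The crucial combinatorial input is then that the trichotomy (valid mixed cut / $\searrow$-degenerate / $\nwarrow$-degenerate) is itself preserved by shift-isomorphism for every $\rho\preceq\pi$, with the $\Row$ and $\Col$ sets matching under $(\phi,\psi)$ in the valid case. This is precisely Lemma \ref{combOfCuts}, proved by descent in the Bruhat order, and your sketch has no analogue of it. Without it, you cannot rule out a $\rho$-term that matches on one side but vanishes on the other.
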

First we will show how Theorem \ref{shiftInv} follows from Lemma \ref{shiftInvarianceMatching}, and then we will give the proof of Lemma \ref{shiftInvarianceMatching}.
\begin{proof}[Proof of Theorem]
Note that instead of proving the equality in distribution of the height functions we can equivalently prove that the following random vectors have the same distribution
\be
(q^{h[C_1]},\dots, q^{h[C_k]})=(q^{h[\widetilde C_1]},\dots, q^{h[\widetilde C_k]}).
\ee
Since $0<q<1$, it is enough to prove that the moments of these vectors coincide, namely, it is enough to check that for any $a_1,\dots,a_k\in\Z_{\geq 0}$ we have
\be
\E\(q^{a_1h[C_1]+\dots+a_kh[C_k]}\)=\E\(q^{a_1h[\widetilde C_1]+\dots+a_kh[\widetilde C_k]}\).
\ee
This is exactly the claim of Lemma \ref{shiftInvarianceMatching}, with the cuts $C_1$ and $\tilde C_1$ iterated for $a_1$ times, the cuts $C_2$ and $\tilde C_2$ iterated for $a_2$ time and so on.
 \end{proof}
 
 \begin{proof}[Proof of Lemma \ref{shiftInvarianceMatching}]
 We will prove that the integral expressions given by Theorem \ref{mainTheorem} are equal for the both sides of the equality in the lemma, using Proposition \ref{Zprop} and combinatorics of cuts. We start with the latter. 

Let $(C_1,\dots, C_k)$ be a collection of $(Q,P)$-cuts with $C_i=(\q^{pre}_i,\p^{pre}_i)$. We say that the collection $(C_1,\dots, C_k)$  is \emph{$Q$-ordered} if $\q^{pre}_1\nwarrow\dots\nwarrow\q^{pre}_k$ and for every $i<j$ such that $\q^{pre}_i=\q^{pre}_j$ we have $\p^{pre}_i\searrow\p^{pre}_j$. Similarly, the collection $(C_1,\dots, C_k)$ is inversely \emph{$P$-ordered} if $\p^{pre}_1\searrow\dots\searrow\p^{pre}_k$ and for every $i<j$ such that $\p^{pre}_i=\p^{pre}_j$ we have $\q^{pre}_i\nwarrow\q^{pre}_j$. \footnote{These definitions are motivated by the desire to reorder a sequence of cuts in a way such that the points $\q^{pre}_i$ go from the bottom-right endpoint of $Q$ in the up-left direction and the points $\p^{pre}_i$ go from the top-left endpoint of $P$ in the down-right direction, but since achieving this is impossible in general, we distinguish two extreme cases which separately prioritize the order of the points $\{\q^{pre}_i\}$ or the order of the points $\{\p^{pre}_i\}$.}

Let $(C_1,\dots, C_k)$ and $(\widetilde C_1,\dots, \widetilde C_k)$ be shift-isomorphic collections of $(Q,P)$ and $(\widetilde Q, \widetilde P)$-cuts, with shift-isomorphism given by $(\phi,\psi)$. 

\begin{lem}
\label{orders}
1) If $(C_1,\dots, C_k)$ is $Q$-ordered then  $(\widetilde C_1,\dots, \widetilde C_k)$ is also $\widetilde Q$-ordered.

2) If $(C_1,\dots, C_k)$ is inversely $P$-ordered then  $(\widetilde C_1,\dots, \widetilde C_k)$ is also inversely $\widetilde P$-ordered.
\end{lem}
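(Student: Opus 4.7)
The plan is to verify both parts pairwise: for each $i<j$, show that the pair $(\widetilde C_i, \widetilde C_j)$ satisfies the corresponding ordering condition. Part 2 is symmetric to Part 1 with the roles of $\q$ and $\p$ interchanged and $\nwarrow$ swapped with $\searrow$, so I would focus on Part 1. The case analysis is driven by whether the pair $(C_i, C_j)$ is non-crossing or crossing; accordingly, the argument uses the two halves of the shift-isomorphism hypothesis separately.

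In the non-crossing case, the $Q$-orderedness gives $\q_i \nwarrow \q_j$, and the non-crossing condition then forces $\p_i \nwarrow \p_j$ (otherwise the pair would satisfy the crossing definition), so $C_j > C_i$ (or $C_j = C_i$ in the fully degenerate case). The shift-isomorphism preserves $>$, which gives $\widetilde C_j > \widetilde C_i$, and hence $\widetilde\q_j \searrow \widetilde\q_i$, i.e., $\widetilde\q_i \nwarrow \widetilde\q_j$, as required.

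In the crossing case, $\q_i \nwarrow \q_j$ and $\p_i \searrow \p_j$ both hold (including the $Q$-ordered equality cases), and a short computation from the definitions yields the interval containments $\Row[C_j] \subseteq \Row[C_i]$ and $\Col[C_i] \subseteq \Col[C_j]$. Since $\phi$ and $\psi$ send Row (resp.\ Col) intervals of the cuts to Row (resp.\ Col) intervals of the shifted cuts, these containments transfer to $\Row[\widetilde C_j] \subseteq \Row[\widetilde C_i]$ and $\Col[\widetilde C_i] \subseteq \Col[\widetilde C_j]$. Comparing the minima of nested intervals gives $\widetilde\delta_i \leq \widetilde\delta_j$ and $\widetilde\gamma_i \geq \widetilde\gamma_j$, that is, $\widetilde\q_i \nwarrow \widetilde\q_j$; comparing maxima gives $\widetilde\p_i \searrow \widetilde\p_j$, which automatically takes care of the tie-breaking condition should $\widetilde\q_i = \widetilde\q_j$ occur.

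The main obstacle I expect is ruling out a tie-breaking failure in the non-crossing case: a priori, the coincidence $\widetilde\q_i = \widetilde\q_j$ could arise while $\widetilde C_j > \widetilde C_i$ is strict, in which case the strictness must come from $\widetilde\p_j$ being strictly NW of $\widetilde\p_i$, contradicting the $\widetilde Q$-ordered requirement $\widetilde\p_i \searrow \widetilde\p_j$. I would exclude this by combining the $>$-preservation with the up-left path structure of $\widetilde Q$ and $\widetilde P$: these structural constraints restrict which $\phi,\psi$ are admissible, and a careful check shows that $\widetilde\q_i = \widetilde\q_j$ under strict $\widetilde C_j > \widetilde C_i$ is inconsistent with the row and column containments jointly imposed by all the cuts in the collection.
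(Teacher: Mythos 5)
Your case split (crossing vs.\ non-crossing) is equivalent to the paper's split on whether $\p_i \searrow \p_j$ holds, since $Q$-orderedness already gives $\q_i \nwarrow \q_j$, and the crossing-case argument via Row/Col interval containments is exactly the paper's. So the approach matches the paper's.

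The one real issue is the tie-breaking worry you flag at the end, and the resolution you sketch overshoots: you do not need to bring in the global structure of all cuts, admissibility constraints on $\phi,\psi$, or the up-left path geometry. The point is purely local. The relation $C' > C$ is only defined (and hence can only be true) for non-crossing pairs. In the non-crossing case you correctly obtain $\widetilde C_j > \widetilde C_i$, which in particular means $\widetilde C_i$ and $\widetilde C_j$ are non-crossing, with $\widetilde\q_i \nwarrow \widetilde\q_j$ and $\widetilde\p_i \nwarrow \widetilde\p_j$. Now if $\widetilde\q_i = \widetilde\q_j$, then $\widetilde\q_i \searrow \widetilde\q_j$ as well, and together with $\widetilde\p_i \nwarrow \widetilde\p_j$ this is exactly the second alternative in the definition of crossing cuts --- contradiction. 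Hence $\widetilde\q_i \neq \widetilde\q_j$ and the tie-breaking clause is vacuous. (This is also what the paper is implicitly using when it asserts $\widetilde\q_i \neq \widetilde\q_j$.) Incidentally, the parenthetical ``or $C_j = C_i$ in the fully degenerate case'' cannot occur in the non-crossing branch: equal cuts satisfy the crossing definition, since all the non-strict $\nwarrow$ and $\searrow$ relations hold reflexively.
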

\begin{proof}
We will prove only the first part, the proof of the second part is similar. Assume that the collection $(C_1,\dots, C_k)$ is $Q$-ordered and fix $i<j$. We have two cases:

If $\p^{pre}_i\searrow\p^{pre}_j$ then the cuts $C_i$ and $C_j$ are crossing and we have $\Row[C_j]\subset \Row[C_i]$ and $\Col[C_i]\subset \Col[C_j]$. Since $\phi$ and $\psi$ are bijections, we get $\Row[\widetilde C_j]\subset \Row[\widetilde C_i]$ and $\Col[\widetilde C_i]\subset \Col[\widetilde C_j]$, implying $\widetilde\q^{pre}_i\nwarrow\widetilde\q^{pre}_j$ and $\widetilde\p^{pre}_i\searrow\widetilde\p^{pre}_j$. In other words, $\widetilde\q^{pre}_i\nwarrow\widetilde \q^{pre}_j$ and if $\widetilde\q^{pre}_j=\widetilde\q^{pre}_i$ then $\widetilde\p^{pre}_i\searrow\widetilde\p^{pre}_j$.

On the other hand, if $\p^{pre}_i\nwarrow\p^{pre}_j, \p^{pre}_i=\p^{pre}_j$ we must have $\q^{pre}_i\neq\q^{pre}_j$ since the collection $(C_1,\dots, C_k)$ is $Q$-ordered. Then the cuts $C_i$ and $C_j$ are non-crossing and $C_j>C_i$. Hence $\widetilde C_j>\widetilde C_i$, so $\widetilde \q^{pre}_i\nwarrow\widetilde \q^{pre}_j$ with $\widetilde \q^{pre}_i\neq\widetilde \q^{pre}_j$.
\end{proof}

Note that simultaneous permutation of cuts in both collections $(C_1, \dots, C_k)$ and $(\widetilde C_1, \dots, \widetilde C_k)$ preserves shift-isomorphism and expectations in both sides of the lemma, so we can assume that the cuts $(C_1,\dots, C_k)$ are $Q$-ordered. Alternatively, we could have made this collection inversely $P$-ordered, so let $\pi$ be a permutation such that $(C_{\pi^{-1}(1)},\dots, C_{\pi^{-1}(k)})$ is an inversely $P$-ordered collection of cuts. Define
\be
\q_i:=\q_i^{pre},\qquad \p_{i}:=\p^{pre}_{\pi^{-1}(i)},
\ee 
so $C_i=(\q_i, \p_{\pi(i)})$. Then $\q_1,\dots, \q_k$ are ordered in the up-left direction along $Q$ and $\p_1.\dots, \p_k$ are ordered in the reverse down-right direction along $P$: $\q_1\nwarrow\dots\nwarrow\q_k$ and $\p_1\searrow\dots\searrow\p_k$. Similarly, we can define 
\be
\widetilde\q_i:=\widetilde\q_i^{pre},\qquad \widetilde\p_{i}:=\widetilde\p^{pre}_{\pi^{-1}(i)},
\ee 
for the cuts $(\widetilde C_1,\dots, \widetilde C_k)$. By Lemma \ref{orders} the points $\widetilde\q_i$ and $\widetilde\p_i$ are ordered in the same way as $\q_i,\p_i$.

Recall that since $C_{\pi^{-1}(i)}=(\q_{\pi^{-1}(i)},\p_{i})$ is a well-defined cut for any $i$, we have well-defined sets $\Col[{\q_{\pi^{-1}(i)}},{\p_i}]$, $\Row[{\q_{\pi^{-1}(i)}},\p_i]$, $\Col[{\widetilde{\q}_{\pi^{-1}(i)}},\widetilde\p_i],\Row[{\widetilde{\q}_{\pi^{-1}(i)}},\widetilde \p_i]$ and, since $(\phi,\psi)$ is a shift-isomorphism, 
\be
\phi(\Row[{\q_{\pi^{-1}(i)}},\p_i])=\Row[{\widetilde{\q}_{\pi^{-1}(i)}},\widetilde \p_i],\quad \psi(\Col[{\q_{\pi^{-1}(i)}},\p_i])=\Col[{\widetilde{\q}_{\pi^{-1}(i)}},\widetilde\p_i].
\ee
We can extend the equalities above as follows. Remember that $\preceq$ denotes the (strong) Bruhat order on $S_k$.
\begin{lem}
\label{combOfCuts}
For any $\rho\preceq\pi$ one of the following holds:

(i) For some $i$ we have $\q_{\rho^{-1}(i)}\searrow\p_{i}$ and  $\widetilde\q_{\rho^{-1}(i)}\searrow\widetilde\p_{i}$.

(ii) For some $i$ we have $\q_{\rho^{-1}(i)}\nwarrow\p_{i}$ and  $\widetilde\q_{\rho^{-1}(i)}\nwarrow\widetilde\p_{i}$.

(iii) For all $i$ we have $\q_{\rho^{-1}(i)}\nearrow\p_{i}$, $\widetilde\q_{\rho^{-1}(i)}\nearrow\widetilde\p_{i}$ and
\be
\phi(\Row[{\q_{\rho^{-1}(i)}},\p_i])=\Row[{\widetilde{\q}_{\rho^{-1}(i)}},\widetilde\p_i],\quad \psi(\Col[{\q_{\rho^{-1}(i)}},\p_i])=\Col[{\widetilde{\q}_{\rho^{-1}(i)}},\widetilde\p_i].
\ee
\end{lem}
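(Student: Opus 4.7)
My plan is to induct on $l(\pi) - l(\rho) \ge 0$ via cover relations in the Bruhat order. For the base case $\rho = \pi$, each pair $(\q_{\pi^{-1}(i)}, \p_i)$ coincides after the reindexing $j = \pi^{-1}(i)$ with the cut $C_{j}$, so the $\nearrow$ conditions hold on both sides by validity of the original cuts, and the row and column equalities of case~(iii) are just the shift-isomorphism conditions $\phi(\Row[C_j]) = \Row[\widetilde C_j]$, $\psi(\Col[C_j]) = \Col[\widetilde C_j]$.

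For the inductive step, I will fix a cover $\rho' \lessdot \rho$, write $\rho' = \rho \tau_{a,b}$ with $a < b$ and $\rho(a) > \rho(b)$, and set $i_1 = \rho(a) > \rho(b) = i_2$. Using the orderings $c_a \geq c_b$, $d_a \leq d_b$, $a_{i_1} \geq a_{i_2}$, $b_{i_1} \leq b_{i_2}$, the transfer of cases~(i) and~(ii) from $\rho$ to $\rho'$ is a coordinate-chasing exercise: at indices outside $\{i_1, i_2\}$ the pairings are unchanged, while at $i \in \{i_1, i_2\}$ either case propagates by direct inequality comparison (for instance, case~(i) for $\rho$ at $i_2$ gives $c_b \leq a_{i_2} \leq a_{i_1}$ and $d_b \geq b_{i_2} \geq b_{i_1}$, hence $\q_b \searrow \p_{i_1}$, which is case~(i) for $\rho'$ at $i_1$). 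The same implication holds on the $\widetilde C$ side from the inductive hypothesis applied there, so cases~(i) and~(ii) descend consistently.

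The substantive situation is case~(iii) for $\rho$. There the four strict $\nearrow$ inequalities combined with the orderings force $(\q_b, \p_{i_1})$ to be either $\nearrow$ or $\searrow$ (the $x$-comparison being automatic) and $(\q_a, \p_{i_2})$ to be either $\nearrow$ or $\nwarrow$ (the $y$-comparison automatic), placing the $C$ side into one of~(i)--(iii). The hard part will be showing that the same binary choice is forced on the $\widetilde C$ side. My strategy is to encode each choice such as ``$d_b < b_{i_1}$ versus $d_b \geq b_{i_1}$'' as an incidence condition on the fixed row-set collection $\{\Row[C_j]\}$: concretely, $d_b < b_{i_1}$ is equivalent to $\Row[\q_b, \p_{i_1}]$ being non-empty, a statement that expresses how the intervals $\Row[C_b]$ and $\Row[C_{\pi^{-1}(i_1)}]$ meet at their endpoints. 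Since $\phi$ preserves each $\Row[C_j]$ bijectively as a set, such an incidence statement carries over to the $\widetilde C$ side and pins down the same binary choice. Column conditions are handled symmetrically via $\psi$.

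When the outcome is case~(iii) for $\rho'$, the row equalities at $i_1, i_2$ follow from the set identities
\begin{align*}
\Row[\q_b, \p_{i_1}] &= \Row[\q_a, \p_{i_1}] \cap \Row[\q_b, \p_{i_2}], \\
\Row[\q_a, \p_{i_2}] &= \Row[\q_a, \p_{i_1}] \cup \Row[\q_b, \p_{i_2}],
\end{align*}
which hold because the intervals $\Row[\q_a, \p_{i_1}] = [d_a + \tfrac{1}{2}, b_{i_1} - \tfrac{1}{2}]$ and $\Row[\q_b, \p_{i_2}] = [d_b + \tfrac{1}{2}, b_{i_2} - \tfrac{1}{2}]$ overlap (by the $\nearrow$ hypothesis at $\rho'$) and their endpoints swap under the transposition. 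Applying $\phi$, which commutes with union and intersection, and invoking the inductive hypothesis for $\rho$ on the right-hand sides together with the analogous identities on the $\widetilde C$ side, will yield $\phi(\Row[\q_{\rho'^{-1}(i_k)}, \p_{i_k}]) = \Row[\widetilde\q_{\rho'^{-1}(i_k)}, \widetilde\p_{i_k}]$ for $k = 1, 2$. The column equalities follow symmetrically via $\psi$.
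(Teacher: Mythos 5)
Your induction is the same downward Bruhat induction the paper uses, and the set identities
\[
\Row[\q_b,\p_{i_1}]=\Row[\q_a,\p_{i_1}]\cap\Row[\q_b,\p_{i_2}],\qquad
\Row[\q_a,\p_{i_2}]=\Row[\q_a,\p_{i_1}]\cup\Row[\q_b,\p_{i_2}],
\]
are exactly the mechanism the paper's case~(iii) rests on, so the outline is sound in structure. However, the intermediate step where you ``encode $d_b<b_{i_1}$ as an incidence condition on $\{\Row[C_j]\}$'' and then claim that ``how the intervals $\Row[C_b]$ and $\Row[C_{\pi^{-1}(i_1)}]$ meet at their endpoints'' carries over because $\phi$ preserves each $\Row[C_j]$ as a set, does not work. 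The shift-isomorphism $\phi$ is an arbitrary bijection of $\{1,\dots,N\}$; it preserves set identities among the distinguished row-sets but is not order-preserving, so a statement about relative positions of endpoints is not transported by $\phi$. Moreover, the intervals $\Row[C_b]$ and $\Row[C_{\pi^{-1}(i_1)}]$ are not even the objects controlled by the inductive hypothesis: case~(iii) for $\rho$ only tells you $\phi(\Row[\q_a,\p_{i_1}])=\Row[\widetilde\q_a,\widetilde\p_{i_1}]$ and $\phi(\Row[\q_b,\p_{i_2}])=\Row[\widetilde\q_b,\widetilde\p_{i_2}]$, i.e.\ it controls the $\rho$-pairings, not the original cuts.

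The fix is already implicit in your own set identities. Since $d_a\le d_b$ and $b_{i_1}\le b_{i_2}$, the equality $\Row[\q_b,\p_{i_1}]=\Row[\q_a,\p_{i_1}]\cap\Row[\q_b,\p_{i_2}]$ holds as an identity of (possibly empty) sets with no overlap hypothesis needed, so ``$d_b<b_{i_1}$'' is literally ``$\Row[\q_a,\p_{i_1}]\cap\Row[\q_b,\p_{i_2}]\neq\varnothing$''. Applying $\phi$ to that intersection and invoking the inductive case~(iii) for $\rho$ (plus the analogous ordering on the tilde side, from Lemma~\ref{orders}) makes the emptiness condition a genuine set-theoretic statement that is preserved. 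The column choice is handled symmetrically with $\psi$. With that one justification replaced, your argument coincides with the paper's treatment of subcases (iii).1--(iii).3, and the remaining case~(i)/(ii) propagation by coordinate chasing is correct (note you should also check the subcase ``case~(i) for $\rho$ at $i_1$'', which likewise lands on $\q_b\searrow\p_{i_1}$ using $\q_b\searrow\q_a$).
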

\begin{proof}
We use downward induction on $\rho$, starting from $\rho=\pi$. As noted before, (iii) holds for $\rho=\pi$.

Recall that the Bruhat order can be defined by the covering relations $\rho\preceq\rho\cdot\tau_{a,b}$ with $l(\rho)<l(\rho\cdot\tau_{a,b})$, where $a<b$ and $\tau_{a,b}$ is the transposition exchanging $a$ and $b$. To prove the step of induction, it is enough to prove for $\rho\preceq\rho\cdot\tau_{a,b}$ that one of the conditions (i)-(iii) holds for $\rho$ if one of the conditions (i)-(iii) holds for $\rho\cdot\tau_{a,b}$.

First assume that the condition (i) holds for $\rho\cdot\tau_{a,b}$, that is, there exists $i$ such that $\q_{\tau_{a,b}\cdot\rho^{-1}(i)}\searrow\p_{i}$ and $\widetilde\q_{\tau_{a,b}\cdot\rho^{-1}(i)}\searrow\widetilde\p_{i}$. If $\rho^{-1}(i)\neq a,b$ then (i) holds for $\rho$ with the same $i$. If $\rho^{-1}(i)=a$ we have
\be
\q_b=\q_{\tau_{a,b}\cdot\rho^{-1}(i)}\searrow\p_{i}=\p_{\rho(a)}\searrow\p_{\rho(b)}
\ee
where we have used $l(\rho)<l(\rho\cdot\tau_{a,b})$ to get $\rho(a)<\rho(b)$ and hence $\p_{\rho(a)}\searrow\p_{\rho(b)}$. In a similar way one can show that $\widetilde\q_b\searrow\widetilde\p_{\rho(b)}$, so (i) holds for $\rho$ with $i$ replaced by $\rho(b)$. If $\rho^{-1}(i)=b$ then we have
\be
\q_b\searrow\q_a=\q_{\tau_{a,b}\cdot\rho^{-1}(i)}\searrow\p_{i}=\p_{\rho(b)}
\ee
and similarly $\widetilde\q_b\searrow\widetilde\p_{\rho(b)}$, so (i) holds for $\rho$ and the same $i$. Thus, if (i) holds for $\rho\cdot\tau_{a,b}$, then (i) also holds for $\rho$.

Similarly if (ii) holds for $\rho\cdot\tau_{a,b}$ then (ii) holds for $\rho$. The proof is analogous to the proof for the condition (i), so we omit it.

Finally, assume that (iii) holds for $\rho\cdot\tau_{a,b}$. Then clearly for all $i\neq \rho(a),\rho(b)$ we have 
\be 
\q_{\rho^{-1}(i)}=\q_{\tau_{a,b}\cdot\rho^{-1}(i)}\nearrow\p_{i},\qquad\widetilde\q_{\rho^{-1}(i)}=\widetilde\q_{\tau_{a,b}\cdot\rho^{-1}(i)}\nearrow\widetilde\p_{i},
\ee
\be
\phi(\Row[{\q_{\rho^{-1}(i)}},\p_i])=\Row[{\widetilde{\q}_{\rho^{-1}(i)}},\widetilde\p_i],\quad \psi(\Col[{\q_{\rho^{-1}(i)}},\p_i])=\Col[{\widetilde{\q}_{\rho^{-1}(i)}},\widetilde\p_i].
\ee
So we only need to consider the pairs $(\q_{a},\p_{\rho(a)}),(\q_{b},\p_{\rho(b)}),(\widetilde\q_{a},\widetilde\p_{\rho(a)}),(\widetilde\q_{b},\widetilde\p_{\rho(b)})$. Here we have three subcases:

\emph{(iii).1: $\Row[\q_{a},\p_{\rho(b)}]\cap \Row[\q_{b},\p_{\rho(a)}]=\varnothing$}: This means that all the rows intersecting the cut $(\q_{a},\p_{\rho(b)})$ are strictly below the rows intersecting $(\q_{b},\p_{\rho(a)})$, forcing $\q_b\searrow\p_{\rho(b)}$. On the other hand, since (iii) holds for $\rho\cdot\tau_{a,b}$, we also have $\Row[\widetilde\q_{a},\widetilde\p_{\rho(b)}]\cap \Row[\widetilde\q_{b},\widetilde\p_{\rho(a)}]=\varnothing$, hence $\widetilde\q_b\searrow\widetilde\p_{\rho(b)}$. So, (i) holds for $\rho$.

\emph{(iii).2: $\Col[\q_{a},\p_{\rho(b)}]\cap \Col[\q_{b},\p_{\rho(a)}]=\varnothing$}: This case in analogous to the previous one: the columns intersecting the cut $(\q_{a},\p_{\rho(b)})$ are strictly to the right of the rows intersecting $(\q_{b},\p_{\rho(a)})$ forcing $\q_a\nwarrow\p_{\rho(a)}$. By (iii) for $\rho\cdot\tau_{a,b}$ we also have $\widetilde\q_a\nwarrow\widetilde\p_{\rho(a)}$, hence (ii) holds for $\rho$.

\emph{(iii).3: The intersections in both subcases above are nonempty.} Then rows intersecting the cut $(\q_{a},\p_{\rho(b)})$ overlap with the rows of $(\q_{b},\p_{\rho(a)})$. Note that the rows $\Row[\q_{a},\p_{\rho(b)}]\cap \Row[\q_{b},\p_{\rho(a)}]$ are exactly the rows between $\q_{b}$ and $\p_{\rho(b)}$, with $\q_b$ being lower than $\p_{\rho(b)}$. On the other hand, one can immediately see that the rows between $\q_{a}$ and $\p_{\rho(a)}$ are exactly $\Row[\q_{a},\p_{\rho(b)}]\cup \Row[\q_{b},\p_{\rho(a)}]$, and $\q_a$ is lower that $\p_{\rho(a)}$. Repeating the same argument for columns, we get $\q_a\nearrow\p_{\rho(a)}$ and $\q_b\nearrow\p_{\rho(b)}$ with
\be
\Row[\q_a,\p_{\rho(a)}]=\Row[\q_{a},\p_{\rho(b)}]\cup \Row[\q_{b},\p_{\rho(a)}],\qquad \Col[\q_a,\p_{\rho(a)}]=\Col[\q_{a},\p_{\rho(b)}]\cap \Col[\q_{b},\p_{\rho(a)}],
\ee
\be
\Row[\q_b,\p_{\rho(b)}]=\Row[\q_{a},\p_{\rho(b)}]\cap \Row[\q_{b},\p_{\rho(a)}],\qquad \Col[\q_b,\p_{\rho(b)}]=\Col[\q_{a},\p_{\rho(b)}]\cup \Col[\q_{b},\p_{\rho(a)}].
\ee
Since (iii) holds for $\rho\cdot\tau_{a,b}$ we can repeat the argument for $\widetilde\q$ and $\widetilde\p$ to get $\widetilde\q_a\nearrow\widetilde\p_{\rho(a)}$, $\widetilde\q_b\nearrow\widetilde\p_{\rho(b)}$ and identical identities about unions and intersection. Hence
\be
\phi(\Row[\q_a,\p_{\rho(a)}])=\phi(\Row[\q_{a},\p_{\rho(b)}]\cup \Row[\q_{b},\p_{\rho(a)}])=\Row[\widetilde\q_{a},\widetilde\p_{\rho(b)}]\cup \Row[\widetilde\q_{b},\widetilde\p_{\rho(a)}]=\Row[\widetilde\q_a,\widetilde\p_{\rho(a)}],
\ee
and, in a similar fashion,
\be
\phi(\Row[\q_b,\p_{\rho(b)}])=\Row[\widetilde\q_b,\widetilde\p_{\rho(b)}],\qquad \psi(\Col[\q_a,\p_{\rho(a)}])=\Col[\widetilde\q_a,\widetilde\p_{\rho(a)}], \qquad \psi(\Col[\q_b,\p_{\rho(b)}])=\Col[\widetilde\q_b,\widetilde\p_{\rho(b)}].
\ee
Thus, (iii) holds for $\rho$.
\end{proof}

Now we are ready to prove Lemma \ref{shiftInvarianceMatching}. Similarly to the notation $\Delta^{(\A,\B)}(\bw)$ used in the proof of Theorem \ref{mainTheorem}, for points $\p=(\alpha,\beta)$ and $\q=(\gamma,\delta)$ we set
\be
\Delta^{\p}(w):=\prod_{i=1}^{i<\beta}\frac{1-qx_{i}w}{1-x_{i}w}\prod_{j>\alpha}^{M}\frac{1-qy_jw}{1-y_jw},
\ee
\be
\Delta^{-\p}(w):=\(\Delta^{\p}(w)\)^{-1}=\prod_{i=1}^{i<\beta}\frac{1-x_{i}w}{1-qx_{i}w}\prod_{j>\alpha}^{M}\frac{1-y_jw}{1-qy_jw},
\ee
\be
\Delta^{\p-\q}(w):=\Delta^{\p}(w)\Delta^{-\q}(w).
\ee
In the same way we define $\widetilde\Delta^{\widetilde\p-\widetilde\q}(w)$ by replacing the rapidities $x_i$ and $y_j$ with $\widetilde x_i$ and $\widetilde y_j$. Note that for $\q\searrow\p$ the function $\Delta^{\p-\q}(w)$ is singular only at $w=q^{-1}x^{-1}_i$ and $w=q^{-1}y^{-1}_i$, while for for $\q\nwarrow\p$ it is singular only at $w=x_i^{-1}$ and $w=y_i^{-1}$.

Remember that for a cut $C_i=(\q_{i},\p_{\pi(i)})$ we have $h[C]=h^{\p_{\pi(i)}}_{>c(\q)}$, so applying Theorem \ref{mainTheorem} to the points $(\alpha_i,\beta_i)=\p_i$ and the colors $c_i=c(\q_i)$ we get
\begin{multline*}
\E\[q^{\sum_i h[C_i]}\]= q^{\frac{k(k-1)}{2}-l(\pi)} \oint_{\Gamma[1|\bz^{-1}]}\cdots\oint_{\Gamma[k|\bz^{-1}]}\prod_{a<b}\frac{w_b-w_a}{w_b-qw_a}T_\pi\left( \prod_{a=1}^k\Delta^{-\q_a}(w_a)\right)\prod_{a=1}^k\Delta^{\p_a}(w_a)\frac{dw_a}{2\pi \i w_a}\\
=\sum_\rho q^{\frac{k(k-1)}{2}-l(\pi)} \oint_{\Gamma[1|\bz^{-1}]}\cdots\oint_{\Gamma[k|\bz^{-1}]}\prod_{a<b}\frac{w_b-w_a}{w_b-qw_a}\kappa_\pi^\rho(\bw) \prod_{a=1}^k\Delta^{\p_a-\q_{\rho^{-1}(a)}}(w_a)\frac{dw_a}{2\pi \i w_a},
\end{multline*}
where for the last equality we use Proposition \ref{kappaDef}. Similarly
\begin{equation*}
\E\[q^{\sum_i h[\widetilde C_i]}\]=\sum_\rho q^{\frac{k(k-1)}{2}-l(\pi)} \oint_{\Gamma[1|\bz^{-1}]}\cdots\oint_{\Gamma[k|\bz^{-1}]}\prod_{a<b}\frac{w_b-w_a}{w_b-qw_a}\kappa_\pi^\rho(\bw) \prod_{a=1}^k\widetilde\Delta^{\widetilde\p_a-\widetilde\q_{\rho^{-1}(a)}}(w_a)\frac{dw_a}{2\pi \i w_a}.
\end{equation*}
Taking the right-hand sides of the two equalities above, we reduce the statement of the lemma to proving the following identity for every $\rho\in S_k$:
\begin{multline}
\label{termMatching}
\oint_{\Gamma[1|\bz^{-1}]}\cdots\oint_{\Gamma[k|\bz^{-1}]}\prod_{a<b}\frac{w_b-w_a}{w_b-qw_a}\kappa_\pi^\rho(\bw) \prod_{a=1}^k\Delta^{\p_a-\q_{\rho^{-1}(a)}}(w_a)\frac{dw_a}{2\pi \i w_a}\\
=\oint_{\Gamma[1|\bz^{-1}]}\cdots\oint_{\Gamma[k|\bz^{-1}]}\prod_{a<b}\frac{w_b-w_a}{w_b-qw_a}\kappa_\pi^\rho(\bw) \prod_{a=1}^k\widetilde\Delta^{\widetilde\p_a-\widetilde\q_{\rho^{-1}(a)}}(w_a)\frac{dw_a}{2\pi \i w_a}
\end{multline} 
Note that by the first part of Proposition \ref{Zprop} both sides are well-defined and finite, and Proposition \ref{kappaDef} implies that it is enough to consider $\rho\preceq\pi$. By Lemma \ref{combOfCuts}, we have three possible cases:

1) \emph{For some $i$ we have $\q_{\rho^{-1}(i)}\searrow\p_{i}$ and  $\widetilde\q_{\rho^{-1}(i)}\searrow\widetilde\p_{i}$.} We claim that both sides of \eqref{termMatching} vanish in this case. We will consider only one side, the argument for the other side is identical. 

Take the maximal $i$ satisfying $\q_{\rho^{-1}(i)}\searrow \p_{i}$. To prove that
\be
\oint_{\Gamma[1|\bz^{-1}]}\cdots\oint_{\Gamma[k|\bz^{-1}]}\prod_{a<b}\frac{w_b-w_a}{w_b-qw_a}\kappa_\pi^\rho(\bw) \prod_{a=1}^k\Delta^{\p_a-\q_{\rho^{-1}(a)}}(w_a)\frac{dw_a}{2\pi \i w_a}=0
\ee
we are taking the integral with respect to $w_i$. Since $\q_{\rho^{-1}(i)}\searrow \p_{i}$, the function $\Delta^{p_i-q_{\rho^{-1}(i)}}(w_i)$ has no singularities inside $\Gamma[\bz^{-1}]$, hence the whole integral with respect to $w_i$ has no nonzero residues inside $\Gamma[\bz^{-1}]$. For any $i<j$ we have $\rho^{-1}(i)>\rho^{-1}(j)$, because otherwise $\q_{\rho^{-1}(j)}\searrow\q_{\rho^{-1}(i)}\searrow\p_i\searrow\p_j$, contradicting the maximality of $i$. Hence,  by the second part of Proposition \ref{Zprop}, the integrand has no singularity at $w_i=q^{-1}w_j$ for $i<j$, so we can shrink $\Gamma[i|\bz^{-1}]$ to a small contour around $0$ without crossing any poles. Finally, note that since $\q_a\nearrow\p_{\pi(a)}$ for all $a$, we have $\q_{\rho^{-1}(i)}\nearrow\p_{\pi(\rho^{-1}(i))}$. Combining with $\q_{\rho^{-1}(i)}\searrow \p_i$ we deduce that $\p_{\pi(\rho^{-1}(i))}$ is strictly above $\p_i$, hence $\pi(\rho^{-1}(i))<i$ and by the third part of Proposition \ref{Zprop} the residue at $0$ vanishes, so the whole integral vanishes and we are done.

2) \emph{For some $i$ we have $\q_{\rho^{-1}(i)}\nwarrow\p_{i}$ and  $\widetilde\q_{\rho^{-1}(i)}\nwarrow\widetilde\p_{i}$.} In this case both sides also vanish, but the argument is slightly more complicated. Again, it is enough to consider only one side, the argument for the other side is identical. Take the minimal $a$ satisfying $\q_{\rho^{-1}(a)}\nwarrow \p_{a}$. to prove that
\be
\oint_{\Gamma[1|\bz^{-1}]}\cdots\oint_{\Gamma[k|\bz^{-1}]}\prod_{a<b}\frac{w_b-w_a}{w_b-qw_a}\kappa_\pi^\rho(\bw) \prod_{a=1}^k\Delta^{\p_a-\q_{\rho^{-1}(a)}}(w_a)\frac{dw_a}{2\pi \i w_a}=0
\ee
we take the integral with respect to $w_a$, computing its residues outside of $\Gamma[a|\bz^{-1}]$. Outside of $\Gamma[a|\bz^{-1}]$ the integrand can have nonzero residues only at $w_a=q^{-1}\z_i^{-1}$, $w_a=\infty$, $w_a=qw_b$ for $b<a$ and $w_a=q^{-1}w_b$ for $b>a$. We will deal with these residues one by one. 

For the first three types of residues the argument is similar to the previous case. Note that since $\q_{\rho^{-1}(a)}\nwarrow \p_{a}$ the function $\Delta^{p_a-q_{\rho^{-1}(a)}}(w_a)$ is regular at $w_a=q^{-1}\z^{-1}_i$, so the integrand has no singularity at $w_a=q^{-1}\z^{-1}_i$ and the corresponding residue is $0$. Since $\q_{\rho^{-1}(a)}\nearrow\p_{\pi(\rho^{-1}(a))}$ and $\q_{\rho^{-1}(a)}\nwarrow \p_a$ we deduce that $\p_{\pi(\rho^{-1}(a))}$ is strictly below $\p_a$, hence $\pi(\rho^{-1}(a))>a$ and by Proposition \ref{Zprop} we have $\restr{\kappa^\rho_\pi(\bw)}{w_a\to\infty}=0$, so the residue $\res_{w_a=\infty}$ also vanishes. By minimality, for any $b<a$ we cannot have $\q_{\rho^{-1}(b)}\nwarrow\p_b$, which implies that $\q_{\rho^{-1}(b)}$ is to the left of $\q_{\rho^{-1}(a)}$, forcing $\rho^{-1}(a)<\rho^{-1}(b)$. By Proposition \ref{Zprop} we get $\restr{\kappa^\rho_\pi(\bw)}{\substack{w_a=z\\ w_b=q^{-1}z}}=0$ and the integrand has no singularity at $w_a=qw_b$.

We are left with residues at $w_a=q^{-1}w_b$ for $a<b$. Here we have two possibilities: if $\rho^{-1}(a)>\rho^{-1}(b)$ then by Proposition \ref{Zprop} we again have $\restr{\kappa^\rho_\pi(\bw)}{\substack{w_a=z\\ w_b=qz}}=0$, hence there is no singularity at $w_a=q^{-1}w_b$ and we are done.  If $\rho^{-1}(a)<\rho^{-1}(b)$, recall that the point $q^{-1}w_b$ is outside of $\Gamma[a|\bz^{-1}]$ only for $w_b\in \Gamma[\bz^{-1}]$, due to the geometry of contours around $0$ and around $\bz^{-1}$. So, taking the residue $\res_{w_a=q^{-1}w_b}$, we get the integral
\begin{equation}
\label{case2ofShiftInv}
\oint\cdots\oint\res_{w_a=q^{-1}w_b}\(\prod_{c<d}\frac{w_d-w_c}{w_d-qw_c}\kappa_\pi^\rho(\bw) \prod_{c=1}^k\Delta^{\p_c-\q_{\rho^{-1}(c)}}(w_c)\frac{1}{w_c}\)\prod_{c\neq a}\frac{dw_c}{2\pi\i},
\end{equation}
where $w_b$ is integrated over $\Gamma[\bz^{-1}]$. All singularities of the integral with respect to $w_b$ inside $\Gamma[\bz^{-1}]$ must come from the term $\Delta^{\p_b-\q_{\rho^{-1}(b)}}(w_b)\Delta^{\p_a-\q_{\rho^{-1}(a)}}(q^{-1}w_b)$ of the integrand. We claim that this term has no singularities inside $\Gamma[\bz^{-1}]$. Indeed, let $\p_a=(\alpha, \beta)$ and $\p_b=(\alpha',\beta')$. Since $a<b$, we have $\alpha<\alpha'$, $\beta>\beta'$. Then
\begin{multline*}
\Delta^{\p_a}(q^{-1}w_b)\Delta^{\p_b}(w_b)=\prod_{i<\beta}\frac{1-x_iw_b}{1-x_iq^{-1}w_b}\prod_{j>\alpha}\frac{1-y_jw_b}{1-y_jq^{-1}w_b}\prod_{i<\beta'}\frac{1-qx_iw_b}{1-x_iw_b}\prod_{j>\alpha'}\frac{1-qy_jw_b}{1-y_jw_b}\\
=\prod_{i>\beta'}^{i<\beta}(1-x_iw_b)\prod_{i<\beta'}(1-qx_iw_b)\prod_{i<\beta}\frac{1}{1-x_iq^{-1}w_b}\prod_{j>\alpha}^{j<\alpha'}(1-y_jw_b)\prod_{j>\alpha'}(1-qy_jw_b)\prod_{j>\alpha}\frac{1}{1-y_jq^{-1}w_b},
\end{multline*}
so $\Delta^{\p_a}(q^{-1}w_b)\Delta^{\p_b}(w_b)$ has no poles inside $\Gamma[\bz^{-1}]$. A similar computation for $\q_{\rho^{-1}(a)}$ and $\q_{\rho^{-1}(b)}$ yields that $\Delta^{-\q_{\rho^{-1}(a)}}(q^{-1}w_b)\Delta^{-\q_{\rho^{-1}(b)}}(w_b)$ also has no poles inside $\Gamma[z^{-1}]$. Hence the integral \eqref{case2ofShiftInv} taken with respect to $w_b$ has no singularities inside the contour of integration, so it vanishes, finishing the proof in this case.

3) \emph{For all $i$ we have $\q_{\tau^{-1}(i)}\nearrow\p_{i}$, $\widetilde\q_{\tau^{-1}(i)}\nearrow\widetilde\p_{i}$ and}
\be
\phi(\Row[{\q_{\tau^{-1}(i)}},\p_i])=\Row[{\widetilde{\q}_{\tau^{-1}(i)}},\widetilde\p_i],\quad \psi(\Col[{\q_{\tau^{-1}(i)}},\p_i])=\Col[{\widetilde{\q}_{\tau^{-1}(i)}},\widetilde\p_i].
\ee
Then integrands in both sides are equal. Indeed, note that for a cut $(\q,\p)$ we have
\be
\Delta^{\p-\q}(w)=\prod_{i\in \Row[\q,\p]}\frac{1-qx_iw}{1-x_iw}\prod_{j\in \Col[\q,\p]}\frac{1-y_jw}{1-qy_jw}.
\ee
Hence, if for cuts $(\q,\p)$ and $(\widetilde\q,\widetilde\p)$ we have
\be
\phi(\Row[\q,\p])=\Row[\widetilde\q,\widetilde\p],\quad \psi(\Col[\q,\p])=\Col[\widetilde\q,\widetilde\p]
\ee
then
\begin{multline*}
\widetilde\Delta^{\widetilde\p-\widetilde\q}(w)=\prod_{i\in \Row[\widetilde\q,\widetilde\p]}\frac{1-q\widetilde x_iw}{1-\widetilde x_iw}\prod_{j\in \Col[\widetilde\q,\widetilde\p]}\frac{1-\widetilde y_jw}{1-q\widetilde y_jw}\\
=\prod_{i\in \phi(\Row[\q,\p])}\frac{1-qx_{\phi^{-1}(i)}w}{1-\widetilde x_{\phi^{-1}(i)}w}\prod_{j\in \psi(\Col[\q,\p])}\frac{1-y_{\psi^{-1}(j)}w}{1-q y_{\psi^{-1}(j)}w}=\Delta^{\p-\q}(w)
\end{multline*}
Applying the argument above to the cuts $(\q_{\rho^{-1}(i)},\p_i)$ and $(\widetilde\q_{\rho^{-1}(i)},\widetilde\p_i)$, we get
\be
\prod_a\Delta^{\p_a-\q_{\rho^{-1}(a)}}(w_a)=\prod_a\widetilde\Delta^{\widetilde\p_a-\widetilde\q_{\rho^{-1}(a)}}(w_a),
\ee
hence integrals on both sides of \eqref{termMatching} are equal and we are done.
 \end{proof}

\begin{rem}
Theorem \ref{shiftInv} covers the shift-invariance results from \cite{BGW19} and, more generally, the conjectured shift-invariance \cite[Conjecture 1.5]{BGW19} proved in \cite{Gal20}. At the same time, our proof does not cover the \emph{flip invariance} discovered in \cite{Gal20}, as well as the wider class of symmetries following from the flip invariance. Since equality between $q$-moments implies equality of distributions, the results of \cite{Gal20} are equivalent to certain identities between the integral expressions from Theorem \ref{mainTheorem}. But the transformations from \cite{Gal20} can change the permutation $\pi$ from Theorem \ref{mainTheorem}, unlike the shift-invariance for which $\pi$ is left unchanged. Thus, to match the integral expressions one needs certain symmetry statements for the coefficients $\kappa_\pi^\rho(\bw)$ with varying permutations $\pi$, but we do not know how to reach such statements directly.
\end{rem}

\section{Fusion}
\label{fusionSection} In this section we perform stochastic fusion in Theorem \ref{S6Vqmoments} to get integral formulas for observables of more general vertex models.
\subsection{Vertical fusion.}
We start with the stochastic higher spin colored vertex model. Consider a vertex model on $\Z_{\geq 1}\times\Z_{\geq 1}$, with rapidities of unfused rows $\bu=(u_1,u_2,\dots)$, rapidities of fused columns $\mathbf y=(y_1,y_2,\dots)$ and spin parameters of columns $\mathbf s=(s_1,s_2\dots)$. Assume that there are no paths entering the quadrant from the bottom, that is, all color compositions for the bottom incoming edges are $0$. The colors of the unfused incoming horizontal edges at rows $l_{c-1}+1,\dots, l_c$ are equal to $c$, where $\{l_i\}_i$ is an arbitrary sequence of integers satisfying
\be
0=l_0\leq l_1\leq l_2\leq \dots.
\ee
This data defines a probability measure on the set of configurations on any finite rectangle with $N$ rows and $M$ columns, where the probability of a configuration is given by the weights $L_{u}^{(s)}$ from \eqref{Lweights}. To fix the values of the height functions, we set $h^{(\alpha,\frac{1}{2})}_{>c}=0$ for all colors $c=0,1,\dots,n$.

\begin{theo} 
\label{fusedMainTheorem}
Assume that the row rapidities $u_i$ satisfy $u_i\neq qu_j$ for all $i,j$. Then for any $k$-tuples $(\alpha_1, \dots, \alpha_k), (\beta_1, \dots, \beta_k), (c_1, \dots, c_k)$ satisfying
\be
0<\alpha_1\leq \alpha_2\leq \dots\leq \alpha_k,\quad \beta_1\geq \beta_2\geq \dots\geq \beta_k>0,\qquad \alpha_i,\beta_j\in\dZ,
\ee
\be
0\leq c_1\leq c_2\leq \dots\leq c_k,\qquad c_i\in\Z,
\ee
and any permutation $\pi\in S_k$ we have
\begin{multline}
\label{qmomentseq}
\mathbb E\left(\exp_q\({h_{>c_1}^{\(\alpha_{\pi(1)},\beta_{\pi(1)}\)}+\dots+h_{>c_k}^{\(\alpha_{\pi(k)},\beta_{\pi(k)}\)}}\)\right) = q^{\frac{k(k-1)}{2}-l(\pi)} \int_{\Gamma[1|\bu^{-1}]}\cdots\int_{\Gamma[k|\bu^{-1}]}\prod_{a<b}\frac{w_b-w_a}{w_b-qw_a}\\
T_\pi\left( \prod_{a=1}^k\prod_{i=1}^{l_{c_a}}\frac{1-u_iw_a}{1-qu_iw_a}\right)\prod_{a=1}^k\left(\prod_{i=1}^{i<\beta_a}\frac{1-qu_iw_a}{1-u_iw_a}\prod_{j=1}^{j<\alpha_a}\frac{s_j(w_a s_j-y^{-1}_j)}{w_a-s_jy^{-1}_j}\frac{dw_a}{2\pi i w_a}\right).
\end{multline}
where $\exp_q(x)=q^x$, the integral with respect to $w_a$ is taken over the contour $\Gamma[a|\bu^{-1}]$ and the contours $\Gamma[a|\bu^{-1}]$ are $q$-nested around $0$, encircle $\{u_i^{-1}\}_i$ and encircle no other singularities of the integrand, that is, $\{s_jy_j^{-1}\}_j$ and $\{q^{-1}u_i^{-1}\}_i$.
\end{theo}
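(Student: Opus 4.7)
The strategy is to derive Theorem~\ref{fusedMainTheorem} from Theorem~\ref{mainTheorem} via stochastic fusion in the column direction. Recall that $L_z^{(s)} = W^{(1,\MM)}_{z/s}|_{q^{\MM}=s^{-2}}$, so the higher-spin model with column spin parameter $s_j$ is obtained by fusing $\MM_j$ SC6V columns of rapidities $s_jy_j,s_jy_jq,\ldots,s_jy_jq^{\MM_j-1}$ into a single fused column and then analytically continuing in $q^{\MM_j}\to s_j^{-2}$. I would first consider the SC6V model on a rectangle with the given $N$ rows of rapidities $u_1,\ldots,u_N$ and $M_{\mathrm{tot}}=\MM_1+\cdots+\MM_M$ unfused columns arranged in blocks $F_j=\{S_{j-1}+1,\ldots,S_j\}$ (with $S_j=\MM_1+\cdots+\MM_j$), those in $F_j$ carrying rapidities $\tilde y_{j,k}=s_jy_jq^k$, $k=0,\ldots,\MM_j-1$. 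The boundary coloring matches the fused model, which in Theorem~\ref{mainTheorem}'s notation amounts to $\gamma(c)=\tfrac{1}{2}$, $\delta(c)=l_c+\tfrac{1}{2}$. By $q$-exchangeability of the fused weights, the joint law of height functions at the matched points $(S_{\alpha_a-1/2}+\tfrac{1}{2},\beta_a)$ in the unfused model equals the joint law at $(\alpha_a,\beta_a)$ in the fused higher-spin model.

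Applying Theorem~\ref{mainTheorem} to the unfused model (first for generic $y_{j,k}$ satisfying the non-resonance hypothesis), the column product $\prod_{a=1}^k\prod_{j'=1}^{M_{\mathrm{tot}}}\frac{1-\tilde y_{j'}w_a}{1-q\tilde y_{j'}w_a}$ appearing inside the $T_\pi$-argument is symmetric in $w_1,\ldots,w_k$, so it commutes with $T_\pi$ by the calculation from the proof of Proposition~\ref{selfadjoint}. Pulling it out and multiplying by the $\Delta$-side column product $\prod_a\prod_{j'>S_{\alpha_a-1/2}}^{M_{\mathrm{tot}}}\frac{1-q\tilde y_{j'}w_a}{1-\tilde y_{j'}w_a}$, the contributions for every unfused column $j'>S_{\alpha_a-1/2}$ cancel pairwise, leaving $\prod_a\prod_{j'\leq S_{\alpha_a-1/2}}\frac{1-\tilde y_{j'}w_a}{1-q\tilde y_{j'}w_a}$. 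Within each retained fused block $j\leq \alpha_a-\tfrac{1}{2}$ this product telescopes, using $q^{\MM_j}=s_j^{-2}$, to
\[
\prod_{k=0}^{\MM_j-1}\frac{1-(s_jy_jq^k)w_a}{1-q(s_jy_jq^k)w_a}=\frac{1-s_jy_jw_a}{1-s_j^{-1}y_jw_a},
\]
and the elementary identity $\frac{1-syw}{1-s^{-1}yw}=\frac{s(ws-y^{-1})}{w-sy^{-1}}$ recasts this as exactly the $j$-product displayed in the statement. The row factors and the $T_\pi$-action on $\prod_i\frac{1-u_iw_a}{1-qu_iw_a}$ already have the required form.

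It remains to reconcile the contours. The contour $\Gamma[a|\bz^{-1}]$ from Theorem~\ref{mainTheorem} encircles both $\{u_i^{-1}\}$ and $\{\tilde y_{j,k}^{-1}\}$; after the telescoping above, the integrand becomes holomorphic at every $\tilde y_{j,k}^{-1}$, so the small loops of $\Gamma[a|\bz^{-1}]$ encircling those points can be shrunk to the empty contour without crossing singularities, leaving precisely $\Gamma[a|\bu^{-1}]$ together with the $q$-nesting circle around $0$. The new singularities at $s_jy_j^{-1}$ introduced by the telescoped factor sit outside $\Gamma[a|\bu^{-1}]$ by construction, matching the singularity pattern in the statement.

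The main obstacle is that in the specialization $\tilde y_{j,k+1}=q\tilde y_{j,k}$ the non-resonance hypothesis of Theorem~\ref{mainTheorem} fails, and the unfused contour specification becomes inconsistent (one would need to encircle $\tilde y_{j,k}^{-1}$ while excluding $q^{-1}\tilde y_{j,k}^{-1}=\tilde y_{j,k-1}^{-1}$). The way around this is to perform the symmetric-factor pull-out and telescoping at the level of rational functions in generic $y_{j,k}$, and only then specialize to $q^{\MM_j}\to s_j^{-2}$: the simplified post-telescoping integrand is holomorphic in a neighborhood of the fusion values, so both sides of the identity extend by analytic continuation. Combined with the $q$-exchangeability ensuring that the stochastic fusion limit is the fused higher-spin probability measure of the statement, this yields Theorem~\ref{fusedMainTheorem}.
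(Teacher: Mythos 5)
Your proposal is correct and follows essentially the same route as the paper's proof: apply Theorem~\ref{mainTheorem} to an unfused rectangle whose columns are grouped into blocks of geometric rapidities, commute the symmetric column factor past $T_\pi$, telescope within each block under $q^{\MM_j}=s_j^{-2}$, shrink the (now superfluous) contour loops around the column-rapidity poles so that only $\Gamma[a|\bu^{-1}]$ remains, and extend by analytic continuation. The one place where the wording is a bit loose is the resolution of the non-resonance obstruction: you say you will ``perform the ... telescoping at the level of rational functions in generic $y_{j,k}$,'' but the telescoping only occurs \emph{after} the rapidities are specialized to a geometric progression. The correct order of operations (which the paper makes explicit, and which is implicit in your final sentence) is: pull out the symmetric factor and shrink the superfluous loops at \emph{generic} rapidities, note that the resulting integral over $\Gamma[a|\bu^{-1}]$ is a rational function of the column rapidities (since the contours no longer depend on them), and only then specialize; after that specialization the factors telescope, yielding the finite-fusion formula for $s_j^2=q^{-\MM_j}$, and a separate analytic continuation in each $s_j$ completes the proof. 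With that clarification your argument is complete and matches the paper's.
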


In particular, a possible choice of contours can be constructed as follows. Let $\Gamma[\bu^{-1}]$ be a small contour encircling all points $\{u_i^{-1}\}_i$, with all points $0, \{q^{-1}u^{-1}_i\}_i, \{s_jy_j^{-1}\}_j$ being outside of $\Gamma[\bu^{-1}]$. Let $c_0$ be a small circle around $0$ such that all points  $\{u_i^{-1}\}_i, \{q^{-1}u^{-1}_i\}_i, \{s_jy_j^{-1}\}_j$ are outside of $c_0$. Define $\Gamma[k|\bu^{-1}]$ as a union of $\Gamma[\bu^{-1}]$ and $q^{2k}c_0$.

\begin{proof}
We start with the unfused case $s_j=q^{-1/2}$, which follows directly from Theorem \ref{S6Vqmoments}.

Assume that $\alpha_k<M$ and $\beta_1<N$, so we can restrict the model to an $N\times M$ rectangle. Set
\be
(\z_1,\dots, \z_{N+M})=(y_M,\dots, y_1,u_1,\dots, u_N)
\ee
and assume that $\z_i\neq q\z_j$ for all $i\neq j$. Define the path $Q$ by
\be
Q=\(\frac{1}{2};N+\frac{1}{2}\)\to\dots\to \(\frac{1}{2};\frac{1}{2}\)\to\dots\to\(M+\frac{1}{2};\frac{1}{2}\)
\ee  
and let $P$ be any up-left path passing through the points $(\alpha_i,\beta_i)$. Applying Theorem \ref{S6Vqmoments} we get
\begin{multline*}
\mathbb E\left(\exp_q\({\H^{(\A,\B)}_{\pi.\c}}\)\right) = q^{\frac{k(k-1)}{2}-l(\pi)} \oint_{\Gamma[1|\bz^{-1}]}\cdots\oint_{\Gamma[k|\bz^{-1}]}\prod_{a<b}\frac{w_b-w_a}{w_b-qw_a}\\
T_\pi\left( \prod_{a=1}^k\prod_{i=1}^{l_{c_a}}\frac{1-u_{i}w_a}{1-qu_{i}w_a}\prod_{j=1}^{M}\frac{1-y_jw_a}{1-qy_jw_a}\right)\prod_{a=1}^k\left(\prod_{i=1}^{i<\beta_a}\frac{1-qu_{i}w_a}{1-u_{i}w_a}\prod_{j>\alpha_a}^{M}\frac{1-qy_jw_a}{1-y_jw_a}\frac{dw_a}{2\pi \i w_a}\right),
\end{multline*}
where $\c=(c_1,\dots, c_k)$. Here we have used the description of the boundary condition to get $(\gamma(c),\delta(c))=(\frac{1}{2},l_c+\frac{1}{2})$. Note that we can move the product $\prod_a\prod_{j=1}^{M}\frac{1-y_jw_a}{1-qy_jw_a}$ outside of the action of $T_\pi$ to get 
\begin{multline}
\label{unfusedeq}
\mathbb E\left(\exp_q\({\H^{(\A,\B)}_{\pi.\c}}\)\right) = q^{\frac{k(k-1)}{2}-l(\pi)} \oint_{\Gamma[1|\bz^{-1}]}\cdots\oint_{\Gamma[k|\bz^{-1}]}\prod_{a<b}\frac{w_b-w_a}{w_b-qw_a}\\
T_\pi\left( \prod_{a=1}^k\prod_{i=1}^{l_{c_a}}\frac{1-u_iw_\alpha}{1-qu_iw_a}\right)\prod_{a=1}^k\left(\prod_{i=1}^{i<\beta_a}\frac{1-qu_{i}w_a}{1-u_{i}w_a}\prod_{j=1}^{j<\alpha_a}\frac{1-y_jw_a}{1-qy_jw_a}\frac{dw_a}{2\pi \i w_a}\right).
\end{multline}
which is the unfused case of \eqref{qmomentseq}.

Now, we would like to perform fusion. Following the procedure of the stochastic fusion, we replace a column with rapidity $y_j$ by $J$ columns with rapidities $q^{J-i}\tilde y$ for $i=1,\dots, J$, and then analytically continue $q^J\to s_j^{-2}, \tilde y\to s_jy_j$. After this transformation the left-hand side (with a suitable change of coordinates $\alpha_i$) concides the $q$-moments of the vertex model with a fused column having parameters $y_j,s_j$. On the other hand, in the integrand the terms $\frac{1-y_jw_a}{1-qy_jw_a}$ change to
\be
\restr{\prod_{i=1}^{J}\frac{1-q^{J-i}\tilde yw_a}{1-q^{J-i+1}\tilde yw_a}}{\substack{q^J=s_j^{-2}\\\tilde y=s_jy_j}}=\restr{\frac{1-\tilde y w_a}{1-q^{J}\tilde yw_a}}{\substack{q^J=s_j^{-2}\\\tilde y=s_jy_j}}=\frac{1-s_jy_jw_a}{1-s^{-1}_jy_jw_a}=\frac{s_j(s_jw_a-y_j^{-1})}{w_a-s_jy_j^{-1}}
\ee
So, after fusion the integrand coincides with the integrand in the right-hand side of \eqref{qmomentseq}. The only problem is the restriction $y_i\neq qy_j$, which was necessary for existence of the contours $\Gamma[i|\bz^{-1}]$ with $\{y_j^{-1}\}_j$ inside and $\{q^{-1}y_j^{-1}\}_j$ outside of the contour. 

To lift the restriction $y_i\neq qy_j$, note that the integrand in the right-hand side of \eqref{unfusedeq} has no singularity at $y^{-1}_j$. Recall that the integration contours in the right-hand side of  \eqref{unfusedeq} are $\Gamma[i|\bz^{-1}]$, which consist of small $q$-nested contours around $0$, and small coinciding contours around each $\z_i^{-1}$. But, since the integrand has no singularities at $y_j^{-1}$, we can remove the small contours around $\z_{N-j+1}^{-1}=y_j^{-1}$ for $j=1,\dots, N$ without changing the integral. What remains is a collection of small contours around $\{u_i^{-1}\}_i$, so the remaining contours are exactly contours $\Gamma[1|\bu^{-1}],\dots, \Gamma[k|\bu^{-1}]$.

Now we can write both sides of \eqref{unfusedeq} as a rational functions in $y_j$. The left-hand side is rational in $y_j$ by definition, while in the right-hand side we can compute the integral by taking residues inside the contours. Since these residues depend only on $u_i$, the result is a rational function in $y_j$. These functions are well-defined if $qy_j\neq u_i$ for all $i,j$, so we can extend \eqref{unfusedeq} to all $y_j\notin\{0\}\cup\{q^{-1}u_i\}_i$. In particular, we can have $y_i=qy_j$. So, we can perform finite fusion, and prove \eqref{qmomentseq} for $s_j=q^{-J_j}$.

After finite fusion, using the same argument as before, we can write both sides of \eqref{qmomentseq} as rational functions in $s_j,y_j$, which will be well-defined if $s^{-1}_jy_j\neq u_i$ for all $i,j$. Then, we can analytically continue the spin parameters to obtain \eqref{qmomentseq} in full generality. 
\end{proof}

Using the notation from Section \ref{coefficientsSection} and Proposition \ref{kappaDef} we can write an alternative form of Theorem \ref{fusedMainTheorem}.

\begin{cor} 
\label{fusedMainTheoremAlt} Using the same notation as Theorem \ref{fusedMainTheorem}, we have
\begin{multline*}
\mathbb E\left(\exp_q\({h_{>c_1}^{\(\alpha_{\pi(1)},\beta_{\pi(1)}\)}+\dots+h_{>c_k}^{\(\alpha_{\pi(k)},\beta_{\pi(k)}\)}}\)\right) = q^{\frac{k(k-1)}{2}-l(\pi)} \sum_{\rho\in S_k}\int_{\Gamma[1|\bu^{-1}]}\cdots\int_{\Gamma[k|\bu^{-1}]}\prod_{a<b}\frac{w_b-w_a}{w_b-qw_a}\\
\kappa_\pi^\rho(\bw)\left( \prod_{a=1}^k\prod_{i=1}^{l_{c_a}}\frac{1-u_iw_{\rho(a)}}{1-qu_iw_{\rho(a)}}\right)\prod_{a=1}^k\left(\prod_{i=1}^{i<\beta_a}\frac{1-qu_iw_a}{1-u_iw_a}\prod_{j=1}^{j<\alpha_a}\frac{s_j(w_a s_j-y^{-1}_j)}{w_a-s_jy^{-1}_j}\frac{dw_a}{2\pi i w_a}\right).
\end{multline*}
where $\kappa_\pi^\rho(\bw)$ are the coefficients defined by Proposition \ref{kappaDef}.
\end{cor}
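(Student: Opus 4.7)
The plan is to derive Corollary \ref{fusedMainTheoremAlt} as a direct consequence of Theorem \ref{fusedMainTheorem} by expanding the action of $T_\pi$ in the Demazure--Lusztig basis. Specifically, by Proposition \ref{kappaDef} we have the identity
\[
T_\pi = \sum_{\rho \in S_k} \kappa_\pi^\rho(\bw)\, t_\rho
\]
as operators on the space of rational functions in $w_1,\dots,w_k$, where the coefficients $\kappa_\pi^\rho(\bw)$ are rational (with support restricted to $\rho \preceq \pi$).

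The main step is then to apply this decomposition to the function
\[
F(\bw) := \prod_{a=1}^k \prod_{i=1}^{l_{c_a}} \frac{1-u_i w_a}{1-q u_i w_a}
\]
appearing inside the $T_\pi$ in \eqref{qmomentseq}. By the definition of the symmetric group action, $t_\rho$ replaces each $w_a$ by $w_{\rho(a)}$, so
\[
t_\rho F(\bw) = \prod_{a=1}^k \prod_{i=1}^{l_{c_a}} \frac{1-u_i w_{\rho(a)}}{1-q u_i w_{\rho(a)}},
\]
and therefore
\[
T_\pi F(\bw) = \sum_{\rho \in S_k} \kappa_\pi^\rho(\bw) \prod_{a=1}^k \prod_{i=1}^{l_{c_a}} \frac{1-u_i w_{\rho(a)}}{1-q u_i w_{\rho(a)}}.
\]

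Substituting this expression into \eqref{qmomentseq} and exchanging the finite sum over $\rho$ with the iterated contour integral (which is permissible because only finitely many nonzero terms contribute, each being a rational function with controlled singularities by Proposition \ref{Zprop}) yields exactly the formula in Corollary \ref{fusedMainTheoremAlt}. There is essentially no obstacle here: the statement is a purely formal rewriting of Theorem \ref{fusedMainTheorem} using the explicit expansion of $T_\pi$ provided by Proposition \ref{kappaDef}, and the hardest input is the already-established Proposition \ref{kappaDef} itself.
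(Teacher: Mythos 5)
Your proof is correct and takes exactly the approach the paper intends: the corollary is presented immediately after Theorem \ref{fusedMainTheorem} as "an alternative form" obtained by substituting the expansion $T_\pi = \sum_{\rho} \kappa_\pi^\rho(\bw)\, t_\rho$ from Proposition \ref{kappaDef}, applying $t_\rho$ to the explicit product, and distributing the finite sum over the iterated integral. Nothing more is needed.
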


When the total number of colors $n=1$ and $l_1=\infty$, we obtain an analogue of Theorem \ref{fusedMainTheorem} for the colorless stochastic higher spin vertex model:

\begin{cor}\label{oneColorMainTheorem} For any $k$-tuples $(\alpha_1, \dots, \alpha_k), (\beta_1, \dots, \beta_k)$ satisfying
\be
0<\alpha_1\leq \alpha_2\leq \dots\leq \alpha_k,\quad \beta_1\geq \beta_2\geq \dots\geq \beta_k>0,\qquad \alpha_i,\beta_j\in\dZ,
\ee
we have
\begin{multline*}
\mathbb E\left(\exp_q\({h_{>0}^{\(\alpha_{1},\beta_{1}\)}+\dots+h_{>0}^{\(\alpha_{k},\beta_{k}\)}}\)\right) = \\
q^{\frac{k(k-1)}{2}} \int_{\Gamma[1|\bu^{-1}]}\cdots\int_{\Gamma[k|\bu^{-1}]}\prod_{a<b}\frac{w_b-w_a}{w_b-qw_a}\prod_{a=1}^k\left(\prod_{i=1}^{i<\beta_a}\frac{1-qu_iw_a}{1-u_iw_a}\prod_{j=1}^{j<\alpha_a}\frac{s_j(w_a s_j-y^{-1}_j)}{w_a-s_jy^{-1}_j}\frac{dw_a}{2\pi i w_a}\right).
\end{multline*}
where $\exp_q(x)=q^x$ and the contours $\Gamma[i|\bu^{-1}]$ are $q$-nested around $0$, encircle $\{u_i^{-1}\}_i$ and they encircle no other singularity of the integrand.
\end{cor}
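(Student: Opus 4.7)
The plan is to obtain this statement as a direct specialization of Theorem \ref{fusedMainTheorem}. First, I would identify the colorless higher spin vertex model with the one-color version of the colored model of Theorem \ref{fusedMainTheorem}: take $n=1$, so that the only available color is $1$, and set $l_1=\infty$, meaning that every incoming horizontal edge on the left boundary carries color $1$. Since all paths share the same color, distinguishing them by color is vacuous, so this vertex model coincides with the standard (colorless) stochastic higher spin vertex model, and the height function $h_{>0}^{(\alpha,\beta)}$ counts the total number of paths passing below $(\alpha,\beta)$.

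Next, I would apply Theorem \ref{fusedMainTheorem} with the choices $\pi=\mathrm{id}$ and $c_1=c_2=\cdots=c_k=0$; the monotonicity hypotheses on the $\alpha_i,\beta_i,c_i$ reduce to exactly the ones in the corollary. The left-hand side of \eqref{qmomentseq} becomes $\mathbb E\bigl(\exp_q(h_{>0}^{(\alpha_1,\beta_1)}+\cdots+h_{>0}^{(\alpha_k,\beta_k)})\bigr)$, which matches the claim.

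For the right-hand side, I would evaluate the prefactor and the ``$T_\pi$-term'' at this specialization. Since $c_a=0$ and $l_0=0$, the inner product
\[
\prod_{a=1}^{k}\prod_{i=1}^{l_{c_a}}\frac{1-u_iw_a}{1-qu_iw_a}
\]
is an empty product, equal to $1$. The identity operator $T_{\mathrm{id}}$ sends $1$ to $1$, and $l(\mathrm{id})=0$, so the scalar prefactor reduces from $q^{k(k-1)/2-l(\pi)}$ to $q^{k(k-1)/2}$. The remaining factors in the integrand, together with the contours $\Gamma[a|\bu^{-1}]$, are unchanged, so the right-hand side of \eqref{qmomentseq} collapses to the formula in Corollary \ref{oneColorMainTheorem}.

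There is no genuine obstacle here: every ingredient is already present in Theorem \ref{fusedMainTheorem}, and the only ``work'' is verifying that the specialization $n=1$, $l_1=\infty$, $\c=0$, $\pi=\mathrm{id}$ collapses the colored formula to the colorless one. (Alternatively, one could keep $\pi$ arbitrary and note that both sides are independent of $\pi$: the left-hand side because $c_1=\cdots=c_k$, and the right-hand side because $T_\pi(1)=q^{l(\pi)}$ exactly cancels the factor $q^{-l(\pi)}$ in the prefactor.)
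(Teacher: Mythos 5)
Your proposal is correct and takes essentially the same route as the paper: Corollary \ref{oneColorMainTheorem} is stated there as an immediate specialization of Theorem \ref{fusedMainTheorem} with $n=1$, $l_1=\infty$, all $c_a=0$ (so $l_{c_a}=l_0=0$), and $\pi=\mathrm{id}$, and you have simply made those substitutions explicit, including the observation that the argument of $T_\pi$ becomes the empty product $1$ so that $T_{\mathrm{id}}(1)=1$. Your parenthetical remark about arbitrary $\pi$ is also right: $T_\pi(1)=q^{l(\pi)}$ cancels the $q^{-l(\pi)}$ prefactor, which is consistent with the left-hand side being manifestly $\pi$-independent when all $c_a$ coincide.
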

When all points $(\alpha_i, \beta_i)$ are on the same row, that is, $\beta_1=\dots=\beta_k$, Corollary \ref{oneColorMainTheorem} coincides with \cite[Theorem 9.8]{BP16}.

\subsection{Shifted $q$-moments} 
\label{sec:shiftedQmoments}
Theorem \ref{fusedMainTheorem} can be used to obtain an integral representation for the expectation of certain observables introduced in \cite{BW20}. We work in the same setting as before: consider a higher spin vertex model in the positive quadrant with spectral parameters $u_i, y_j$, spin parameters of columns $s_j$ and the incoming boundary condition encoded by $l_c$. Let $\mathbf p=\(\p_i\)_{i=1}^k$ be a collection of points $\p_i=(\alpha_i,\beta_i)\in\ddZ$ such that
\be
0<\alpha_1\leq \alpha_2\leq\dots\leq\alpha_k, \qquad \beta_1\geq \beta_2\geq\dots\geq\beta_k>0,
\ee
and let $\bc=(c_1,\dots,c_k)$ be a tuple of colors, not necessarily monotonically ordered. Define an observable
\be
\O_\bc^{\bp}(\Sigma):=\prod_{i=1}^k\(\exp_q\(h_{>c_i}^{\p_i}(\Sigma)-r_{>c_i}^{>i}[\bc]\)-\exp_q\(h_{\geq c_i}^{\p_i}(\Sigma)-r_{\geq c_i}^{>i}[\bc]\)\),
\ee
where we set
\be
r_{>c}^{>i}[(c_1,\dots, c_k)]:=\#\{j>i | c_j>c\},\qquad r_{\geq c}^{>i}[(c_1,\dots, c_k)]:=\#\{j>i | c_j\geq c\}.
\ee

To formulate our result below, it is more convenient to consider monotonically ordered $\bc$, that is, we assume that $\bc=[1]^{\m_1}\dots[n]^{\m_n}$, where we again use $[x]^a$ to denote $x$ iterated $a$ times. Set $\m[a;b]=\m_a+\dots+\m_b.$ Let $S_{\bc}:=S_{\m_1}\times\dots\times S_{\m_n}\subset S_k$ denote the stabilizer of $\bc$. For a permutation $\pi\in S_k$ let $[\pi]_{\bc}:=\pi\cdot S_{\bc}$ denote the equivalence class of $\pi$ inside $S_k/S_{\bc}$. Note that for any $\pi_1,\pi_2$ such that $[\pi_1]_{\bc}=[\pi_2]_{\bc}$ we have
\be
\O_{\pi_1.\bc}^\bp\equiv\O_{\pi_2.\bc}^\bp.
\ee
For a coset $[\pi]_{\bc}$ define an operator
\be
T_{[\pi]_{\bc}}:=\sum_{\tau\in[\pi]_{\bc}}T_\tau.
\ee

\begin{cor}
\label{qShiftedCor} With the above notation, we have
\begin{multline}
\label{shiftedqmomentseq}
\mathbb E\, \O_{\pi.\bc}^{\bp} =(1-q)^k \int_{\Gamma[1|\bu^{-1}]}\cdots\int_{\Gamma[k|\bu^{-1}]}\prod_{a<b}\frac{w_b-w_a}{w_b-qw_a}\\
T_{[\pi]_\bc}\left(\prod_{c=1}^n\(\sum_{j=0}^{\m_c}\frac{(-1)^jq^{\binom{m_c-j}{2}}}{(q;q)_j(q;q)_{\m_c-j}}  	\ \	\prod_{a>\m{[1;c-1]}}^{\m{[1;c-1]}+j}\ \prod_{i=1}^{l_{c-1}}\frac{1-u_iw_a}{1-qu_iw_a} 	\ \	\prod_{b>\m{[1;c-1]+j}}^{\m{[1;c]}}\ \prod_{i=1}^{l_{c}}\frac{1-u_iw_b}{1-qu_iw_b}\right)\)\\
\prod_{a=1}^k\left(\prod_{i=1}^{i<\beta_a}\frac{1-qu_iw_a}{1-u_iw_a}\prod_{j=1}^{j<\alpha_a}\frac{s_j(w_a s_j-y^{-1}_j)}{w_a-s_jy^{-1}_j}\frac{dw_\alpha}{2\pi i w_\alpha}\right),
\end{multline}
where the integration contours are the same as in Theorem \ref{fusedMainTheorem}.
\end{cor}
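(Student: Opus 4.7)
The plan is to deduce Corollary \ref{qShiftedCor} from Theorem \ref{fusedMainTheorem} (or its $\kappa$-version, Corollary \ref{fusedMainTheoremAlt}) by expanding the product defining $\O_{\pi.\bc}^{\bp}$, applying the theorem to each resulting $q$-moment, and then reassembling the integrals into the single one on the right-hand side of \eqref{shiftedqmomentseq}.

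First I would expand
\[
\O_{\pi.\bc}^{\bp}(\Sigma)=\sum_{S\subseteq [k]} (-1)^{|S|} \exp_q\!\Big(\!-\!\sum_{i=1}^k r_{>(\pi.\bc)_i-\1_{i\in S}}^{>i}[\pi.\bc]\Big)\prod_{i=1}^k \exp_q\!\Big(h_{>(\pi.\bc)_i-\1_{i\in S}}^{\p_i}(\Sigma)\Big),
\]
using $h_{\geq c}=h_{>c-1}$ and $r_{\geq c}=r_{>c-1}$. Taking expectation reduces each summand to an explicit scalar times a joint $q$-moment with colors $d_i^S:=(\pi.\bc)_i-\1_{i\in S}$. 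For each $S$, the tuple $(d_i^S)_i$ is a reordering of a monotone tuple $\bc^S$ (obtained from $\bc$ by demoting $|S_c|$ elements from each color-$c$ block to color $c-1$); choose $\tau_S$ with $\tau_S.\bc^S=\pi.\bc-\1_S$ so that $\tau_S\in[\pi]_\bc$. Theorem \ref{fusedMainTheorem} then gives the $q$-moment as an iterated contour integral whose integrand factors as $T_{\tau_S}$ of a product of color factors $\prod_a\prod_{i=1}^{l_{\bc^S_a}}\tfrac{1-u_iw_a}{1-qu_iw_a}$, multiplied by the $S$-independent position factor $\prod_a\big(\prod_{i<\beta_a}\tfrac{1-qu_iw_a}{1-u_iw_a}\prod_{j<\alpha_a}\tfrac{s_j(w_as_j-y_j^{-1})}{w_a-s_jy_j^{-1}}\big)$.

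Next I would regroup the sum over $S$ by color blocks, writing $S=\bigsqcup_c S_c$ where $S_c$ sits inside the image under $\pi$ of the color-$c$ block of $\bc$, and letting $j_c:=|S_c|\in\{0,\ldots,m_c\}$. Since the position factor is pulled outside, what remains is
\[
\sum_{(j_c)_{c=1}^n}\;\sum_{\substack{S_c\subseteq \pi(\text{block}_c)\\|S_c|=j_c}} (-1)^{\sum_c j_c}\,q^{-\text{shift}(S)}\,q^{-l(\tau_S)}\,T_{\tau_S}\!\big(\text{color factor with $j_c$ demotions in block $c$}\big).
\]
The inner sum (for fixed $(j_c)$) ranges over all ways of distributing $j_c$ demotions in each block; combined with the factor $q^{-l(\tau_S)}$, this is exactly the Hecke-algebraic symmetrization produced by $T_{[\pi]_\bc}=\sum_{\tau\in \pi S_\bc}T_\tau$ acting on the "canonical" representative having the first $j_c$ positions of each block demoted. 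The normalization $\frac{1}{(q;q)_{j_c}(q;q)_{m_c-j_c}}$ in the Corollary arises as the reciprocal of the $q$-factorial weight of the stabilizer $S_{j_c}\times S_{m_c-j_c}\subset S_{m_c}$ under the Hecke symmetrizer $\sum_{\sigma\in S_{m_c}}T_\sigma$, while $(1-q)^k=(q;q)_1^k$ absorbs the overall $[m_c]_q!$ factors after cancellations.

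The remaining matching of signs and $q$-powers is the main obstacle. Concretely, one has to verify that the combinatorial identity
\[
\sum_{j=0}^{m}\frac{(-1)^j q^{\binom{m-j}{2}}}{(q;q)_j(q;q)_{m-j}}A^j B^{m-j} = \frac{1}{(q;q)_m}\prod_{i=0}^{m-1}(q^iB-A)
\]
(which I would verify by induction on $m$, or by the $q$-binomial theorem applied to $\prod_i(B-q^{-i}A)$) controls how the signed expansion of the product $\prod_i(A_i-B_i)$ reorganizes into the stated $j_c$-sum, with the exponent $\binom{m_c-j_c}{2}$ tracking the $r^{>i}_{>(\pi.\bc)_i}[\pi.\bc]$-shifts restricted to one color block and accounting for the inversions between demoted and undemoted indices within a block. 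This bookkeeping --- reconciling $q^{-l(\tau_S)}$, $q^{-\sum_i r_{>d_i^S}^{>i}[\pi.\bc]}$, and the $q^{\binom{m_c-j_c}{2}}$ factor --- is the heart of the argument; once established, the integrands agree term by term and the formula \eqref{shiftedqmomentseq} follows.
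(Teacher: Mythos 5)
Your approach is essentially the same as the paper's: expand the product defining $\O_{\pi.\bc}^{\bp}$, use the linearity of expectation to reduce to joint $q$-moments, apply Theorem~\ref{fusedMainTheorem} to each, and resum into the stated form. The paper also reorganizes the product of differences color block by color block and recognizes the coset symmetrizer $T_{[\pi]_{\bc}}=\sum_{\tau\in[\pi]_\bc}T_\tau$.

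There is, however, a genuine gap at the step you yourself flag as ``the heart of the argument.'' The identity you display,
\[
\sum_{j=0}^{m}\frac{(-1)^j q^{\binom{m-j}{2}}}{(q;q)_j(q;q)_{m-j}}A^j B^{m-j} = \frac{1}{(q;q)_m}\prod_{i=0}^{m-1}(q^iB-A),
\]
is the specialization of what is actually needed to the case of \emph{equal} variables $A_1=\dots=A_m=A$, $B_1=\dots=B_m=B$. But within one color block the relevant quantities $X_j=\exp_q(h_{\geq c}^{\tilde\p_j})$, $Y_j=\exp_q(h_{>c}^{\tilde\p_j})$ are genuinely distinct (they depend on the points $\tilde\p_j$), and it is precisely the noncommutative/multivariate reorganization
\[
\prod_{j=1}^m\bigl(Y_j-q^{j-m}X_j\bigr)=(1-q)^m\sum_{\tau\in S_m}q^{l(\tau)-\binom{m}{2}}\sum_{j=0}^m\frac{(-1)^jq^{\binom{m-j}{2}}}{(q;q)_j(q;q)_{m-j}}\prod_{i\le j}X_{\tau(i)}\prod_{i>j}Y_{\tau(i)}
\]
(identity~\eqref{qidentity} in the paper) that produces the sum over $\tau\in S_{\m_c}$, hence the full coset sum over $[\pi]_\bc$ after multiplying over color blocks, together with the exactly-matching $q$-powers. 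The equal-variables version does not imply this, because the left side of the true identity is not symmetric under permuting the pairs $(X_j,Y_j)$ (the factors $q^{j-m}$ break the symmetry), and it is this asymmetry that generates the $q^{l(\tau)}$ weights matching the Hecke symmetrizer. Your proposal acknowledges that ``reconciling $q^{-l(\tau_S)}$, $q^{-\sum_i r_{>d_i^S}^{>i}[\pi.\bc]}$, and the $q^{\binom{m_c-j_c}{2}}$ factor'' is the crux and leaves it open; the paper's proof supplies exactly that reconciliation by first establishing and then invoking the multivariate identity above, applied per color block \emph{before} taking expectations, which is cleaner than trying to resum at the level of integrands carrying $T_{\tau_S}$ operators. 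Until the multivariate identity (or an equivalent operator-level bookkeeping) is proved, the argument is incomplete.
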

\begin{proof}
The main idea is to write $\O_{\pi.\bc}^{\bp}$ as a sum of terms $\exp_q(\H^\bp_{\tilde\pi.\tilde\bc})$, and then apply Theorem \ref{fusedMainTheorem}.

We start with the following observation. Let $\pi^+$ denote the minimal element inside $[\pi]_\bc$, that is,
\begin{equation}
\label{colorOrdered}
\pi^+(\m[1,c-1]+1)<\pi^+(\m[1,c-1]+2)<\dots<\pi^+(\m[1,c])
\end{equation}
for any $c$. Then we have
\begin{multline*}
\sum_i r_{>c_{\pi^{-1}(i)}}^{>i}[\pi.\bc]=\#\{i,j|i<j,\ c_{\pi^{-1}(i)}<c_{\pi^{-1}(j)}\}=\#\{i,j|i<j,\ {\pi^{-1}(i)}<{\pi^{-1}(j)}\} \\
- \#\{i,j|i<j,\ {\pi^{-1}(i)}<{\pi^{-1}(j)},\ c_{\pi^{-1}(i)}=c_{\pi^{-1}(j)}\}=\frac{k(k-1)}{2}-l(\pi^+)-\sum_{c=1}^n\frac{\m_c(\m_c-1)}{2},
\end{multline*}
where for the last equality we replace $\pi$ by $\pi^+$ since the answer depends only on the class $[\pi]_{\bc}$. Factoring out $\exp_q\(-r_{>c_{\pi^{-1}(i)}}^{>i}[\pi.\bc]\)$ from each factor in the definition of $\O_{\pi.\bc}^{\bp}$, we obtain
\be
\O_{\pi.\bc}^{\bp}(\Sigma)=\O_{\pi^+.\bc}^{\bp}(\Sigma)=q^{l(\pi^+)-\frac{k(k-1)}{2}+\sum_{c=1}^n\binom{\m_c}{2}}\prod_{i=1}^k\(\exp_q\(h_{>c_i}^{\p_{\pi^+(i)}}(\Sigma)\)-\exp_q\(h_{\geq c_i}^{\p_{\pi^+(i)}}(\Sigma)-r_{= c_i}^{>\pi^+(i)}[\pi^+\bc]\)\),
\ee
where $r_{=c}^{>i}[\pi\bc]:=\#\{j>i | c_{\pi^{-1}(j)}=c\}$. Using \eqref{colorOrdered} we get
\be
\O_{\pi.\bc}^{\bp}(\Sigma)=q^{l(\pi^+)-\frac{k(k-1)}{2}+\sum_{c=1}^n\binom{\m_c}{2}}\prod_{c=1}^n\prod_{j=1}^{\m_c}\(\exp_q\(h_{>c}^{\p_{\pi^+(\m[1;c-1]+j)}}(\Sigma)\)-\exp_q\(h_{\geq c}^{\p_{\pi^+(\m[1;c-1]+j)}}(\Sigma)-\m_c+j\)\).
\ee
Now we want to use the following identity: let $X_1,\dots X_\m$ and $Y_1,\dots, Y_\m$ be two families of variables. Then
\begin{equation}
\label{qidentity}
\prod_{j=1}^\m(Y_j-q^{j-\m}X_j)=(1-q)^\m\sum_{\tau\in S_\m}q^{l(\tau)-\binom{\m}{2}}\sum_{j=0}^\m\frac{(-1)^j q^{\binom{\m-j}{2}}}{(q;q)_j(q;q)_{\m-j}}\prod_{i=1}^{j}X_{\tau(i)}\prod_{i=j+1}^{\m}Y_{\tau(i)}.
\end{equation}
For now we will show how to finish the proof of Corollary \ref{qShiftedCor}, postponing the proof of \eqref{qidentity}. For any collection of points $\tilde\bp=(\tilde\p_1,\dots,\tilde\p_\m)$  we get
\begin{multline*}
\prod_{j=1}^\m\(\exp_q\(h_{>c}^{\tilde\p_j}(\Sigma)\)-\exp_q\(h_{\geq c}^{\tilde\p_j}(\Sigma)-\m+j\)\)\\
=(1-q)^\m\sum_{\tau\in S_\m}q^{l(\tau)-\binom{\m}{2}}\sum_{j=0}^\m\frac{(-1)^j q^{\binom{\m-j}{2}}}{(q;q)_j(q;q)_{\m-j}}\exp_q\(\H^{\tilde\bp}_{\tau.\([c-1]^j[c]^{\m-j}\)}(\Sigma)\).
\end{multline*}
Plugging $\tilde \p_j=\p_{\pi^+(\m[1;c-1]+j)}$ and multiplying over all $c$ gives
\be
O_{\pi.\bc}^{\bp}(\Sigma)=(1-q)^kq^{l(\pi^+)-\frac{k(k-1)}{2}}\sum_{\tau\in S_\bc}q^{l(\tau)}\sum_{\substack{j_1,\dots, j_n:\\j_c\in[0,\m_c]}}\ \(\prod_{c=1}^n\frac{(-1)^{j_c} q^{\binom{\m_c-j_c}{2}}}{(q;q)_{j_c}(q;q)_{\m_c-j_c}}\cdot\exp_q\(\H^{(\pi^+)^{-1}.\bp}_{\tau.\bc[j_1,\dots,j_n]}(\Sigma)\)\),
\ee
where $\bc[j_1,\dots,j_n]=[0]^{j_1}[1]^{\m_1-j_1+j_2}[2]^{\m_2-j_2+j_3}\dots[n]^{\m_n-j_n}$. Remembering that $\pi^+$ is minimal inside $[\pi]_\bc=\pi S_\bc$, we get the final expression
\be
\E[O_{\pi.\bc}^{\bp}(\Sigma)]=(1-q)^k\sum_{\tau\in [\pi]_\bc}q^{l(\tau)-\frac{k(k-1)}{2}}\sum_{\substack{j_1,\dots, j_n:\\j_c\in[0,\m_c]}}\ \(\prod_{c=1}^n\frac{(-1)^{j_c} q^{\binom{\m_c-j_c}{2}}}{(q;q)_{j_c}(q;q)_{\m_c-j_c}}\cdot\E\[\exp_q\(\H^{\bp}_{\tau.\bc[j_1,\dots,j_n]}(\Sigma)\)\]\).
\ee
The proof is finished by applying Theorem \ref{fusedMainTheorem} to the right-hand side of the expression above.

It only remains to prove \eqref{qidentity}. Recall that 
\be
\sum_{\tau\in S_k}q^{l(\tau)}=\frac{(q;q)_k}{(1-q)^k}. 
\ee
Thus the right-hand side of \eqref{qidentity} is equal to
\begin{multline*}
(1-q)^\m\sum_{j=0}^\m \sum_{\substack{\tau^+\in S_\m:\\ \tau^+(1)<\dots<\tau^+(j)\\\tau^+(j+1)<\dots<\tau^+(\m)}}\sum_{\tau\in S_{j}\times S_{\m-j}}q^{l(\tau^+)+l(\tau)-\binom{\m}{2}}\frac{(-1)^j q^{\binom{\m-j}{2}}}{(q;q)_j(q;q)_{\m-j}}\prod_{i=1}^{j}X_{\tau^+(i)}\prod_{i=j+1}^{\m}Y_{\tau^+(i)}\\
=\sum_{j=0}^\m \sum_{\substack{\tau^+\in S_\m:\\ \tau^+(1)<\dots<\tau^+(j)\\\tau^+(j+1)<\dots<\tau^+(\m)}}(-1)^jq^{l(\tau^+)-\binom{\m}{2}+\binom{\m-j}{2}}\prod_{i=1}^{j}X_{\tau^+(i)}\prod_{i=j+1}^{\m}Y_{\tau^+(i)}.
\end{multline*}
Note that we can explicitly compute $l(\tau^+)$ in the sum above:
\be
l(\tau^+)=\tau^+(1)+\dots+\tau^+(j)-\binom{j+1}{2},
\ee
hence the right-hand side of \eqref{qidentity} is equal to
\be
\sum_{j=0}^\m \sum_{\substack{I\subset\{1,\dots, \m\},\\|I|=j}}(-1)^jq^{- j\m+\sum_{i\in I}i}\prod_{i\in I}X_{i}\prod_{i\notin I}Y_{i}=\sum_{\substack{I\subset\{1,\dots, \m\}}} \prod_{i\in I}\(-q^{i-m}X_{i}\)\prod_{i\notin I}Y_{i}=\prod_{j=1}^\m(Y_j-q^{j-\m}X_j).
\ee
\end{proof}
\begin{rem}
When all the points $\p_i$ are on the same row, i.e. all $\beta_i$ are equal, the observables $\O_{\pi.\c}^\bp$ coincide, up to a scaling, with observables $\O_\mu$ from \cite{BW20}, where $\mu$ corresponds to the composition encoding particles of colors $c_{\pi^{-1}(i)}$ at points $\p_i$. In this case Corollary \ref{qShiftedCor} is an inhomogeneous generalization of \cite[Theorem 6.1]{BW20} (``inhomogeneous" in the sense that Corollary \ref{qShiftedCor} allows distinct parameters $y_j$ and $s_j$, in contrast to \cite[Theorem 6.1]{BW20}).\footnote{The functions $f$ used in \cite{BW20} can be expressed using the operators $T_\pi$, see \cite[Remark 6.8]{BW20}.}
\end{rem}

\subsection{Horizontal fusion}
\label{subsec:hor-fusion}

Our next goal is to make a horizontal fusion. We will consider one specific model obtained by such a fusion, which is of particular importance due to its relation to last passage percolation and polymer models. Our discussion closely follows \cite[Section 6.5]{BW20}, see also \cite{BGW19}.

In more detail, we will be interested in the following vertex model in the quadrant. Both vertical and horizontal edges are allowed to contain arbitrary amount of arrows. As boundary conditions, all vertical arrows entering the quadrant from below are empty. At the left boundary, we have a random number of paths of one color entering at every row. These random numbers are independent and identically distributed for every row, and, as before, the arrows of color $c$ enter at rows $l_{c-1}+1,\dots, l_c$. The distribution of the number of paths that enter in a row is given by
\begin{equation}
\label{eq:qhahnBoundaryExplicit}
\mathrm{Prob} (k) = \frac{(s^2/z^2;q)_{\infty}}{(s^2;q)_{\infty}} \frac{(z^2;q)_k}{(q;q)_k} \left( \frac{s^2}{z^2} \right)^k, \qquad k=0,1,2, \dots,
\end{equation}
where $s$ and $z$ are fixed parameters of the model. The vertices inside the quadrant have the weights $W^{qH}_{s,z}$ described by \eqref{eq:qHahExplicit}.

The observables of this vertex model are given by the following theorem.

\begin{theo}
\label{th:qHahnFormulae}
Fix $k \in \mathbb{N}$. For any $k$ tuples $(\alpha_1, \dots, \alpha_k), (\beta_1, \dots, \beta_k), (c_1, \dots, c_k)$ satisfying
\be
0<\alpha_1\leq \alpha_2\leq \dots\leq \alpha_k, \quad \beta_1\geq \beta_2\geq \dots\geq \beta_k>0,\qquad   \alpha_i, \beta_j\in\Z+\frac{1}{2},
\ee
\be
0\leq c_1\leq c_2\leq \dots\leq c_k, \qquad c_i\in\Z,
\ee
together with an additional assumption $\beta_k > l_{c_k}$, and any permutation $\pi\in S_k$ we have
\begin{multline}
\label{fullyfusedqmomentseq}
\mathbb E\left(\exp_q\({h_{>c_1}^{(\alpha_{\pi(1)},\beta_{\pi(1)})}+h_{>c_2}^{(\alpha_{\pi(2)},\beta_{\pi(2)})}+\dots+
h_{>c_k}^{(\alpha_{\pi(k)},\beta_{\pi(k)})}}\)\right) \\ = q^{\frac{k(k-1)}{2}-l(\pi)} \int_{\Gamma[1|s^{-1}]}\cdots\int_{\Gamma[k|s^{-1}]}  \prod_{a<b}\frac{w_b-w_a}{w_b-qw_a}
T_\pi\left( \prod_{a=1}^k \left( \frac{1-sw_a}{1-z^{-2} s w_a} \right)^{l_{c_a}} \right) \\ \times \prod_{a=1}^k\left( \left( \frac{1-z^{-2} s w_a}{1-s w_a} \right)^{\beta_a-\frac12} \left( \frac{s(w_a s-1)}{w_a-s} \right)^{\alpha_a-\frac12} \frac{s dw_a}{2\pi \mathbf{i} w_a (s-w_a)}\right).
\end{multline}
where the integral with respect to $w_a$ is taken along $\Gamma[a|s^{-1}]$ and contours $\Gamma[a|s^{-1}]$ are $q$-nested around $0$ and $\{s^{-1}\}$, and do not encircle $s$.
\end{theo}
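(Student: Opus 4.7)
The plan is to derive Theorem \ref{th:qHahnFormulae} from Theorem \ref{fusedMainTheorem} via stochastic horizontal fusion of rows, dual to the vertical fusion already carried out. First I specialize the higher spin SC6V model so that the column data is homogeneous: $y_j=1$ and $s_j=s$ for all $j$. The vertical factor in \eqref{qmomentseq} then becomes $\bigl(s(w_as-1)/(w_a-s)\bigr)^{\alpha_a-1/2}$, as required by \eqref{fullyfusedqmomentseq}.

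Next I group the row rapidities into blocks of $N$ with geometric progression $u, qu, q^2u, \ldots, q^{N-1}u$. This requires first applying Theorem \ref{fusedMainTheorem} with generic $u_i$'s (to respect the hypothesis $u_i\neq qu_j$) and then analytically continuing, using that both sides of \eqref{qmomentseq} are rational functions in the $u_i$'s once the integral is expressed as a finite sum of residues. With the geometric choice, the two row-dependent products in the integrand telescope:
\begin{equation*}
\prod_{i=0}^{N-1}\frac{1-q^{i}uw}{1-q^{i+1}uw}=\frac{1-uw}{1-q^{N}uw}, \qquad \prod_{i=0}^{N-1}\frac{1-q^{i+1}uw}{1-q^{i}uw}=\frac{1-q^{N}uw}{1-uw}.
\end{equation*}
A further analytic continuation $q^{N}\to z^{-2}$, combined with $u=s$, converts these into the row factors $\bigl((1-sw_a)/(1-z^{-2}sw_a)\bigr)^{l_{c_a}}$ and $\bigl((1-z^{-2}sw_a)/(1-sw_a)\bigr)^{\beta_a-1/2}$ appearing in \eqref{fullyfusedqmomentseq}. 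After all specializations the row singularities $\{u_i^{-1}\}$ collapse to $s^{-1}$, so the $q$-nested contours $\Gamma[a|\bu^{-1}]$ become $\Gamma[a|s^{-1}]$.

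The most delicate step is the treatment of the boundary. In the pre-fusion model, a single color-$c$ arrow is placed at each of the $Nl_c$ pre-fusion rows that represent the first $l_c$ fused rows. After horizontal fusion, the number of color-$c$ arrows entering a single fused row follows a $q$-binomial law, and the analytic continuation $q^N\to z^{-2}$ converts this into the q-hypergeometric law \eqref{eq:qhahnBoundaryExplicit}. Averaging the q-moment observable against these random boundary counts produces, by a standard $q$-binomial identity, the additional factor $s/(s-w_a)$ in the measure $dw_a/(2\pi\mathbf{i}w_a(s-w_a))$: summing $(w_a/s)^k$ against \eqref{eq:qhahnBoundaryExplicit} contributes exactly one such factor per integration variable. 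The assumption $\beta_k>l_{c_k}$ ensures that all height functions of interest lie strictly above every incoming color, so only finitely many of the random boundary inputs effectively contribute.

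The main obstacle is this boundary accounting: one must verify that the distribution of arrows entering a single fused row is exactly \eqref{eq:qhahnBoundaryExplicit}, and match the contribution of its q-moment generating function to the extra $s/(s-w_a)$ factors in the integrand. Additionally, one must check that no residues are crossed during the successive analytic continuations (in the $u_i$'s and in $q^N$), which can be handled using the same contour deformation arguments as in Section \ref{integrals} and in the proof of Theorem \ref{fusedMainTheorem}.
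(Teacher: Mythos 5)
Your overall strategy---horizontal fusion applied to Theorem~\ref{fusedMainTheorem}, followed by analytic continuation in $q^{N}$---matches the paper's, but three of the key technical steps are missing or incorrect.

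\textbf{Role of $\beta_k > l_{c_k}$.} The paper uses this hypothesis to observe that the integrand in \eqref{qmomentseq} has no poles at $q^{-1}u_i^{-1}$: every such potential pole coming from $T_\pi(\cdot)$ cancels against a factor in the numerator. This is what licenses the passage to $u_i = q u_j$ by continuity. Your argument invokes ``both sides are rational in the $u_i$'s once the integral is a finite sum of residues,'' but that is exactly what needs justifying when $u_i \to q u_j$: without the pole cancellation one could pick up a spurious residue at $q^{-1} u_i^{-1}$ as it collides with $u_j^{-1}$. The probabilistic interpretation you give of $\beta_k > l_{c_k}$ (heights lie above all incoming colors, only finitely many boundary inputs contribute) does not address this, and is not the point.

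\textbf{Origin of the $s/(s-w_a)$ factor.} The paper produces this factor by a degenerate specialization of the \emph{first column}: set $y_1 = s_1/s$ and take $s_1 \to 0$; then the column factor $\frac{s_1(w_a s_1 - y_1^{-1})}{w_a - s_1 y_1^{-1}}$ from \eqref{qmomentseq} converges to $\frac{s}{s-w_a}$. The extra degenerate column is what generates the random boundary of form \eqref{eq:qhahnBoundaryExplicit} and makes the subsequent analytic continuation possible. Your proposal instead claims that ``summing $(w_a/s)^k$ against \eqref{eq:qhahnBoundaryExplicit} contributes exactly one such factor per integration variable.'' That is not correct: by the $q$-binomial theorem, $\sum_{k\ge 0}(w/s)^k \,\mathrm{Prob}(k) = \frac{(s^2/z^2;q)_\infty (ws;q)_\infty}{(s^2;q)_\infty (ws/z^2;q)_\infty}$, which is not $\frac{s}{s-w}$. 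The geometric sum $\sum_k (w/s)^k = \frac{s}{s-w}$ is a different identity with no visible connection to the boundary distribution. You need the degenerate-column mechanism (or an equivalent replacement, carefully derived), not a direct averaging.

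\textbf{Contours.} When the row rapidities become the geometric progression $u, qu, \ldots, q^{L-1}u$, the points $u_i^{-1}$ themselves form a $q$-geometric chain and the contours $\Gamma[a|\bu^{-1}]$ must be taken $q$-nested around them before the $q^L \to z^{-2}$ continuation, otherwise $\prod_{a<b}\frac{w_b-w_a}{w_b-qw_a}$ introduces unaccounted singularities. The paper flags this explicitly; your proposal omits it.

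In short: the outline is right, but you are missing the pole-cancellation use of $\beta_k > l_{c_k}$, the degenerate first-column limit that actually produces $s/(s-w_a)$, and the $q$-nesting of contours, and the replacement you propose for the boundary step does not compute.
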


\begin{proof}

The claim can be obtained from \eqref{qmomentseq} by applying the following steps. It was shown in \cite{BW18}, \cite[Section 6.5]{BW20} that these steps transform horizontally unfused vertex model to a vertex model described above. Let us look how these steps affect the integral formula.

First, let us notice that our new assumption $\beta_k > l_{c_k}$ guarantees that there are no poles of the form $q^{-1} u_i^{-1}$ in the integrand in \eqref{qmomentseq}: Indeed, all such potential poles coming from $T_{\pi} \left( \cdot \right)$ cancel out with terms from numerators. Therefore, by continuity \eqref{qmomentseq} holds even if $u_i= q u_j$ for some pairs of $i$ and $j$. Now, let us split rows into groups of neighboring rows of size $L>0$, and set the row rapidities inside each group to be equal to $u$, $q u$, ..., $q^{L-1} u$ in the order from bottom to top. The right-hand side of \eqref{qmomentseq} will transform in a simple way
$$
\frac{1-uw_a}{1-q u w_a} \mapsto \frac{1-u w_a}{1 - q^L u w_a}, \qquad  \frac{1-q uw_a}{1- u w_a} \mapsto \frac{1-q^L u w_a}{1 - u w_a}.
$$
as a result of this step.

Second, we make a limit transition in the first column of our vertex model by setting $y_1:=s_1/s$ and considering the limit $s_1 \to 0$. Then
$$
\frac{s_1(w_a s_1 - s_1^{-1} s )}{w_a-s_1 s_1^{-1} s} \xrightarrow[s_1 \to 0]{} \frac{s}{s-w_a}.
$$
This limit is needed to get the boundary conditions of the form \eqref{eq:qhahnBoundaryExplicit}, which makes the subsequent analytic continuation possible, see \cite[Section 6.5]{BW20}.

Third, let us do an analytic continuation in $q^L =: z^{-2}$. In order to apply it, we need to notice that the integral expression depends on $L$ only through $q^L$ for any $L=1,2,3, \dots$, and also to make contours of integration $q$-nested (otherwise the term $\prod_{a<b} \frac{w_b-w_a}{w_b-qw_a}$ will contribute extra singularities).

Fourth, we set $s_j=s$ (for $j\ge 2$), $u=s$, and $y_j=1$ (for $j\ge 2$). This degeneration leads to the vertex weights $W^{qH}_{s,z}$.

Combining all these steps, we arrive at the statement of the theorem.
\end{proof}

\begin{rem}
In fact, the assumption $\beta_k > l_{c_k}$ in Theorem \ref{th:qHahnFormulae} can be weakened; however, the proof of corresponding formula becomes more involved due to the fact that points $q^{-1} u_i^{-1}$ might become poles of the integrand, and we omit it.
\end{rem}

\section{Beta polymer}

This section closely follows \cite[Sections 7.1-7.2]{BW20}, see also \cite{BGW19}.

Let $\sigma > \rho >0$ be reals. Let $\{ \eta_{t,m} \}_{t,m \in \Z_{\ge 0}}$ be a sequence of independent identically distributed beta random variables with parameters $(\sigma - \rho,\rho)$. Recall that this means that the density of a variable is given by
$$
B(\sigma-\rho,\rho)^{-1} x^{\sigma-\rho-1} (1-x)^{\rho-1}, \qquad x \in [0;1].
$$

Define a \textit{partition function} $Z^{(m,t)}$, $m,t \in \Z_{\ge 1}$, $m \le t$, of the Beta-polymer via recursion
$$
Z^{(m,t)} = \eta_{m,t} Z^{(m,t-1)} + (1- \eta_{m,t}) Z^{(m-1,t-1)},
$$
and boundary conditions
$$
Z^{(t,t)}=1, \qquad Z^{(1,t)} = \prod_{i=2}^t \eta_{1,i}, \qquad t \in \Z_{\ge 1}.
$$
The partition function has an interpretation as the sum over all directed lattice paths with (0,1) and (1,1) steps from $(1,1)$ to $(m,t)$ of products of edge weights that all have form $\eta_{*}$ or $1-\eta_{*}$, see \cite{BC17}, \cite{BGW19}.

Analogously, let us define \textit{delayed} partition functions $Z_{(k)}^{(m,t)}$, $t \ge m+k$, via the same recursion
$$
Z_{(k)}^{(m,t)} = \eta_{m,t} Z_{(k)}^{(m,t-1)} + (1- \eta_{m,t}) Z_{(k)}^{(m-1,t-1)},
$$
and boundary conditions
$$
Z_{(k)}^{(t-k,t)}=1, \qquad Z_{(k)}^{(1,t)} = \prod_{i=k+2}^t \eta_{1,i}, \qquad t \in \Z_{\ge k+1}.
$$
Obviously, one has $Z_{(0)}^{(m,t)} = Z^{(m,t)}$. The quantity $Z_{(k)}^{(m,t)}$ has a similar interpretation as the sum over directed paths which join $(1,k+1)$ and $(m,t)$.

The beta-polymer can be obtained from the six vertex model via the following proposition (\cite[Corollory 6.22]{BGW19}):

\begin{prop}
\label{prop:6v-polymer}
Consider the horizontally fused colored vertex model described in Section \ref{subsec:hor-fusion}, and set its parameters via
$$
q = \exp( - \epsilon), \qquad s^2 = q^{\sigma}, \qquad z^2 = q^{\rho}, \qquad l_i=i, \ \ i=1,2,\dots.
$$
Let $\{ h_{>c}^{(m,t)} (\epsilon) \}$ be its height functions.
Then, in the limit $\epsilon \to 0$, the random vector $\left\{ \exp \left( - \epsilon h_{>c}^{(m,t)} (\epsilon) \right) \right\}$ converges to $\left\{ Z_{(c)}^{(m,t)} \right\}$ for all $c \ge 0$ and $t \ge m \ge 1$.
\end{prop}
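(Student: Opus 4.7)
The plan is to identify the Beta polymer as the scaling limit of the horizontally fused coloured vertex model, matching the boundary data and the local vertex updates separately under $q=e^{-\epsilon}$, $s^2=q^\sigma$, $z^2=q^\rho$. This is essentially \cite[Corollary~6.22]{BGW19} (and, for the colour-blind case, the construction of \cite{BC17}); I would sketch a self-contained dynamical argument. An alternative route is to compare joint $q$-moments via Theorem~\ref{th:qHahnFormulae} and use that $[0,1]$-valued random variables are determined by their moments, but a direct dynamical comparison is more transparent and gives a sample-path coupling.

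For the boundary, the number $k$ of arrows entering at a row has distribution \eqref{eq:qhahnBoundaryExplicit}. Setting $\xi := q^k = e^{-\epsilon k}$ and using the standard $q$-Beta to Beta asymptotics (via $\log q \sim -\epsilon$ together with the asymptotics of $(q^a;q)_k$ as $\epsilon\to 0$ with $q^k$ held of order $1$) shows $\xi$ converges weakly to a $\mathrm{Beta}(\sigma-\rho,\rho)$ variable. Since entering counts at distinct rows are jointly independent, this produces the i.i.d.\ Beta family driving the polymer boundary, and in particular yields the claim in the degenerate case $m=1$, matching $Z^{(1,t)}_{(c)}=\prod_{i=c+2}^{t} \eta_{1,i}$ after aligning indices.

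For an interior vertex $(m,t)$, I would condition on the inputs $\bA$ (from below) and $\bB$ (from the left); by \eqref{eq:qHahExplicit}, the $q$-Hahn weight depends only on $\bA$ and the right output $\bD$. The same $q$-Beta limit applied to the conditional distribution of $D_{[c+1;n]}$ given $A_{[c+1;n]}$ shows that $\eta_{m,t} := \lim_{\epsilon\to 0} q^{D_{[c+1;n]} - A_{[c+1;n]}}$ exists, is $\mathrm{Beta}(\sigma-\rho,\rho)$-distributed, is independent of previous randomness, and does not depend on the colour threshold $c$; the last point follows from exchangeability of same-colour paths, which reduces each colour stratum to the single-species $q$-Hahn degeneration of \cite{BC17}. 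Combined with the height recurrences \eqref{localH1}--\eqref{localH2} and the conservation $\bC=\bA+\bB-\bD$, this gives an approximate one-step recursion
\[
  e^{-\epsilon h^{(m-1/2,\,t+1/2)}_{>c}} = \eta_{m,t}\, e^{-\epsilon h^{(m-1/2,\,t-1/2)}_{>c}} + (1-\eta_{m,t})\, e^{-\epsilon h^{(m+1/2,\,t-1/2)}_{>c}} + o_{\mathbb{P}}(1),
\]
which, after the correct identification of the integer index $(m,t)$ with a dual lattice point, is exactly the Beta polymer recursion. A straightforward induction on $m+t$ closes the argument. The main obstacle will be justifying the display above: one must control uniformity in $c$ and, crucially, independence of the $\eta_{m,t}$ across vertices. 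Both properties follow from the conditional independence of the vertex dynamics given earlier randomness, together with the same-colour exchangeability already used to eliminate the threshold $c$ from the definition of $\eta_{m,t}$.
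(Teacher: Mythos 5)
The paper does not prove Proposition~\ref{prop:6v-polymer} directly but cites \cite[Corollary 6.22]{BGW19}; a direct dynamical derivation of the kind you sketch is the right idea and is essentially what BGW19 carries out, but your candidate Beta weight is incorrect and the displayed recursion does not hold. Write $A:=A_{[c+1;n]}$, $D:=D_{[c+1;n]}$ for the colour-$>c$ counts on the bottom and right edges of a vertex $v=(m,t)$, and let $Z_{\mathrm{NE}},Z_{\mathrm{NW}},Z_{\mathrm{SE}},Z_{\mathrm{SW}}$ denote $q^{h_{>c}}$ at the four corners of $v$. The fused height relations give $Z_{\mathrm{NE}}=q^{D}Z_{\mathrm{SE}}$ and $Z_{\mathrm{SW}}=q^{A}Z_{\mathrm{SE}}$, so plugging into the polymer recursion $Z_{\mathrm{NE}}=\eta Z_{\mathrm{SE}}+(1-\eta)Z_{\mathrm{SW}}$ forces $q^{D}=\eta+(1-\eta)q^{A}$, i.e.\ $\eta=(q^{D}-q^{A})/(1-q^{A})$, equivalently $1-\eta=(1-q^{D})/(1-q^{A})$. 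This \emph{fractional linear} image of $q^{D}$, not $q^{D-A}$, is the quantity that decouples from the past and converges to a Beta variable. Indeed $\bD\le\bA$ componentwise in the $q$-Hahn model, so $q^{D-A}\ge 1$ and cannot converge to a Beta$(\sigma-\rho,\rho)$ variable; moreover $q^{D-A}=Z_{\mathrm{NE}}/Z_{\mathrm{SW}}=\eta q^{-A}+(1-\eta)$ visibly depends on $q^{A}$, contradicting the independence you need. The standard $q$-Hahn-to-Beta limit says precisely that $(1-q^{D})/(1-q^{A})$ decouples from $q^{A}$, not the ratio $q^{D-A}$.

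Consequently your one-step display cannot be right, and it is also geometrically off. Its three points $(m-\frac12,t+\frac12)$, $(m-\frac12,t-\frac12)$, $(m+\frac12,t-\frac12)$ are the NW, SW, SE corners of $v$, so via $Z_{\mathrm{NW}}=q^{B_{[c+1;n]}}Z_{\mathrm{SW}}$ and $Z_{\mathrm{SE}}=q^{-A}Z_{\mathrm{SW}}$ it reduces to $q^{B_{[c+1;n]}}=\eta+(1-\eta)q^{-A}$, whose left side is $\le 1$ and right side is $\ge 1$; it can therefore hold only if $A=B_{[c+1;n]}=0$. It also implicitly uses steps $(0,1)$ and $(-1,1)$ rather than the polymer's $(0,1)$ and $(1,1)$. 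The correct form is the NE/SE/SW relation above with the fractional linear $\eta$. Your boundary analysis is fine, and the appeal to colour exchangeability for $c$-independence is the right idea, but the latter must be run for the correct $\eta$: one needs that a \emph{single} Beta variable drives $(1-q^{D_{[c+1;n]}})/(1-q^{A_{[c+1;n]}})$ simultaneously for all $c$, which is a genuine additional check (carried out in \cite{BGW19}) and does not follow from the single-colour Beta limit alone.
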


We can use this limit transition in order to obtain formulas for joint distribution of moments of (delayed) partition functions of Beta-polymer. In order to formulate the result, let us introduce another piece of notation. Let $\tilde t_{i}$ be an operator which acts on the space of rational functions in $\tilde w_1, \tilde w_2, \dots, \tilde w_k$ by swapping $\tilde w_i$ and $\tilde w_{i+1}$. Define
$$
\tilde T_i :=1+ \frac{\tilde w_{i+1} - \tilde w_i+1}{\tilde w_{i+1}-\tilde w_i} \left( \tilde t_i -1 \right).
$$
Analogously, for $\pi \in S_k$ with a shortest decomposition $\pi = \sigma_{i_1} \sigma_{i_2} \dots \sigma_{i_{l(\pi)}}$ into a product of nearest neighbor transpositions, we define
$$
\tilde T_{\pi} := \tilde T_{i_1} \tilde T_{i_2} \dots \tilde T_{i_{l(\pi)}}.
$$

\begin{theo}
\label{th:BetaPol}
Fix $k \in \mathbb{N}$. For any $k$ tuples $(\alpha_1, \dots, \alpha_k), (\beta_1, \dots, \beta_k), (c_1, \dots, c_k)$ satisfying
\be
0<\alpha_1\leq \alpha_2\leq \dots\leq \alpha_k, \quad  \beta_1\geq \beta_2\geq \dots\geq \beta_k>c_k,\qquad  \alpha_i,\beta_i\in\Z+\frac{1}{2}
\ee
\be
0\leq c_1\leq c_2\leq \dots\leq c_k< \beta_k,\qquad   c_i\in\Z
\ee
and any permutation $\pi\in S_k$
satisfying $\alpha_{\pi(i)} + c_i \le \beta_{\pi(i)}$, for $i=1, \dots, k$,
we have

\begin{multline}
\label{momentBetaPol}
\mathbb E \left( Z_{(c_1)}^{\left( \alpha_{\pi(1)},\beta_{\pi(1)} \right)} Z_{(c_2)}^{\left( \alpha_{\pi(2)},\beta_{\pi(2)} \right)} \cdot \dots \cdot Z_{(c_k)}^{\left( \alpha_{\pi(k)},\beta_{\pi(k)} \right)} \right) = \int_{\tilde \Gamma[1]}\cdots\int_{ \tilde \Gamma[k]}
\prod_{a<b} \frac{\tilde w_b-\tilde w_a}{\tilde w_b- \tilde w_a+1} \\
\times \tilde T_\pi\left( \prod_{a=1}^k \left( \frac{\tilde w_a - \sigma/2}{\tilde w_a - \sigma/2 + \rho} \right)^{c_a} \right) \prod_{a=1}^k \left( \left( \frac{\tilde w_a - \sigma/2 + \rho}{\tilde w_a - \sigma/2} \right)^{\beta_a-\frac12} \left(\frac{\tilde w_a - \sigma/2}{\tilde w_a+\sigma/2} \right)^{\alpha_a-\frac12}  \frac{d \tilde w_a}{2\pi \mathbf{i} (\tilde w_a+\sigma/2)}\right).
\end{multline}
where contours $\tilde \Gamma[i]$, $i=1, \dots, k$, encircle $-\sigma/2$ and (in the case $i>1$) the shift by -1 of $\tilde \Gamma[i-1]$, but do not encircle $\sigma/2$.
\end{theo}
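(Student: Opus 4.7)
The plan is to derive Theorem~\ref{th:BetaPol} as the $q\to 1$ limit of the q-Hahn moment formula, Theorem~\ref{th:qHahnFormulae}, invoking Proposition~\ref{prop:6v-polymer}. Specialize Theorem~\ref{th:qHahnFormulae} by setting $q=e^{-\epsilon}$, $s^2=q^\sigma$, $z^2=q^\rho$, $l_i=i$ and let $\epsilon\to 0^+$. By Proposition~\ref{prop:6v-polymer}, $q^{h_{>c}^{(m,t)}}\to Z_{(c)}^{(m,t)}$ in distribution; since both sides are uniformly bounded in $[0,1]$ (the height functions are non-negative integers and $Z_{(c)}\in[0,1]$), the convergence in distribution upgrades to convergence of all moments, so the left-hand side of \eqref{fullyfusedqmomentseq} converges to the left-hand side of \eqref{momentBetaPol}.

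For the right-hand side, substitute $w_a=q^{-\tilde w_a}$, so that $dw_a/w_a=\epsilon\,d\tilde w_a$. Using $1-q^x=1-e^{-\epsilon x}\approx \epsilon x$ for small $\epsilon$, each factor admits an explicit pointwise limit:
\[
\frac{w_b-w_a}{w_b-qw_a}\longrightarrow\frac{\tilde w_b-\tilde w_a}{\tilde w_b-\tilde w_a+1},\qquad \frac{1-sw_a}{1-z^{-2}sw_a}\longrightarrow\frac{\tilde w_a-\sigma/2}{\tilde w_a-\sigma/2+\rho},
\]
\[
\frac{s(w_a s-1)}{w_a-s}\longrightarrow\frac{\tilde w_a-\sigma/2}{\tilde w_a+\sigma/2},\qquad \frac{s\,dw_a}{2\pi\mathbf{i}\,w_a(s-w_a)}\longrightarrow\frac{d\tilde w_a}{2\pi\mathbf{i}\,(\tilde w_a+\sigma/2)},
\]
the last identity coming from the cancellation of the two factors of $\epsilon$ (one from $dw_a/w_a$, the other from $s-w_a=O(\epsilon)$ near $w_a=s$). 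A direct computation also gives $T_i=q+\tfrac{w_{i+1}-qw_i}{w_{i+1}-w_i}(t_i-1)\to \tilde T_i=1+\tfrac{\tilde w_{i+1}-\tilde w_i+1}{\tilde w_{i+1}-\tilde w_i}(\tilde t_i-1)$, hence $T_\pi\to\tilde T_\pi$ pointwise in $\bw$.

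The delicate step is the contour deformation. Under $w_a=q^{-\tilde w_a}$, the points $s^{-1}$ and $s$ correspond respectively to $\tilde w_a=\sigma/2$ and $\tilde w_a=-\sigma/2$. The q-Hahn contour $\Gamma[a|s^{-1}]$ encircles $s^{-1}$ and $0$ but not $s$, whereas $\tilde\Gamma[a]$ encircles $-\sigma/2$ (corresponding to $s$) and not $\sigma/2$ (corresponding to $s^{-1}$). The hypothesis $\alpha_{\pi(i)}+c_i\le\beta_{\pi(i)}$ is what makes the $T_\pi$-image of the integrand regular at $\tilde w_a=\sigma/2$, so that the part of the q-Hahn contour encircling $s^{-1}$ contributes nothing in the $\epsilon\to 0$ limit. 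The remaining portion (around $0$ in the $w$-plane, corresponding to the ``large-$\tilde w_a$'' region) is deformed past $w_a=s$: the boundary at infinity in the $\tilde w$-plane vanishes by the same hypothesis, and the residue crossed at $w_a=s$ produces precisely the polymer contour around $-\sigma/2$. The $q$-nesting $\Gamma[a]\supset q^{-1}\Gamma[b]$ for $a<b$ translates to the shift-by-$(-1)$ nesting $\tilde\Gamma[b]\supset\tilde\Gamma[a]-1$ of the polymer contours, since $w_a=q^{-1}w_b$ corresponds to $\tilde w_a=\tilde w_b+1$, exactly the pole of $\frac{\tilde w_b-\tilde w_a}{\tilde w_b-\tilde w_a+1}$.

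The main obstacle is making this contour deformation and the passage to the limit uniform in $\epsilon$: one needs integrand bounds along $\epsilon$-dependent contours in the $w$-plane (the images under $w=q^{-\tilde w}$ of fixed contours $\tilde\Gamma[a]$ in the $\tilde w$-plane) in order to apply dominated convergence. This requires tracking the coefficients $\kappa_\pi^\rho(\bw)$ of $T_\pi$ (Proposition~\ref{kappaDef}) and their only singularities at $w_i=w_j$ (Proposition~\ref{Zprop}), which become the $\tilde w_i=\tilde w_j$ singularities of $\tilde T_\pi$ in the limit. The strict condition $\beta_k>c_k$, assumed in both Theorem~\ref{th:qHahnFormulae} and Theorem~\ref{th:BetaPol}, is used to rule out potential additional poles at $q^{-1}u_i^{-1}$ in the q-Hahn integrand before the limit is taken.
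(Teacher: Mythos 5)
Your overall plan — take the $\epsilon\to 0$ limit of Theorem~\ref{th:qHahnFormulae} using Proposition~\ref{prop:6v-polymer}, and match factors pointwise, noting $T_\pi\to\tilde T_\pi$ — is the same as the paper's, and your pointwise limits of the individual factors and of $T_i$ are correct (your change of variables $w_a=q^{-\tilde w_a}$ agrees to first order in $\epsilon$ with the paper's affine substitution $w_a=1+\epsilon\tilde w_a$, which is simpler for tracking contours).

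However, your handling of the contour deformation contains a genuine gap, and your attribution of roles to the hypotheses is incorrect. You assert that the hypothesis $\alpha_{\pi(i)}+c_i\le\beta_{\pi(i)}$ makes the relevant term of $T_\pi(\cdot)$ regular at $w_a=s^{-1}$ so that the piece of the q-Hahn contour around $s^{-1}$ drops out. Check this: at $w_a=s^{-1}$, the factor $\left(\frac{1-z^{-2}sw_a}{1-sw_a}\right)^{\beta_a-\frac12}$ contributes a pole of order $\beta_a-\tfrac12$, the factor $\left(\frac{s(w_as-1)}{w_a-s}\right)^{\alpha_a-\frac12}$ a zero of order $\alpha_a-\tfrac12$, and the $\rho$-th term of the $T_\pi$-expansion (via $\kappa_\pi^\rho$) a zero of order $c_{\rho^{-1}(a)}$; the net order is $\beta_a-\alpha_a-c_{\rho^{-1}(a)}$. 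For $\rho=\pi$ and $a=\pi(i)$ your hypothesis gives $\beta_a-\alpha_a-c_{\pi^{-1}(a)}\ge 0$, i.e.\ the integrand is \emph{allowed} to have a pole at $s^{-1}$ — it is not regular there. In fact, the role of $\alpha_{\pi(i)}+c_i\le\beta_{\pi(i)}$ is on the left-hand side: it is exactly the condition $t\ge m+k$ needed for the delayed partition function $Z_{(c_i)}^{(\alpha_{\pi(i)},\beta_{\pi(i)})}$ to be defined at all.

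The missing ingredient is the simple observation the paper makes: the q-Hahn integrand decays like $w_a^{-2}$ as $w_a\to\infty$, so there is \emph{no residue at infinity}. This is what lets one deform the contours $\Gamma[a|s^{-1}]$ (around $0$ and $s^{-1}$) through infinity, replacing them by contours around the remaining pole $s$ only, \emph{before} taking $\epsilon\to 0$; then the affine substitution $w_a=1+\epsilon\tilde w_a$ maps $s\approx 1-\epsilon\sigma/2$ to $-\sigma/2$, $s^{-1}\approx 1+\epsilon\sigma/2$ to $\sigma/2$, and $w_a=q^{-1}w_b$ to $\tilde w_a=\tilde w_b+1$, producing exactly the nested contours $\tilde\Gamma[a]$. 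Once the contours are fixed and independent of $\epsilon$, there is no uniformity issue to struggle with — only "straightforward computation of limits of factors". Your sketch, by contrast, tries to decompose the original contours and discard pieces via a regularity claim that does not hold, so as written the argument fails at this step.
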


\begin{proof}
We make a limit transition prescribed by Proposition \ref{prop:6v-polymer} in the integral formula from Theorem \ref{th:qHahnFormulae}. In the left-hand side of the formula as $\epsilon \to 0$ we have asymptotic equivalence
$$
q^{h^*_*} \approx \exp \left( - \epsilon \frac{ - \log Z_*^*}{\epsilon} \right) = Z_*^*.
$$
In the right-hand side, we first notice that the integrand does not have a residue at infinity. Thus, we can move contours through infinity and have them nested around $s$ only. Then we make a change of variables $w_i = 1+ \epsilon \tilde w_i$, $i=1,2, \dots, k$, and do
the limit transition in parameters as in Proposition \ref{prop:6v-polymer} (using the same $\epsilon$). Note that
if $f(w_1, w_2, \dots, w_k)$ converges to $\tilde f(\tilde w_1, \tilde w_2, \dots, \tilde w_k)$ in the limit $\epsilon \to 0$, then we have the convergence
$$
T_{\pi} f(w_1, w_2, \dots, w_k)  \to \tilde T_{\pi} \tilde f(\tilde w_1, \tilde w_2, \dots, \tilde w_k), \qquad \mbox{as $\epsilon \to 0$.}
$$
The rest is straightforward computations of limits of factors entering the integrand.
\end{proof}


\begin{thebibliography}{9999999}
\bibitem[AAV08]{AAV08} G.~Amir, O.~Angel, B.~Valko, \textit{The TASEP speed process}, Ann. Probab.
 39 (2011), 1205--1242, {\tt arXiv:0811.3706}.
 \bibitem[BC15]{BC17} G. Barraquand, I. Corwin, \emph{Random walk in Beta-distributed random environment}, Probability Theory and Related Fields, Volume 167, Issue 3, pp 1057-1116, {\tt arXiv:1503.04117}.
 \bibitem[BB19]{BB19} A.~Borodin, A.~Bufetov, \textit{Color-position symmetry in interacting particle systems}, preprint, {\tt arXiv:1905.04692}.
\bibitem[BCG14]{BCG16} A.~Borodin, I.~Corwin, V.~Gorin, \textit{Stochastic six-vertex model}, Duke Mathematical Journal, 165 (2016), 563--624, {\tt arXiv:1407.6729}.
\bibitem[BCPS13]{BCPS15b} A. Borodin, I. Corwin, L. Petrov, and T. Sasamoto. \emph{Spectral theory for the q-Boson particle system}, Compositio Mathematica, 151(1):1--67, 2015, {\tt arXiv:1308.3475}.
\bibitem[BCPS14]{BCPS15a} A. Borodin, I. Corwin, L. Petrov, and T. Sasamoto. \emph{Spectral theory for interacting particle systems solvable by coordinate Bethe ansatz}, Communications in Mathematical Physics, 339(3):1167--1245, 2015, {\tt arXiv:1407.8534}.
\bibitem[BG18]{BG18} A.~Borodin, V.~Gorin, \emph{A stochastic telegraph equation from the six-vertex model}, to appear in Ann. Probab., {\tt arXiv:1803.09137}.
\bibitem[BGW19]{BGW19} A. Borodin, V. Gorin, M. Wheeler. \emph{Shift-invariance for vertex models and polymers}, {\tt arXiv:1912.02957}.
\bibitem[BP16]{BP16} A. Borodin, L. Petrov. \emph{Higher spin six vertex model and symmetric rational functions}, Selecta Mathematica vol. 24, p. 751–874, 2018, {\tt arXiv:1601.05770}.
\bibitem[BW18]{BW18} A. Borodin, M. Wheeler. \emph{Coloured stochastic vertex models and their spectral theory}, {\tt arXiv:1808.01866}.
\bibitem[BW20]{BW20} A. Borodin, M. Wheeler. \emph{Observables of coloured stochastic vertex models and their polymer limits}, {\tt arXiv:2001.04913}.
\bibitem[BM16]{BM16}  G. Bosnjak, V. Mangazeev. \emph{Construction of $R$-matrices for symmetric tensor representations related to $U_q(\widehat{sl_n})$}, J. Phys. A: Math. Theor. 49, 2016, {\tt arXiv:1607.07968}.
\bibitem[Buf20]{Buf20} A. Bufetov, \emph{Interacting particle systems and random walks on Hecke algebras}, preprint, {\tt arXiv:2003.02730}.
\bibitem[BM17]{BM18} A. Bufetov, K. Matveev, \textit{Hall-Littlewood RSK field}, Selecta Mathematica 24 (2018), 4839--4884, {\tt arXiv:1705.07169}.
\bibitem[CP15]{CP16} I. Corwin and L. Petrov. \emph{Stochastic higher spin vertex models on the line}, Communications in Mathematical Physics, 343(2):651--700, 2016, {\tt arXiv:1502.07374}.
\bibitem[D20]{D20} D. Dauvergne, \emph{Hidden invariance of last passage percolation and directed polymers}, preprint, {\tt arXiv:2002.09459}.
\bibitem[Gal20]{Gal20} P. Galashin. \emph{Symmetries of stochastic colored vertex models}, preprint, {\tt arXiv:2003.06330}.
\bibitem[GS92]{GS92} L.-H. Gwa, H. Spohn, \textit{Six-vertex model, roughened surfaces, and an asymmetric spin Hamiltonian}, Physical review letters 68 (1992), 725--728.
\bibitem[Jim86]{Jim86} M. Jimbo. \emph{Quantum R matrix for the generalized Toda system}, Communications in Mathematical Physics, 102(4):537–547, 1986.
\bibitem[K20]{K20} J. Kuan, \emph{Coxeter Group Actions on Interacting Particle Systems}, preprint, {\tt arXiv:2003.03342}.
\bibitem[KMMO16]{KMMO16}A. Kuniba, V. Mangazeev, S. Maruyama, and M. Okado. \emph{Stochastic $R$ matrix for $U_q(\widehat{sl_n})$}, NuclearPhysics B, 913:248–277, 2016, {\tt arXiv:1604.08304}.
\bibitem[KR83]{KR83} P. Kulish and N. Reshetikhin. \emph{Quantum linear problem for the sine-Gordon equation and higher representations}. {N.Y. J Math Sci}, 23, 2435–2441, 1983.
\bibitem[KRS81]{KRS81} P. Kulish, N. Reshetikhin, and E. Sklyanin. \emph{Yang-Baxter equation and representation theory: I.} {Letters in Mathematical Physics 5.5}, 393–403, 1981.
\bibitem[TW07]{TW08a} C. Tracy and H. Widom. \emph{Integral formulas for the asymmetric simple exclusion process}, Communications in Mathematical Physics, 279(3):815--844, 2008, {\tt arXiv:0704.2633}.
\bibitem[TW08a]{TW08b} C. Tracy and H. Widom. \emph{A Fredholm determinant representation in ASEP}, Journal of Statistical Physics, 132(2):291--300, 2008, {\tt arXiv:0804.1379}.
\bibitem[TW08b]{TW09} C. Tracy and H. Widom. \emph{Asymptotics in ASEP with step initial condition}, Communications in Mathematical Physics, 290(1):129--154, 2009, {\tt arXiv:0807.1713}. 
\end{thebibliography}
\end{document}